\title[Recovering source terms from space-time samples]{Dynamical Sampling for the Recovery of Spatially Constant Source Terms in Dynamical Systems}
\author[Akram Aldroubi, Rocio Diaz Martin, Ivan Medri]{A. Aldroubi, R. Diaz Martin, I, Medri}
\date{\vspace{-5ex}}
\newtheorem{theorem}{Theorem}[section]
\newtheorem{proposition}[theorem]{Proposition}
\newtheorem{lemma}[theorem]{Lemma}
\newtheorem{corollary}[theorem]{Corollary}
\newtheorem{definition}[theorem]{Definition}
\newtheorem{remark}[theorem]{Remark}
\newtheorem{notation}[theorem]{Notation}
\newtheorem{example}{Example}
\newtheorem{problem}{Problem}
\newcommand{\R}{\mathbb{R}}
\newcommand{\N}{\mathbb{N}}
\newcommand{\bigzero}{\mbox{\normalfont\Large\bfseries 0}}
\newcommand{\rank}{{\rm rank\,}}
\def\N{\mathbb{N}}
\def\R{\mathbb{R}}
\def\C{\mathbb{C}}
\def\F{\mathbb{F}}
\def\I{\mathbb{I}}
\def\ker{\mathop{Ker}}
\newcommand{\ak}[1]{\textcolor{aacolor}{#1}}
\definecolor{aacolor}{rgb}{0.05, 0.75, 1}
\begin{document}
	\date{}
\address{\textrm{(Akram Aldroubi)}
Department of Mathematics,
Vanderbilt University,
Nashville, Tennessee 37240-0001 USA}
\email{aldroubi@math.vanderbilt.edu}

\address{\textrm{(Roc\'io Mart\'in D\'iaz)}
	Department of Mathematics,
	Vanderbilt University,
	Nashville, Tennessee 37240-0001 USA}
\email{rocio.p.diaz.martin@vanderbilt.edu}

\address{\textrm{(Ivan Medri)}
Department of Mathematics,
	Vanderbilt University,
	Nashville, Tennessee 37240-0001 USA}
\email{cocarojasjorgeluis@gmail.com}

\thanks{
Akram Aldroubi is supported in part by NSF Grant NSF/DMS-2208030.
}

\keywords{Dynamical Sampling, Observability, Applications of Linear Algebra}
\subjclass [2020] {93B07, 42C15, 15A99}

\maketitle
\begin{abstract}
In this paper, we investigate the problem of source recovery in a dynamical system utilizing space-time samples. This is a specific issue within the broader field of dynamical sampling, which involves collecting samples from solutions to a differential equation across both space and time with the aim of recovering critical data, such as initial values, the sources, the driving operator, or other relevant details. Our focus in this study is the recovery of unknown, stationary sources across both space and time, leveraging space-time samples. This research may have significant applications; for instance, it could provide a model for strategically placing devices to measure the quantity of pollutants emanating from factory smokestacks and dispersing across a specific area.
Space-time samples could be collected using measuring devices placed at various spatial locations and activated at different times. We present necessary and sufficient conditions for the positioning of these measuring devices to successfully resolve this dynamical sampling problem. This paper provides both a theoretical foundation for the recovery of sources in dynamical systems and potential practical applications. 	
	
\end{abstract}

\section {Introduction}    
   In various applications, such as environmental monitoring, it is crucial to accurately determine the location of pollution sources and assess the magnitude of their emissions. Typically, this information is obtained by deploying sensors across different locations. 
    Consequently, these scenarios can be framed as sampling and reconstruction problems.
    For instance, in the case of atmospheric emissions, considered in \cite{RDCV12} and the references therein, the goal is to recover the pollution intensity emitted by $K$ smokestacks by using $L$ sensors.

    In the present work, we examine the case where source emissions are constant in time, and atmospheric pollution evolves through a linear discrete-time dynamic. To illustrate, let us consider the following dynamical system:
    \begin{equation}\label{discreteGen}
        x({n+1}) = A^*x(n)+\underbrace{\sum\limits_{i=1}^K c_i \, e_{j_i}}_{\text{source term}},\qquad x(0)=x_0.
    \end{equation}
    Here, $x(n)$ represents the state of contamination at discrete time steps indexed by $n\in \N_0:=\N\cup{0}$. We assume that each state $x(n)$ is a vector in a finite-dimensional vector space, such as $\R^d$. The system initializes at an unknown vector $x_0$, and then the dynamic is governed by the linear operator $A^*$ and by the source term. For each $i=1,\dots,K$, the canonical vector $e_{j_i}$ represents the location of the $i$-th factory in an industrial zone, which releases plumes at a constant rate $c_i$.

    Regardless of the initial state $x_0$, the main objective is to estimate the scalar values $c_1,\dots,c_K$ using space-time samples of the form:
    \begin{equation}\label{eq: canonic samples}
    \langle x(n), e_{k_\ell}\rangle
    \end{equation}
    These samples are collected at specific locations $k_\ell$, $\ell=1,\dots,L$, where $\langle\cdot ,\cdot \rangle$ denotes the canonical inner product in $\R^d$. The new set of $L$ canonical vectors $e_{k_1},\dots,e_{k_L}$ can be viewed as the sensors in this context. (See Fig. \ref{fig: city})

    The problem above is one of many problems that fall under the umbrella of dynamical sampling problems. 
    For example, when all $c_i$ in system \eqref{discreteGen} are set to zero (i.e., $c_i=0$ for $i=1,\dots,K$), and assuming the operator $A^*$ is known, the objective is to determine the necessary and sufficient conditions on the spatial sampling vectors for recovering the initial condition $x_0$. This particular task is referred to as the space-time trade-off sampling problem (see, for e.g., \cite{APT15,ACCMP17, ACMT17,  ADK13, ADK15, AGHJKR21,  AP17,CMPP20, CJS15, DMM21, MMM21, UZ21, ZLL17}). If, on the other hand, the operator $A^*$ is unknown, it leads to the system identification problem presented in \cite{AHKLLV18, AK16, Tan17}. Lastly, in \cite{AGK23, AHKK23, AK16}, the focus is on identifying specific types of source terms that drive the dynamical system.  Dynamical sampling problems are connected to several areas of mathematics, including frame theory, control theory, functional analysis, and harmonic analysis \cite{AKh17, ACCP21,  BH23, BK23,CMPP20, CH19, CH23,  DMM21, FS19, GRUV15, KS19, MMM21, Men22, MT23, ZLL17}. These problems also have numerous applications in science and engineering \cite{ RBD21,AD20, MBD15, MD17}.
    
    Let us return to our sampling and source reconstruction problem in order to state it in a more general manner. Instead of using the measurements \eqref{eq: canonic samples} we can consider more general samples in the direction of vectors $b_\ell\in\R^d$, that is, 
    \begin{equation}\label{eq: measurements}
        y_\ell(n):=\langle x(n), b_{\ell}\rangle, \qquad \text{ for } \ell=1,\dots,L.
    \end{equation}
    Moreover, it is worth noting that we can treat the source term \eqref {discreteGen} as a vector in the subspace  
    \begin{equation*}\label{eq: source space}
        W:=span\{e_{j_1},\dots,e_{j_K}\} .
    \end{equation*}    Then, by expressing any vector in $W$ as a linear combination of $e_{j_1},\dots,e_{j_K}$, that is, 
    \begin{equation}\label{eq: source term}   
        \omega=\sum\limits_{i=1}^K c_i \, e_{j_i},
    \end{equation}
    we can observe that the problem of recovering $c_1,\dots,c_K$ is equivalent to that of recovering the vector $\omega$ in the subspace $W$. 
    
    Precisely, we state our general problem as follows.     
    \begin{problem}\label{prob: general}
        Let $\F$ be either $\R$ or $\C$, $A:\F^{d}\to\F^{d}$ be a linear operator with adjoint $A^*$, and  $W$ be  a subspace of $\F^d$. Given vectors $b_1,\dots, b_L\in\F^d$, consider the discrete-time dynamic 
        \begin{equation}\label{eq: dyn system subject to}
            \begin{cases}
            x({n+1}) = A^*x(n)+\omega,  \qquad 
            \text{subject to } \omega \in W\\
            y_\ell(n)=\langle x(n),b_\ell\rangle, \qquad \text{ for } \ell\in\{1,\dots, L\}, 
            \end{cases}
        \end{equation} 
        where the initial state $x(0)=x_0$ is unknown. 
        The \textit{source recovery problem} consists of providing necessary and sufficient conditions on such vectors $b_1,\dots,b_L$ so that the constant source $\omega$ can be recovered from a finite number of linear operations using a finite number of measurements $y_\ell(n)$. 
         
         For convenience and brevity, we adopt the following definitions.
    \end{problem}
     \begin {definition} ${}$\newline \label {OV}
     \begin {enumerate}
     \item The  set of vectors $\{b_1,\dots,b_L\}$ used in Problem \ref {prob: general} will be called \textbf{spatial observational vectors.}
     \item If the set of spatial observational vectors $\{b_1,\dots,b_L\}$ allows the reconstruction of  any $\omega \in W$, we say that the spatial observational vectors are {\textbf{complete}}  for this source recovery problem.
     \end {enumerate} 
        \end{definition}
  
{ The spatial observational vectors represent physical measuring devices used to acquire space-time samples. Specifically, if $b_j=e_{l_j}$, where $e_{l_j}$ is a standard basis element at spatial position $l_j$ for $\F^d$, then the set $\{b_1,\dots,b_L\}$ corresponds to measuring devices positioned at $l_j$, $1\le j\le l$. These devices can be activated at various time instances $n$ to collect space-time samples. }
      
    We will strive to find answers to the following questions:
    \begin{itemize}
        \item If the subspace $W$ and the sampling vectors $b_1,\dots,b_L$ are given in advance, is it possible to recover any $\omega \in W$?
        \item Given the subspace $W$, is there a minimal set $b_1,\dots,b_L$ such that reconstructing any $\omega \in W$ is possible?
        \item In case that the source reconstruction is possible, how many time iterations and spatial measurements are needed?
    \end{itemize}

\begin{figure}
     \begin{subfigure}[b]{0.45\textwidth}
         \centering
         \includegraphics[width=\textwidth]{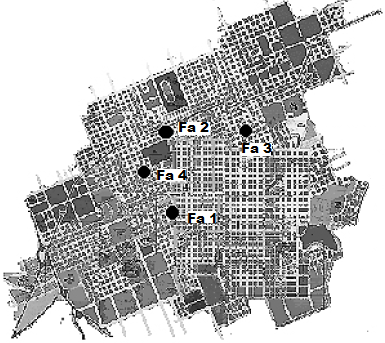}
     \end{subfigure}
     \hfill
     \begin{subfigure}[b]{0.48\textwidth}
        \centering
         \includegraphics[width=\textwidth]{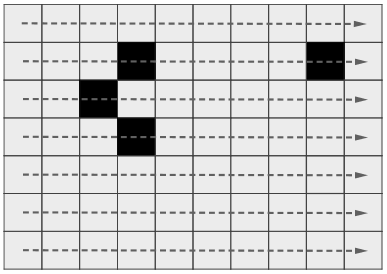}
     \end{subfigure}
\caption{Left Panel: Image credits \cite{chaparro}. The city of Tandil in Buenos Aires Province, Argentina is depicted on a map, featuring four factories labeled Fa 1 to Fa 4. In the article \cite{chaparro}, the authors investigated the magnetic properties of the soil's top layers, exploring the hypothesis of a correlation with the atmospheric fallout of pollutants generated by metallurgical factories. 
This pollution study can be modeled in our scenario by viewing the wind and precipitation evolution as the dynamical system, and the measurements taken in the surface layer as the spatial samples. Right panel: As a simple visualization, the city on the left panel is represented on the right panel as a $7\times 10$ grid where the pollution sources (factories) are shown as black pixels and the wind evolution is depicted as dotted vectors from right to left. Therefore, the city can be viewed as $\R^{70}$. Let $e_1\in\R^{70}$ represent the top left location in the grid, and then let draw the rest of the canonical vectors in $\R^{70}$ indicating locations from left to right and from top to bottom (so, for e.g., $e_{10}\in\R^{70}$ represents the top left location, $e_{61}$ the bottom left, and $e_{70}$ the top bottom right). Then,  
the pollution sources can be symbolized by the canonical vectors $e_{14}$, $e_{19}$, $e_{23}$, $e_{34}$. Finally, the wind and rain can be modelled as a nilpotent operator given by a $70\times 70$ matrix with $7$ blocks of size $10\times 10$.}
\label{fig: city}
\end{figure}

\subsection{Notation}

    Throughout this paper, $\F$ will be either the field of real numbers $\R$ or the field of complex numbers $\C$. Then, $\F^d$ is the natural $d$-dimensional vector space over $\F$, and $\F[x]$ is the ring of all polynomials with coefficients in $\F$. As usual, the canonical basis for $\F^d$ will be denoted by $\{e_1,\dots,e_d\}$ and, if needed, a vector $v\in\F^d$ is written in canonical coordinates reads as $v=(v^1,\dots,v^d)^T$. For a subspace $V$ of $\F^d$, its dimension will be $\mathrm{dim}(V)$, and $V^\perp$ will be its orthogonal complement. For a linear operator $A$, we denote by $A^*$ its adjoint operator and by $\sigma(A)$ its spectrum.  Also, $I$ stands for the identity matrix. We will write ${\bf 0}$ for the zero vector or for matrices with all entries equal to zero.  The set $\N_0$ denotes the set of non-negative integers, defined as $\N_0 := \N \cup \{0\}$.
   
   A recurring notation used throughout this paper is that of the minimal $A$-annihilating polynomial of $b$, defined as follows:
\begin{definition} \label{MAPB}
Given a linear operator $A:\F^d\to\F^d$.  The minimal monic polynomial that annihilates $A$ is denoted by $m_A$. Given a vector $b \in \F^d$, the minimal $A$-annihilating polynomial of $b$ is defined to be the unique monic polynomial $m_b \in \F[x]$ of least degree such that $m_b(A)b = \textbf{0}$ (see, for example, \cite{Hoffman}). Note that $m_b$ divides $m_A$.
\end{definition}

\subsection{Contributions and Organization}  


Section \ref{sec: main results} contains our main results. We first define the observational augmented orbits (see \eqref{eq: span of measurements}) that are fundamental for studying Problem \ref {prob: general}. Theorem \ref {th: prop_characterization_by_eq} characterizes the observational augmented orbits. In Subsection \ref{subsec: nec suf cond}, we determine necessary and sufficient conditions for the spatial observational vectors $b_1,\dots,b_L$ to be complete (see Definition \ref {OV} above). Our first result in this regard is presented in Theorem \ref{coro: nec suff cond}. This theorem enables us to derive more concrete necessary and sufficient conditions. For instance, when $1$ is not an eigenvalue of $A$, we present Theorem \ref{coro: nec and sufi no eigenval 1}, which offers easily verifiable necessary and sufficient conditions for the completeness of observational vectors $b_1,\dots, b_L$ (Sub-subsection \ref{subsubsec: not 1}). We then provide a general test for verifying the completeness of the observational vectors $b_1,\dots, b_L$ (Theorem \ref{th: general test} in Sub-subsection \ref{subsub: general}). We also establish that the number of observational vectors $L$ required for completeness must be at least equal to the dimension $\dim W$ of the source space $W$ (see Corollaries \ref{coro: at least K} and \ref{coro: at least K general}). This raises the question of when it is possible to reconstruct the source term using \textit{exactly} the same number of spatial observational vectors as the dimension of $W$. In Subsection \ref{Single Source Term}, we investigate this question when $\dim W=1$. Throughout this article, we include illustrative examples to enhance understanding, motivate certain ideas, and clarify specific issues. The proofs of the results are relegated to Section \ref{sec: proofs}.

 {   \section {main results}\label{sec: main results}
In this section, we aim to establish necessary and/or sufficient conditions for a set of spatial observational vectors
$\{b_1,\dots,b_L\}$ for the dynamic \eqref{eq: dyn system subject to} to be complete, as stated in Theorems \ref{coro: nec suff cond}, \ref {coro: nec and sufi no eigenval 1} and their corollaries. Before we proceed, we begin by rewriting the states and measurements of the dynamic in Problem \ref{prob: general}.
}

 {Denoting the initial state by $x(0)=x_0$, the evolution of the states ($x(n)=A^*x(n-1)+\omega$ in \ref{prob: general}) has the form
    \begin{equation}\label{eq: dyn evol}
        x(n) = (A^*)^n x_0 +\sum_{j=0}^{n-1}(A^*)^j\omega, \qquad \forall n\in \N.
    \end{equation}
}
{Using \eqref {eq: dyn evol}, the time-space measurements in \eqref{eq: dyn system subject to} can be written in the following form:
\begin{align}\label{data}
        y_{\ell}(n)&= \langle x_0, A^n b_{\ell}\rangle +\langle \omega, \Lambda_n b_{\ell}\rangle\\
        &=\left[{\begin{array} {rrr}
        A^nb_\ell \\
        \Lambda_nb_\ell
        \end{array} }\right]^*
        \left[{\begin{array} {rrr}
        x_0\\
        \omega
        \end{array} }\right] \qquad  \nonumber
    \end{align}
    for $n\in\N_0$ and $\ell\in {1,\dots,L}$, where  
    \begin{equation}\label{Lambda_powers of A}
        \Lambda_n:=\sum_{j=0}^{n-1}A^j \qquad \text{ for } n\in \N, \qquad  \Lambda_0:={\bf 0} \text{ (the zero-matrix).}
    \end{equation}
 }

{  The last equality in \eqref {data}, is written in matrix form, where a $d\times 2d$ matrix is multiplied by a vector in $\F^{2d}$ with the first $d$ components representing $x_0$ and the last $d$ components representing $\omega$.}

 {Equation \eqref {data} leads us to introduce two fundamental spaces associated with    $\{b_1,\dots,b_L\}$. }

{  The first fundamental space is the  \textit {observational orbits  space} $Z(A;b_1,\dots,b_L)$ defined by
\begin{equation}\label{eq: Z}
            Z(A;b_1,\dots,b_L):=span\{A^nb_\ell:\, n\in\N_0, \, \, \ell\in\{1,\dots,L\}\}.
  \end{equation}
  }

{ The second space we introduce is  referred to as the \textit{augmented observational orbits  space} defined by
 \begin{align}\label{eq: span of measurements}
        \mathcal{O}(b_1,\dots,b_\ell):=span &\left\{\left[ {\begin{array} {rrr}
        A^nb_\ell \\
        \Lambda_nb_\ell
        \end{array} }\right]\in\F^{2d}: \, n\in\N_0, \, \ell=1,\dots,L\right\},  
    \end{align}   
 where $\Lambda_n$ is as in \eqref {Lambda_powers of A}. 
  }

 The space $\mathcal{O}(b_1,\dots,b_L)$ is a subspace of $\F^{2d}$. Its dimension plays a crucial role in determining necessary and sufficient conditions in Theorem \ref{coro: nec suff cond} for the completeness of the spatial observational vectors ${b_1,\dots,b_L}$ (see Definition \ref {MAPB}). For the case of $L=1$, it can be shown that $\dim \mathcal{O}(b_1)\le d+1$, and that the dimension $d$ can be achieved by  an appropriately chosen vectors $b_1$. Intuitively, one might expect that for any $L\ge 2$, the dimension of $\mathcal{O}(b_1,\dots,b_L)$ could be as large as $2d$. However, the following theorem states that $\dim \mathcal{O}(b_1,\dots,b_L)\le \min \{d+L, 2d\}$.
 
   \begin{theorem}\label{th: prop_characterization_by_eq} 
        Let $A:\F^{d}\to\F^d$ and vectors $b_1,\dots,b_L\in \F^d\setminus\{{\bf 0}\}$.
        Then, the subspace $\mathcal{O}(b_1,\dots,b_L)\subset \F^{2d}$ of space-time sampling vectors defined in \eqref{eq: span of measurements} 
        coincides with  the set of vectors
        \begin{align}\label{eq: characterizing space}
            \left\{
            \left[ {\begin{array} {r}
            (A-I)g+\displaystyle\sum_{\ell=1}^L \mu_\ell\, b_\ell \\g\qquad \qquad \qquad \end{array} }\right]\in\F^{2d}: \, g\in Z(A;b_1,\dots,b_L), \, \mu_1,\dots,\mu_L\in\F \right\}.
            \end{align}
            In particular, $\dim \mathcal{O}(b_1,\dots,b_L)\le \min \{d+L, 2d\}$. 
    \end{theorem}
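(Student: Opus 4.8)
The plan is to prove the asserted set equality by a double inclusion and then read off the dimension bound from a natural surjection onto the characterizing set in \eqref{eq: characterizing space}. Everything rests on one algebraic identity linking the partial sums $\Lambda_n$ to the powers $A^n$. Since the sum telescopes,
\[
(A-I)\Lambda_n = \sum_{j=0}^{n-1}\bigl(A^{j+1}-A^j\bigr) = A^n - I,
\]
and this also holds for $n=0$, where both sides vanish (recall $\Lambda_0=\mathbf{0}$). Equivalently, $A^n = I + (A-I)\Lambda_n$, so that $A^n b_\ell = b_\ell + (A-I)\Lambda_n b_\ell$ for every $\ell$ and every $n\in\N_0$.

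For the inclusion of $\mathcal{O}(b_1,\dots,b_L)$ into the set \eqref{eq: characterizing space}, I would rewrite each generator using the identity above. Setting $g_{n,\ell}:=\Lambda_n b_\ell$, which lies in $Z(A;b_1,\dots,b_L)$ since it is a finite linear combination of the orbit vectors $A^j b_\ell$, the generator becomes
\[
\begin{bmatrix} A^n b_\ell \\ \Lambda_n b_\ell\end{bmatrix}
= \begin{bmatrix} (A-I)g_{n,\ell} + b_\ell \\ g_{n,\ell}\end{bmatrix}.
\]
A general linear combination $\sum_{n,\ell}\alpha_{n,\ell}$ of generators then has bottom block $g:=\sum_{n,\ell}\alpha_{n,\ell}\,g_{n,\ell}\in Z(A;b_1,\dots,b_L)$ and top block $(A-I)g+\sum_{\ell}\mu_\ell b_\ell$ with $\mu_\ell:=\sum_n\alpha_{n,\ell}\in\F$, which is exactly the required form.

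For the reverse inclusion I must exhibit, for every $g\in Z(A;b_1,\dots,b_L)$ and every choice of $\mu_1,\dots,\mu_L\in\F$, the corresponding vector of \eqref{eq: characterizing space} inside $\mathcal{O}(b_1,\dots,b_L)$. I would split it as a sum of a vector with top block $(A-I)g$ and bottom block $g$, plus a vector with top block $\sum_\ell\mu_\ell b_\ell$ and bottom block $\mathbf{0}$. The second piece is immediate: the $n=0$ generator is $\bigl[\,b_\ell\,;\,\mathbf{0}\,\bigr]$ because $\Lambda_0=\mathbf{0}$, so any combination $\sum_\ell\mu_\ell b_\ell$ appears as a top block over a zero bottom block. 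For the first piece, the crucial observation is the telescoping of consecutive generators: because $\Lambda_{n+1}-\Lambda_n=A^n$,
\[
\begin{bmatrix} A^{n+1}b_\ell \\ \Lambda_{n+1}b_\ell\end{bmatrix}
-\begin{bmatrix} A^{n}b_\ell \\ \Lambda_{n}b_\ell\end{bmatrix}
= \begin{bmatrix} (A-I)A^{n}b_\ell \\ A^{n}b_\ell\end{bmatrix}\in\mathcal{O}(b_1,\dots,b_L).
\]
Thus the vector with top block $(A-I)v$ and bottom block $v$ lies in $\mathcal{O}(b_1,\dots,b_L)$ for every spanning vector $v=A^n b_\ell$ of $Z(A;b_1,\dots,b_L)$, and by linearity of $v\mapsto\bigl[(A-I)v\,;\,v\bigr]$ together with the fact that $\mathcal{O}(b_1,\dots,b_L)$ is a subspace, the same holds for every $g\in Z(A;b_1,\dots,b_L)$. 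Summing the two pieces completes the reverse inclusion.

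Finally, the dimension bound follows by regarding \eqref{eq: characterizing space} as the image of the linear map $T\colon Z(A;b_1,\dots,b_L)\times\F^{L}\to\F^{2d}$ sending $(g,\mu_1,\dots,\mu_L)$ to the displayed vector. Having shown this image equals $\mathcal{O}(b_1,\dots,b_L)$, its dimension is at most that of the domain, so $\dim\mathcal{O}(b_1,\dots,b_L)\le \dim Z(A;b_1,\dots,b_L)+L\le d+L$ (using $Z(A;b_1,\dots,b_L)\subseteq\F^d$); combined with the trivial bound $\dim\mathcal{O}(b_1,\dots,b_L)\le 2d$ from $\mathcal{O}(b_1,\dots,b_L)\subseteq\F^{2d}$, this gives $\min\{d+L,2d\}$. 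I expect the only genuine obstacle to be spotting the telescoping identity $(A-I)\Lambda_n=A^n-I$; once it is in hand, both inclusions and the dimension estimate reduce to routine linear algebra.
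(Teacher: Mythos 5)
Your proof is correct, and although it rests on the same key identity $(A-I)\Lambda_n=A^n-I$ and the same overall structure (double inclusion, then a dimension count via the parametrization of \eqref{eq: characterizing space} by $Z(A;b_1,\dots,b_L)\times\F^L$), your reverse inclusion runs on a genuinely different engine than the paper's. The paper first asserts the span equality \eqref{Lemma_frame_frame}, $\mathrm{span}\{\Lambda_1b,\dots,\Lambda_Nb\}=\mathrm{span}\{b,Ab,\dots,A^{N-1}b\}$ (left as ``not difficult to show''), uses its nontrivial direction to re-expand a given $g\in Z(A;b)$ as $\sum_{n\ge1}a_n\Lambda_nb$, and then tunes a free coefficient $a_0:=\mu-\sum_{n\ge1}a_n$ so that $\sum_{n\ge0}a_n\,(A^nb,\Lambda_nb)^T$ has top block $(A-I)g+\mu b$; the case $L>1$ is dispatched with ``the same arguments.'' You avoid that auxiliary lemma entirely: the telescoping difference of consecutive generators gives $\bigl((A-I)A^nb_\ell,\,A^nb_\ell\bigr)^T\in\mathcal{O}(b_1,\dots,b_L)$ directly, linearity of $v\mapsto\bigl((A-I)v,\,v\bigr)^T$ extends this to every $g\in Z(A;b_1,\dots,b_L)$, and the $n=0$ generators $(b_\ell,\mathbf{0})^T$ supply the term $\sum_\ell\mu_\ell b_\ell$. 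What your route buys: the only input is the trivial containment $\Lambda_nb_\ell\in Z(A;b_1,\dots,b_L)$ (rather than the unproved span equality), and the argument is written uniformly in $L$ with no reduction to the single-vector case. What the paper's route buys: explicit scalars $a_n$ expressing a target vector of \eqref{eq: characterizing space} as a combination of the generators, the computational form echoed later in Lemma \ref{lemma aux}---though your telescoping steps can be unwound to produce such coefficients as well. The dimension bounds in the two proofs are essentially identical.
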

  
  \begin {remark} \label {KS} {When there is a single observational vector $b_1$ (i.e., $L=1$), the space of orbits $Z(A;b_1)$ is referred to as the Krylov subspace of the highest order generated by $A$ and $b_1$. It is well-known that the dimension of $Z(A;b_1)$ is equal to the degree $r$ of the minimal $A$-annihilating polynomial $m_{b_1}(x)$ of $b_1$. In fact, if $r$ denotes the degree of $m_{b_1}$, then ${b_1,Ab_1,\dots, A^{r-1}b_1}$ forms a basis for $Z(A;b_1)$.}
\end{remark}

  {
 \begin{corollary}\label{rank_d+1} 
        Let $d\ge 2$  and $A:\F^{d}\to\F^d$ be a linear operator. Consider a non-zero vector $b\in \F^d$. Then, the dimension of the space $\mathcal{O}(b)$ is equal to $r+1$, where $r$ is the degree of the minimal $A$-annihilating polynomial of $b$.
    \end{corollary}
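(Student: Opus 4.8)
The plan is to read the dimension directly off the explicit description of $\mathcal{O}(b)$ supplied by Theorem~\ref{th: prop_characterization_by_eq} specialized to $L=1$. In that case the theorem identifies $\mathcal{O}(b)$ with
\[
\left\{ \left[ {\begin{array}{r} (A-I)g+\mu\, b \\ g \end{array}} \right] \in \F^{2d} : \ g\in Z(A;b),\ \mu\in\F \right\},
\]
so the first step is to view this set as the range of the linear map $T\colon Z(A;b)\oplus\F\to\F^{2d}$ given by $T(g,\mu)=\left[{\begin{array}{r}(A-I)g+\mu\,b\\ g\end{array}}\right]$. Since for a single observational vector $Z(A;b)$ is the Krylov subspace generated by $A$ and $b$, Remark~\ref{KS} gives $\dim Z(A;b)=r$, whence the domain of $T$ has dimension $r+1$.

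Then rank--nullity yields $\dim\mathcal{O}(b)=\dim\operatorname{range}(T)=(r+1)-\dim\ker T$, and it remains only to check that $T$ is injective. This is the sole computation needed: if $T(g,\mu)=\mathbf{0}$, then the bottom $d$ coordinates force $g=\mathbf{0}$, and substituting into the top block leaves $\mu\,b=\mathbf{0}$; as $b\neq\mathbf{0}$ by hypothesis, $\mu=0$. Hence $\ker T=\{\mathbf{0}\}$ and $\dim\mathcal{O}(b)=r+1$, as claimed.

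I expect no real obstacle, since the block (direct-sum) structure of $T$ does all the work: the second component of $T(g,\mu)$ depends on $g$ alone, so the bottom block pins down $g$ and the top block then isolates $\mu\,b$. The only point worth flagging is the a~priori worry that the extra parameter $\mu$ might fail to contribute a fresh dimension---for instance if $\mu\,b$ were already forced into the range of $g\mapsto(A-I)g$. The injectivity argument shows this never occurs, and in particular the count $r+1$ is exact regardless of whether $1\in\sigma(A)$ or of how $A-I$ acts on $Z(A;b)$.
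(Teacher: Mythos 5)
Your proof is correct and is essentially the paper's argument: both reduce, via Theorem \ref{th: prop_characterization_by_eq} and Remark \ref{KS}, to the observation that the parametrization $(g,\mu)\mapsto \bigl((A-I)g+\mu b,\, g\bigr)$ of $\mathcal{O}(b)$ by the $(r+1)$-dimensional space $Z(A;b)\oplus\F$ is injective, because the bottom block pins down $g$ and then $b\neq\mathbf{0}$ forces $\mu=0$. The paper organizes this in two steps (linear independence of the vectors $F(A^jb)$, $0\le j\le r-1$, where $F=\left[\begin{smallmatrix}A-I\\ I\end{smallmatrix}\right]$, followed by independence of the extra vector with top block $b$ and zero bottom block), whereas you package it as a single rank--nullity count, but the content is identical.
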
 }
    
   \subsection {Necessary and Sufficient conditions for completeness of observational vectors} \label{subsec: nec suf cond}
   
   {Our first result provides the necessary and sufficient conditions for the set ${b_1,\dots,b_L}$ to be a complete observational set for Problem  \ref{prob: general}. However, its primary utility lies in deriving additional more useful necessary and/or sufficient conditions.}
   
    \begin{theorem}\label{coro: nec suff cond}
        Let $A:\F^d\to\F^d$ be a linear operator, and let $W$ be a subspace of $\F^d$ with $P_W:\F^d\to\F^d$ the orthogonal projection onto $W$. A set of spatial  {observational vectors}  $\{b_1,\dots,b_L\}\subset \F^d$ {is complete} for Problem \ref{prob: general} (see Definition \ref {OV}) if and only if there exist vectors $g_1,\dots,g_N\in Z(A;b_1,\dots, b_L)$, for some $N\in\N$, such that 
        \begin{enumerate}
            \item  \label{item: 2nd cond} 
            there exists a solution $M\in\F^{L\times N}$ for the matrix problem
            \begin{equation}\label{eq: nec cond 2}
                B M=(A-I) G
            \end{equation}
            where $B$ is the $d\times L$ matrix having $b_1,\dots,b_L$ as columns, and $G$ is the $d\times N$ matrix with columns $g_1,\dots,g_N$, and 
            \item \label{item: 1st cond} $\{P_W(g_1),\dots,P_W(g_N)\}$ spans $W$. 
        \end{enumerate}
    \end{theorem}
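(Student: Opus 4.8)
The plan is to first recast the informal notion of completeness as a precise linear-algebraic separation condition, and then to match that condition with items (1)--(2) using the description of $\mathcal{O}(b_1,\dots,b_L)$ provided by Theorem~\ref{th: prop_characterization_by_eq}.

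First I would note, from \eqref{data}, that each sample $y_\ell(n)$ is the value of the linear functional $\begin{bmatrix}x_0\\\omega\end{bmatrix}\mapsto\langle\begin{bmatrix}x_0\\\omega\end{bmatrix},\xi\rangle$ evaluated at $\xi=\begin{bmatrix}A^nb_\ell\\\Lambda_nb_\ell\end{bmatrix}$; hence the full collection of samples determines exactly the orthogonal projection of the unknown state $\begin{bmatrix}x_0\\\omega\end{bmatrix}$ onto $\mathcal{O}(b_1,\dots,b_L)$, and any finite linear operation on the samples corresponds to pairing the state against a vector of $\mathcal{O}(b_1,\dots,b_L)$. Because the admissible states fill the subspace $V:=\F^d\times W$ (with $x_0$ arbitrary and $\omega\in W$), a kernel-inclusion argument for the linear maps ``state $\mapsto$ samples'' and ``state $\mapsto\omega$'' shows that $\omega$ is recoverable for every admissible state by finitely many linear operations if and only if the samples separate the $\omega$-component, i.e.
$$\begin{bmatrix}u\\v\end{bmatrix}\in\mathcal{O}(b_1,\dots,b_L)^\perp\ \text{ with }\ v\in W \quad\Longrightarrow\quad v={\bf 0}.$$
I take this as the working restatement of completeness.

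For sufficiency, assume $g_1,\dots,g_N\in Z(A;b_1,\dots,b_L)$ and $M\in\F^{L\times N}$ satisfy (1) and (2), and write $m_j$ for the $j$-th column of $M$, so that $(A-I)g_j=\sum_{\ell=1}^L m_{\ell j}b_\ell$. Applying Theorem~\ref{th: prop_characterization_by_eq} with $g=g_j$ and $\mu_\ell=-m_{\ell j}$ shows that $\begin{bmatrix}{\bf 0}\\ g_j\end{bmatrix}\in\mathcal{O}(b_1,\dots,b_L)$. Therefore a suitable finite linear combination of the samples returns $\langle\omega,g_j\rangle$, independently of $x_0$; and since $\omega\in W$ we have $\langle\omega,g_j\rangle=\langle\omega,P_Wg_j\rangle$. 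As $\{P_Wg_1,\dots,P_Wg_N\}$ spans $W$ by (2), these numbers determine $\omega\in W$, which proves completeness.

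For necessity, set $\mathcal{B}:=\spn\{b_1,\dots,b_L\}$ and $S:=\{g\in Z(A;b_1,\dots,b_L):(A-I)g\in\mathcal{B}\}$. Using Theorem~\ref{th: prop_characterization_by_eq} I would first show that $\begin{bmatrix}u\\v\end{bmatrix}\in\mathcal{O}(b_1,\dots,b_L)^\perp$ exactly when $u\in\mathcal{B}^\perp$ and $(A-I)^*u+v\in Z(A;b_1,\dots,b_L)^\perp$. The crux of the argument --- and the step I expect to be the main obstacle --- is then the orthogonal-complement identity
$$S^\perp=Z(A;b_1,\dots,b_L)^\perp+(A-I)^*\bigl(\mathcal{B}^\perp\bigr),$$
which I would obtain from the finite-dimensional rules $(U\cap V)^\perp=U^\perp+V^\perp$ and $\{g:(A-I)g\in\mathcal{B}\}=\bigl((A-I)^*\mathcal{B}^\perp\bigr)^\perp$, taking care with the adjoint bookkeeping. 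Granting this, the set of all $v\in W$ admitting some $u$ with $\begin{bmatrix}u\\v\end{bmatrix}\in\mathcal{O}(b_1,\dots,b_L)^\perp$ equals $W\cap S^\perp$, so the separation restatement of completeness is precisely $W\cap S^\perp=\{{\bf 0}\}$, equivalently $P_W(S)=W$. Finally, choosing $g_1,\dots,g_N\in S$ whose projections $P_Wg_j$ span $W$ and expanding each $(A-I)g_j\in\mathcal{B}$ in the columns of $B$ to assemble $M$ produces data satisfying (1) and (2), completing the equivalence.
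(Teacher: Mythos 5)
Your proof is correct, but the necessity half follows a genuinely different route from the paper's. The sufficiency half is essentially the paper's argument: you use Theorem~\ref{th: prop_characterization_by_eq} to place $\begin{bmatrix}{\bf 0}\\ g_j\end{bmatrix}$ in $\mathcal{O}(b_1,\dots,b_L)$ and recover $\langle \omega, P_W g_j\rangle$, exactly as the paper does via Lemma~\ref{lemma aux}. For necessity, however, the paper argues constructively: it encodes the assumed linear recovery as a matrix identity $\omega = U^*\bigl[F^* \; G^*\bigr]\begin{bmatrix}x_0\\ \omega\end{bmatrix}$, then specializes $x_0={\bf 0}$ to get $P_W GU=P_W$ and uses arbitrariness of $x_0$ to force $(FU)^*={\bf 0}$, taking the columns of $GU$ as the $g_j$. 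You instead argue by duality: you recast completeness as the separation condition $\begin{bmatrix}u\\ v\end{bmatrix}\in\mathcal{O}(b_1,\dots,b_L)^\perp,\ v\in W \Rightarrow v={\bf 0}$, compute $\mathcal{O}(b_1,\dots,b_L)^\perp$ explicitly, and reduce everything to the identity $S^\perp=Z(A;b_1,\dots,b_L)^\perp+(A-I)^*(\mathcal{B}^\perp)$ for $S=\{g\in Z(A;b_1,\dots,b_L):(A-I)g\in\mathcal{B}\}$; all of these orthogonal-complement computations check out in finite dimensions. Notably, your $S$ is precisely the $A$-$\{b_1,\dots,b_L\}$ characteristic space $\mathcal{G}$ of Definition~\ref{def: general admissible vectors}, and your intermediate conclusion ``complete $\iff P_W(S)=W$'' is an abstract form of Theorem~\ref{th: general test}, which the paper only reaches later through the explicit polynomial construction of Theorem~\ref{th: characterization of all g}. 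So your route buys a structural shortcut that unifies Theorem~\ref{coro: nec suff cond} with the later completeness test, while the paper's route stays closer to the operational content: it exhibits the actual linear reconstruction (frame coefficients and the operator $U$), which is what makes the result algorithmically usable.
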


\subsubsection{Completeness of observational vectors when $1\notin \sigma(A)$.}\label{subsubsec: not 1}

We now use Theorem \ref{coro: nec suff cond} to derive simple, necessary and sufficient conditions, enabling us to directly determine whether a set of observational vectors is complete.

     \begin{theorem}\label{coro: nec and sufi no eigenval 1}        Let $A:\F^{d}\to\F^d$ be a linear operator with spectrum $\sigma(A)$, and let $W$ be a $K$-dimensional subspace of $\F^d$. Furthermore, assume that $1\notin \sigma(A)$,  then, the set of spatial observational vectors $\{b_1,\dots,b_L\}\subset \F^d\setminus\{{\bf 0}\}$ is complete for Problem \ref{prob: general} if and only if
        \begin{equation}\label{eq: suf cond no eig 1}
            rank \left(P_W(A-I)^{-1} B\right) = K
        \end{equation}
        where $P_W$ denotes the orthogonal projection onto $W$, and $B$ is the matrix having $b_1,\dots,b_L$ as column vectors.
    \end{theorem}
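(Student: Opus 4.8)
The plan is to specialize the general characterization in Theorem \ref{coro: nec suff cond} to the situation where $A-I$ is invertible, which holds precisely because $1\notin\sigma(A)$. Write $B$ for the $d\times L$ matrix with columns $b_1,\dots,b_L$, whose range is $\spn\{b_1,\dots,b_L\}$. The first step is to reformulate condition \eqref{item: 2nd cond} of Theorem \ref{coro: nec suff cond}. Reading $BM=(A-I)G$ column by column shows that a matrix $M\in\F^{L\times N}$ solving it exists if and only if each column $g_i$ of $G$ satisfies $(A-I)g_i\in\spn\{b_1,\dots,b_L\}$; since $A-I$ is invertible, this is equivalent to $g_i\in(A-I)^{-1}\spn\{b_1,\dots,b_L\}$ for every $i$.

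The second, and key, step is to check that the membership requirement $g_i\in Z(A;b_1,\dots,b_L)$ imposed by Theorem \ref{coro: nec suff cond} is automatically satisfied once $g_i\in(A-I)^{-1}\spn\{b_1,\dots,b_L\}$. I would prove the inclusion $(A-I)^{-1}\spn\{b_1,\dots,b_L\}\subseteq Z(A;b_1,\dots,b_L)$ as follows. The space $Z:=Z(A;b_1,\dots,b_L)$ is $A$-invariant and contains each $b_\ell$, hence it is invariant under $A-I$; the restriction $(A-I)|_Z$ is injective because $A-I$ is injective on all of $\F^d$, and therefore bijective on the finite-dimensional space $Z$. Consequently $(A-I)^{-1}$ maps $Z$ onto itself, and since each $b_\ell\in Z$ we obtain $(A-I)^{-1}b_\ell\in Z$, giving the inclusion. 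Thus condition \eqref{item: 2nd cond} together with the constraint $g_i\in Z$ collapses to the single requirement $g_i\in(A-I)^{-1}\spn\{b_1,\dots,b_L\}$, which is the range of the matrix $(A-I)^{-1}B$.

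Finally, I would combine this with condition \eqref{item: 1st cond}. Completeness now amounts to the existence of vectors $g_1,\dots,g_N$ in the range of $(A-I)^{-1}B$ whose images $P_W(g_i)$ span $W$; equivalently, $P_W$ applied to the range of $(A-I)^{-1}B$ equals $W$. Since $P_W\big(\mathrm{range}\,(A-I)^{-1}B\big)$ is exactly the range of $P_W(A-I)^{-1}B$, and this range is a subspace of $W$, it coincides with $W$ if and only if its dimension equals $\dim W=K$, that is $\rank\big(P_W(A-I)^{-1}B\big)=K$. Because the argument is an equivalence throughout, both directions of the theorem follow at once. I expect the main obstacle to be the key step of the second paragraph: verifying that the vectors $g_i$ demanded by Theorem \ref{coro: nec suff cond} may be taken inside the orbit space $Z$ while still solving $(A-I)g_i\in\spn\{b_1,\dots,b_L\}$; the rest is bookkeeping with ranges, projections, and ranks.
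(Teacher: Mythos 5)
Your proof is correct, and its overall route is the same as the paper's: specialize Theorem \ref{coro: nec suff cond} using the invertibility of $A-I$. The genuine difference lies in how the key technical point is handled, namely why the vectors $g_i$ solving $(A-I)g_i\in\spn\{b_1,\dots,b_L\}$ can be taken inside the orbit space $Z(A;b_1,\dots,b_L)$. The paper establishes this through Lemma \ref{lem: inv I-A in orbit}, which rests on the Hermite interpolation result of Theorem \ref{th: interpol th} in the Appendix (applying the functional calculus to $f(x)=1/(x-1)$ on the roots of $m_b$), and then packages it as Proposition \ref{remark: g when A not eigenval 1}; it also splits the statement into a sufficiency argument (take $G=(A-I)^{-1}B$, $M=I$) and a necessity argument modeled on Corollary \ref{coro: at least K}. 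Your argument replaces the interpolation machinery by a purely linear-algebraic observation: $Z(A;b_1,\dots,b_L)$ is a finite-dimensional $A$-invariant subspace containing the $b_\ell$, so $(A-I)$ restricts to an injective, hence bijective, map of it onto itself, giving $(A-I)^{-1}\spn\{b_1,\dots,b_L\}\subseteq Z(A;b_1,\dots,b_L)$ at once; moreover you run the whole proof as a single chain of equivalences, so both directions come out together. What your approach buys is elementarity and self-containedness (no appeal to the Appendix). What the paper's lemma buys is extra generality that is not needed for this theorem but is reused elsewhere: Lemma \ref{lem: inv I-A in orbit} gives $(A-I)^{-1}b\in Z(A;b)$ under the weaker, local hypothesis $m_b(1)\neq 0$, which can hold even when $1\in\sigma(A)$.
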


As a corollary, we conclude that the set of observational vectors must contain at least $K$ vectors when $\dim W=K$. 
   \begin{corollary}\label{coro: at least K}
        Let $A:\F^d\to\F^d$ be a linear operator with $1\notin \sigma(A)$, and let $W$ be a $K$-dimensional subspace of $\F^d$. If the set $\{b_1,\dots,b_L\}$ forms a complete observational set for Problem \ref{prob: general}, then $L\ge K$.
    \end{corollary}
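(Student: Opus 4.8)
The plan is to invoke Theorem \ref{coro: nec and sufi no eigenval 1} directly, since the hypothesis $1\notin\sigma(A)$ is exactly the standing assumption of that theorem. By Theorem \ref{coro: nec and sufi no eigenval 1}, the completeness of $\{b_1,\dots,b_L\}$ for Problem \ref{prob: general} is equivalent to the rank condition $\rank\!\left(P_W(A-I)^{-1}B\right)=K$, where $B$ is the $d\times L$ matrix whose columns are $b_1,\dots,b_L$. Note that $(A-I)^{-1}$ is well-defined precisely because $1\notin\sigma(A)$, so this rank expression makes sense.

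Next I would observe that $P_W(A-I)^{-1}B$ is the product of the $d\times d$ matrix $P_W(A-I)^{-1}$ with the $d\times L$ matrix $B$, hence it is itself a $d\times L$ matrix. Its column space is spanned by the $L$ vectors $P_W(A-I)^{-1}b_\ell$, $\ell=1,\dots,L$, so its rank cannot exceed $L$. Combining this elementary bound with the equality supplied by the theorem yields $K=\rank\!\left(P_W(A-I)^{-1}B\right)\le L$, which is exactly the asserted inequality $L\ge K$.

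I do not anticipate any real obstacle: the entire substantive content is already packaged inside Theorem \ref{coro: nec and sufi no eigenval 1}, and what remains is the standard fact that the rank of a matrix is at most its number of columns. The only point requiring a word of care is the invertibility of $A-I$, which is immediate from $1\notin\sigma(A)$.
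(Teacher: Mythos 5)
Your proof is correct as a deduction from the stated results: Theorem \ref{coro: nec and sufi no eigenval 1} makes completeness equivalent to $\rank\left(P_W(A-I)^{-1}B\right)=K$, and since that matrix has $L$ columns its rank is at most $L$, giving $K\le L$. However, this is a genuinely different route from the paper's. The paper never invokes Theorem \ref{coro: nec and sufi no eigenval 1} here; it goes back to the general characterization of Theorem \ref{coro: nec suff cond}: completeness produces vectors $g_1,\dots,g_N$ and a matrix $M$ with $BM=(A-I)G$ and $\{P_W(g_1),\dots,P_W(g_N)\}$ spanning $W$, whence $\rank(P_W G)=K$ forces $\rank(G)\ge K$, and writing $G=(A-I)^{-1}BM$ gives $\rank(B)\ge K$ and so $L\ge K$. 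The reason for this choice is structural: in the paper, the necessity half of Theorem \ref{coro: nec and sufi no eigenval 1} --- precisely the direction you use --- is not proved independently but is declared to be ``similar to the proof of Corollary \ref{coro: at least K} below.'' If the corollary were then proved by citing the theorem, the paper's exposition would be circular. So your argument buys brevity (it needs only the trivial bound that rank is at most the number of columns), and it is logically valid given the theorem as a black box; but spliced into the paper it would require first supplying an independent proof of the necessity direction of Theorem \ref{coro: nec and sufi no eigenval 1}, which is essentially the paper's own rank argument via Theorem \ref{coro: nec suff cond}.
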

    
It turns out that, when $1\notin \sigma (A)$, a complete set of observational vectors $\{b_1,\dots,b_K\}$ can always be found, where $K=\dim W$.
\begin{corollary}
\label{coro: at least as many as dim W}
    Let $A:\F^d\to\F^d$ be a linear operator, and  let $W$ be a subspace of $\F^d$. If $\dim W=K$  and $1 \notin \sigma (A)$, then there exists a complete set of spatial observational vectors $\{b_1,\dots,b_K\}$. In particular, if ${\omega_1,\dots,\omega_K}$ form a basis for $W$,  a possible choice is given by $b_k=(I-A)\omega_{k}$ for $1\leq k\leq K$. 
\end{corollary}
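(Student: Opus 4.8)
The plan is to deduce this corollary directly from Theorem \ref{coro: nec and sufi no eigenval 1}, since the hypothesis $1\notin\sigma(A)$ is exactly the one under which that characterization applies. First I would note that $1\notin\sigma(A)$ means $A-I$ (equivalently $I-A$) is invertible, so the proposed vectors $b_k=(I-A)\omega_k$ are well-defined; moreover, since each basis vector $\omega_k$ is nonzero and $I-A$ is injective, each $b_k$ is nonzero, so the set $\{b_1,\dots,b_K\}$ meets the standing hypothesis $b_k\in\F^d\setminus\{{\bf 0}\}$ of Theorem \ref{coro: nec and sufi no eigenval 1}. This handles the existence assertion, since every subspace admits a basis.

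Next I would verify the rank condition \eqref{eq: suf cond no eig 1} for this choice. Assembling the $d\times K$ matrix $\Omega$ whose columns are $\omega_1,\dots,\omega_K$, the matrix $B$ with columns $b_1,\dots,b_K$ is simply $B=(I-A)\Omega$. The key computation is the cancellation
\begin{equation*}
P_W(A-I)^{-1}B = P_W(A-I)^{-1}(I-A)\Omega = -P_W\Omega,
\end{equation*}
using $(A-I)^{-1}(I-A)=-I$. Since every column $\omega_k$ of $\Omega$ lies in $W$ and $P_W$ fixes $W$ pointwise, we have $P_W\Omega=\Omega$, so $P_W(A-I)^{-1}B=-\Omega$.

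Finally, because $\omega_1,\dots,\omega_K$ form a basis of the $K$-dimensional space $W$, they are linearly independent, whence $\rank\!\big(P_W(A-I)^{-1}B\big)=\rank(-\Omega)=K$. By Theorem \ref{coro: nec and sufi no eigenval 1}, the set $\{b_1,\dots,b_K\}$ is complete for Problem \ref{prob: general}, which is the claim. I do not anticipate a genuine obstacle here: the argument is a one-line reduction once the cancellation $(A-I)^{-1}(I-A)=-I$ and the idempotency $P_W\omega_k=\omega_k$ are observed. The only points requiring a moment of care are confirming that the $b_k$ are nonzero so that the cited theorem is applicable, and tracking the (harmless) sign in the cancellation.
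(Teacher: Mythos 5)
Your proof is correct, and it takes a genuinely different route from the paper's. You verify the rank criterion \eqref{eq: suf cond no eig 1} of Theorem \ref{coro: nec and sufi no eigenval 1} via the cancellation $P_W(A-I)^{-1}B=P_W(A-I)^{-1}(I-A)\Omega=-P_W\Omega=-\Omega$, which has rank $K$; the nonvanishing check on the $b_k$ and the non-circularity of citing Theorem \ref{coro: nec and sufi no eigenval 1} (whose proof does not depend on this corollary) are both in order. The paper instead argues constructively, without invoking the rank test: it takes $\{\omega_\ell\}$ \emph{orthonormal}, uses the coefficients $\alpha_{n,\ell}$ of the minimal $A$-annihilating polynomial $m_{b_\ell}$ to form the combination $\sum_{n=0}^{r_\ell}\alpha_{n,\ell}\,y_\ell(n)$, which annihilates the unknown $x_0$ (since $m_{b_\ell}(A)b_\ell=\mathbf{0}$) and, via the identity \eqref{eq: trick}, equals $m_{b_\ell}(1)\langle\omega,\omega_\ell\rangle$ with $m_{b_\ell}(1)\neq 0$; summing $\langle\omega,\omega_\ell\rangle\,\omega_\ell$ then recovers $\omega$. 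Each approach buys something the other does not: the paper's proof yields an explicit reconstruction algorithm together with a bound $T=\max_\ell\{r_\ell+1\}$ on the number of time samples needed, whereas your argument is shorter and works verbatim for an \emph{arbitrary} basis $\{\omega_k\}$ of $W$ (matching the statement as written), while the paper's expansion $\omega=\sum_\ell\langle\omega,\omega_\ell\rangle\omega_\ell$ implicitly requires orthonormality, a hypothesis its proof adds but the statement does not.
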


\begin{remark}${ }$ 
 \begin{enumerate}
 \item The significance of Condition \eqref{eq: suf cond no eig 1} in Theorem \ref{coro: nec and sufi no eigenval 1} is that it provides a practical method to verify whether a given set of observational vectors is complete.
\item When the source aligns with \eqref{eq: source term}, the $P_W$ in \eqref{eq: suf cond no eig 1} simplifies to a diagonal matrix with diagonal entries from the set $\{0,1\}$.
\item The selection of the spatial observational vector in Corollary \ref{coro: at least as many as dim W} represents merely one potential choice for a complete set of observational vectors. However, when each vector $b_k$ is required to be a standard basis element of $\F^d$, we must choose a set of vectors ${e_{l_1},\dots,e_{l_K}}$ and validate their completeness using Theorem \ref{coro: nec and sufi no eigenval 1}.
 \end{enumerate}
 \end{remark}

  {Theorem \ref{coro: nec and sufi no eigenval 1} above provides a 
  simple test for determining whether a set of spatial observational vectors $\{b_1,\dots,b_L\}\subset \F^d\setminus\{{\bf 0}\}$ is complete for Problem \ref{prob: general} under the assumption that $1 \notin \sigma(A)$. Later, in  Theorem \ref{th: general test} we will provide a general test without the assumption that $1\not\in \sigma(A)$.  Allowing $1$ to be an eigenvalue of $A$  is more difficult to analyze.  In particular, if $1\in \sigma(A)$, then Corollary \ref{coro: at least as many as dim W} is false as the following example shows.} 
 \begin{example}\label{example_no_sol} In dimension $2$, consider the dynamical system 
\begin {equation} \label {dynamic for ex}
x({n+1}) = A^*x(n)+c_1 \, \omega_1  
\end{equation}  
where 
$$A:=\left[ {\begin{array} {rr}
 1  & 0 \\
 1 & 1  
 \end{array} }\right] \qquad \text{ and } \qquad \omega_1:=\left[ {\begin{array} {r}
 1 \\
 0  
 \end{array} }\right]. $$ 
The goal is to recover the unknown source intensity $c_1\in\F$. 

The eigenspace of $A$ corresponding to eigenvalue $1$ is spanned by the vector
$$\left[ {\begin{array} {r}
 0 \\
 1  
 \end{array} }\right]$$
and is orthogonal to $W=span \{\omega_1\}$. Suppose that there exists a vector 
$$b=\left[ {\begin{array} {r}
 b^1 \\
 b^2  
 \end{array} }\right]$$
such that, by sampling the states $x(n)$, we can recover $c_1$ in \eqref{dynamic for ex}. By denoting the initial state as 
$$x_0=\left[ {\begin{array} {r}
 x_0^1 \\
 x_0^2  
 \end{array} }\right],$$ 
 the measurements will be of the form:
\begin{equation*}
    \begin{cases}
    y(0) = x_0^1 \, b^1+ x_0^2 \, b^2\\
    y(1) = x_0^1\, b^1 + x_0^2 \, (b^2+b^1) + b^1 \, c_1\\
    y(2)=x_0^1\, b^1 + x_0^2 \, (b^2+2b^1) + 2b^1 \, c_1\\
    \vdots\\
    y(n)  = x_0^1\, b^1 + x_0^2 \, (b^2+nb^1) + nb^1 \, c_1\\
    \vdots
    \end{cases}
\end{equation*}
The equation above shows that to recover $c_1$ from the observations $y(j)$, then  necessarily $b^1\ne 0$. 
However, for any $c_1\in\F$, if $$x_0=c_1
\left[ {\begin{array} {r}
 b^2/b^1 \\
 -1  \, \, \, 
 \end{array} }\right]
$$ 
then $y(n)=0$ for all $n\in \N_0$. Therefore, for any choice of $b$, there exists $x_0$ such that $y_n=0$ for every $c_1$, and hence $c_1$ cannot be determined. Hence, if $1 \in \sigma (A)$ and $\mathrm{dim}(W) = 1$, having only one spatial sampling vector $b_1:=b$ might not be enough for solving Problem \ref{prob: general}.
\end{example}   

\subsubsection {Completeness of observational vectors: General case.}\label{subsub: general}

In order to state the general test for determining whether or not a set of spatial observational vectors is complete for Problem \ref{prob: general}, we need to introduce some definitions and notation. This general result is also based on Theorem \ref{coro: nec suff cond}.

\begin{notation} \label {HatNotation}
    Given $p\in\F[x]$, we denote by $\widehat{p}$ the polynomial
    \begin{equation}\label{eq: notation pol}
      \widehat{p}(x):=\frac{p(x)-p(1)}{x-1}  \in \F[x].
    \end{equation}
\end{notation}

\begin{definition} \label{AC}
    (See \cite{Hoffman}.)
    Given $A:\F^d\to\F^d$ a linear operator, a vector $b\in\F^d$, and an $A$-invariant subspace $V\subseteq \F^d$, a polynomial $\kappa\in\F[x]$ is said to bethe \textit{minimal $A$-conductor of $b$ into $V$} if it is the (unique) monic generator of the polynomial ideal
    $\mathbb{I}:=\{p\in\F[x]: \, p(A)b\in V \}.$
\end{definition}

Given $A$ a linear operator in $\F^d$ and  non-trivial vectors $b_1,\dots,b_L\in\F^d$, in what follows we will consider, for $j=1,\dots L,$ the minimal $A$-conductor polynomial $\kappa_j\in \F[x]$ of $b_j$ into the observational orbits  space 
$$V_j:=Z(A;b_1,\dots,b_{j-1})=\sum_{i=1}^{j-1}Z(A;b_i), $$ where $V_1:=\{\textbf{0}\}$. That is, $\kappa_j$ is the  generator of the following principal polynomial ideal 
    $$\mathbb{I}_j:=\{p\in\F[x]:\, p(A)b_j\in Z(A;b_1)+\dots+Z(A;b_{j-1})\},$$
    with the understanding that $\mathbb{I}_1:=\{p\in\F[x]: \, p(A)b_1=\textbf{0}\}$.
    For example, note that $\kappa_1$ is  the minimal $A$-annihilating polynomial $m_{b_1}$ of $b_1$. Moreover, if $b_2\in Z(A;b_1)$, then $\kappa_2(x)=1$ for all $x$. If instead $Z(A;b_1)\cap Z(A;b_2)=\{\textbf{0}\}$, then $\kappa_2=m_{b_2}$.

Theorem \ref{th: general test}, below,  extends Theorem \ref{coro: nec and sufi no eigenval 1} by eliminating the assumption that $1\notin \sigma(A)$. It provides a method for determining whether a set of spatial observational vectors ${b_1,\dots,b_L}\subset \F^d\setminus{{\bf 0}}$ is complete for Problem \ref{prob: general}. Before we delve into Theorem \ref{th: general test}, we need to introduce some constructs that are essential to its understanding.

Let $\{b_1,\dots,b_L\}\subset \F^d$ be a set of non-trivial spatial observational vectors.
    For each $1\leq j\leq L$, consider  the minimal $A$-conductor polynomial $\kappa_j\in \F[x]$ of $b_j$ into the observational orbits space  $V_j:=\sum_{i=1}^{j-1}Z(A;b_i)$ (where $V_1:=\{\textbf{0}\}$, and $\kappa_1:=m_{b_1}$). 
    For each $2\leq j\leq L$, let $q_j^i\in\F[x]$, for $1\leq i\leq j-1$, be such that 
    \begin{equation}\label{eq: q polynomials}
     \kappa_j(A)b_j=q_j^1(A)b_1+\dots+q_j^{j-1}(A)b_{j-1},  
    \end{equation}
    and define the vectors
    \begin{equation}\label{eq: characteristic vectors general case}
       g_j:=\widehat{\kappa_j}(A)b_j-\sum_{i=1}^{j-1}\widehat{q_j^i}(A)b_i, \qquad \text{ for } 1\leq j\leq L,
    \end{equation}
 where $\widehat{\kappa_j}$ and $\widehat{q_j^i}$ are defined using Notation \ref {HatNotation}. Using the constructed vectors above we now state  the theorem. 
 \begin{theorem}\label{th: general test}
    Let $A:\F^{d}\to\F^d$ be a linear operator, let $W$ be a subspace of $\F^d$ with $P_W:\F^d\to\F^d$ the orthogonal projection onto $W$, and let $\{b_1,\dots,b_L\}\subset \F^d$ be a set of non-trivial spatial observational vectors.
   Then, the set of spatial observational vectors $\{b_1,\dots,b_L\}$ is complete for Problem \ref{prob: general} if and only if $L\geq \mathrm{dim}(W)$ and
   \begin{equation} \label {GenCondComp}
      span\left\{ P_W(g_j): \, 1\leq j\leq L\right\}=W,
   \end{equation}
   where $g_j$ are as in \eqref {eq: characteristic vectors general case}.
 \end{theorem}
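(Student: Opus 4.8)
The plan is to reduce Theorem~\ref{th: general test} to the already-established Theorem~\ref{coro: nec suff cond} by exhibiting, for the particular vectors $g_1,\dots,g_L$ defined in \eqref{eq: characteristic vectors general case}, an explicit matrix solution $M$ to the equation $BM=(A-I)G$. Once such an $M$ is produced, condition \eqref{item: 2nd cond} of Theorem~\ref{coro: nec suff cond} holds automatically, and condition \eqref{item: 1st cond} becomes precisely the hypothesis \eqref{GenCondComp} that $span\{P_W(g_j):1\le j\le L\}=W$; the lower bound $L\ge\dim(W)$ is then forced since $L$ vectors cannot span a subspace of dimension exceeding $L$. So the crux is the algebraic identity $(A-I)g_j\in span\{b_1,\dots,b_L\}$, realized column-by-column.

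First I would record the key polynomial identity underlying the $\widehat{\,\cdot\,}$ notation: by Notation~\ref{HatNotation}, for any $p\in\F[x]$ we have $p(x)-p(1)=(x-1)\widehat{p}(x)$, and evaluating at the operator $A$ gives the operator identity $p(A)-p(1)I=(A-I)\widehat{p}(A)=\widehat{p}(A)(A-I)$. Applying this to $\kappa_j$ and to each $q_j^i$ and substituting into \eqref{eq: characteristic vectors general case}, I would compute
\begin{align*}
(A-I)g_j
&=(A-I)\widehat{\kappa_j}(A)b_j-\sum_{i=1}^{j-1}(A-I)\widehat{q_j^i}(A)b_i\\
&=\bigl(\kappa_j(A)-\kappa_j(1)I\bigr)b_j-\sum_{i=1}^{j-1}\bigl(q_j^i(A)-q_j^i(1)I\bigr)b_i.
\end{align*}
Now I would invoke the defining relation \eqref{eq: q polynomials}, namely $\kappa_j(A)b_j=\sum_{i=1}^{j-1}q_j^i(A)b_i$, so that the operator-valued terms cancel exactly, leaving only the scalar contributions:
\begin{equation*}
(A-I)g_j=-\kappa_j(1)\,b_j+\sum_{i=1}^{j-1}q_j^i(1)\,b_i.
\end{equation*}
This is the heart of the argument: each $(A-I)g_j$ is a genuine $\F$-linear combination of $b_1,\dots,b_{j-1},b_j$, with explicitly computable coefficients. (For $j=1$ the relation $\kappa_1(A)b_1=m_{b_1}(A)b_1=\mathbf 0$ gives $(A-I)g_1=-\kappa_1(1)b_1$.)

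With this in hand I would assemble the matrix $M\in\F^{L\times L}$ whose $(i,j)$ entry is $q_j^i(1)$ for $i<j$, $-\kappa_j(1)$ for $i=j$, and $0$ for $i>j$; that is, $M$ is upper triangular with diagonal entries $-\kappa_1(1),\dots,-\kappa_L(1)$. The computation above says precisely that the $j$-th column of $(A-I)G$ equals $B$ times the $j$-th column of $M$, i.e. $BM=(A-I)G$, verifying condition \eqref{item: 2nd cond} with $N=L$ and $g_1,\dots,g_L\in Z(A;b_1,\dots,b_L)$ (membership is clear since each $g_j$ is a polynomial in $A$ applied to the $b_i$). I would also note that each $g_j\in Z(A;b_1,\dots,b_L)$, as required for applying Theorem~\ref{coro: nec suff cond}. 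Conversely, Theorem~\ref{coro: nec suff cond} guarantees that completeness is equivalent to the existence of \emph{some} $g_1,\dots,g_N$ satisfying both conditions; the subtle direction is showing that if completeness holds then \emph{these particular} $g_j$ already span $W$ under $P_W$, which I expect to be the main obstacle. The idea there is that the $g_j$ constructed in \eqref{eq: characteristic vectors general case} are, up to elements whose $(A-I)$-image lands in the span of earlier $b_i$, a maximal family of independent ``antiderivative'' vectors; I would argue that any $g\in Z(A;b_1,\dots,b_L)$ with $(A-I)g\in span\{b_1,\dots,b_L\}$ differs from an $\F$-combination of $g_1,\dots,g_L$ by a vector in $\ker(A-I)\cap Z$, and then show that projecting onto $W$ kills the ambiguity coming from eigenvalue-$1$ directions—so that $span\{P_W(g_j)\}$ captures all of $W$ attainable by Theorem~\ref{coro: nec suff cond}. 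Making this reduction rigorous, and in particular controlling the interaction between the conductor construction and the kernel of $A-I$, is where the real work lies; the forward implication via the explicit $M$ is routine once the polynomial identity is in place.
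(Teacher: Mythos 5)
Your sufficiency direction is correct and is essentially the same computation the paper uses: the identity $(A-I)g_j=-\kappa_j(1)b_j+\sum_{i<j}q_j^i(1)b_i$, obtained from $p(A)-p(1)I=(A-I)\widehat{p}(A)$ and the defining relation \eqref{eq: q polynomials}, is exactly the first half of the paper's Theorem \ref{th: characterization of all g}, and feeding it (with your upper-triangular $M$) into Theorem \ref{coro: nec suff cond} settles that direction. The genuine gap is the converse, which you yourself flag as ``where the real work lies'' but do not carry out. What is needed is precisely the hard half of Theorem \ref{th: characterization of all g}: \emph{every} vector $g\in Z(A;b_1,\dots,b_L)$ with $(A-I)g\in \mathrm{span}\{b_1,\dots,b_L\}$ is an $\F$-linear combination of $g_1,\dots,g_L$ --- exactly, with no correction term. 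Granting this, necessity is immediate: completeness gives, via Theorem \ref{coro: nec suff cond}, some admissible vectors $h_1,\dots,h_N$ with $\{P_W(h_k)\}$ spanning $W$; each $h_k$ then lies in $\mathrm{span}\{g_1,\dots,g_L\}$, so $W=\mathrm{span}\{P_W(h_k)\}\subseteq\mathrm{span}\{P_W(g_j)\}\subseteq W$. The paper proves this characterization by induction on $L$ using divisibility: writing $g=\sum_\ell p_\ell(A)b_\ell$ and $(A-I)g=\sum_\ell\mu_\ell b_\ell$, one deduces that $((A-I)p_L(A)-\mu_L)b_L\in\sum_{i<L}Z(A;b_i)$, so the minimal conductor $\kappa_L$ divides $(x-1)p_L(x)-\mu_L$; evaluating at $x=1$ determines $\mu_L$, forces $p_L=\widehat{\alpha_L\kappa_L}$, and the argument recurses down to $b_1$ using Lemma \ref{lemma: lemma very auxiliar}. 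Nothing in your sketch substitutes for this minimality-of-conductor argument.

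Moreover, the mechanism you propose for closing the gap is flawed on two counts. First, your claimed decomposition (admissible $g$ equals a combination of the $g_j$ plus an element of $\ker(A-I)\cap Z$) is not easier than the theorem itself: showing that $(A-I)\,\mathrm{span}\{g_1,\dots,g_L\}$ exhausts $(A-I)Z\cap\mathrm{span}\{b_1,\dots,b_L\}$ is essentially the same statement, and when some $\kappa_j(1)=0$ your triangular matrix $M$ is singular, so this surjectivity is exactly what must be proved. Second, the assertion that ``projecting onto $W$ kills the ambiguity coming from eigenvalue-$1$ directions'' is false in general: there is no reason for $W$ to be orthogonal to $\ker(A-I)$, and the paper's Example \ref{example: 2 vect} (with $\omega_1=e_1\in\ker(A-I)$) has $W$ \emph{inside} $\ker(A-I)$. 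In fact the resolution is the opposite of what you suggest: one shows (via the characterization) that $\ker(A-I)\cap Z(A;b_1,\dots,b_L)$ is already contained in $\mathrm{span}\{g_1,\dots,g_L\}$, so there is no ambiguity to kill.
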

 \begin{remark}
    Condition \eqref{eq: suf cond no eig 1} in Theorem \ref{coro: nec and sufi no eigenval 1} can be rewritten as
    \begin{equation*}
        span\{P_W(A-I)^{-1}b_j:\, 1\leq j\leq L\}=W
    \end{equation*}
    and it coincides with Condition \eqref {GenCondComp} in Theorem \ref{th: general test} for the case $1\not\in \sigma(A)$.  
    Indeed, if  $1\not\in\sigma(A)$, by using \eqref{eq: q polynomials}, we can rewrite the vectors defined by \eqref{eq: characteristic vectors general case}
    as
    \begin{align}
       g_j&=\widehat{\kappa_j}(A)b_j-\sum_{i=1}^{j-1}\widehat{q_j^i}(A)b_i\notag\\
       &=(A-I)^{-1}\left((\kappa_j(A)-I)b_j-\sum_{i=1}^{j-1}(q_j(A)-I)b_i \right)\notag\\
       &=(A-I)^{-1}\left(\left(\kappa_j(A)b_j-\sum_{i=1}^{j-1}q_j(A)b_i\right)-b_j+\sum_{i=1}^{j-1}b_i \right)\notag\\
       &=(A-I)^{-1}(-b_j+\sum_{i=1}^{j-1}b_i). \label{eq: charact vectors 1 not in spec}
    \end{align}
    Since
    \begin{equation*}
        span\{b_j:\, 1\leq j\leq L\}=span\{-b_j+\sum_{i=1}^{j-1}b_i:\, 1\leq j\leq L\},
    \end{equation*}
    by using \eqref{eq: charact vectors 1 not in spec}, we have that
    \begin{align*}
     span\{P_W(A-I)^{-1}b_j:\, 1\leq j\leq L\}&=span\{P_W(A-I)^{-1}\left(-b_j+\sum_{i=1}^{j-1}b_i\right):\, 1\leq j\leq L\}\\
     &=span\{P_W(g_j):\, 1\leq j\leq L\}.
    \end{align*}
    Therefore, Theorems \ref{coro: nec and sufi no eigenval 1} and \ref{th: general test} coincide. Hence,  
     Theorem \ref{th: general test} generalizes Theorem \ref{coro: nec and sufi no eigenval 1}. However,  when $1\notin \sigma(A)$ Condition \eqref{eq: suf cond no eig 1} from Theorem \ref{coro: nec and sufi no eigenval 1} is simpler and easier to use than Condition \eqref {GenCondComp}.   
\end{remark}

Theorem \ref{th: general test} allows us to generalize Corollary \ref{coro: at least K}. 

\begin{corollary}\label{coro: at least K general}
        Let $A:\F^d\to\F^d$ be a linear operator, and $W$ be a $K$-dimensional subspace of $\F^d$. If  $\{b_1,\dots,b_L\}$  form a complete set for Problem \ref{prob: general}, then $L\ge K$.
    \end{corollary}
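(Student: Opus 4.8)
The plan is to obtain this corollary as an immediate consequence of the ``only if'' direction of Theorem \ref{th: general test}, which is precisely the tool that allows one to drop the hypothesis $1\notin\sigma(A)$ required for Corollary \ref{coro: at least K}. Under that earlier hypothesis the bound could be reached through Theorem \ref{coro: nec and sufi no eigenval 1}: completeness is equivalent to $\rank\bigl(P_W(A-I)^{-1}B\bigr)=K$, and since $B$ is $d\times L$ the matrix $(A-I)^{-1}B$ has rank at most $L$, forcing $L\ge K$. The key observation is that the general test replaces this rank condition by a spanning condition, so the same elementary counting argument goes through without any invertibility of $A-I$.

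Concretely, I would proceed as follows. First, assume $\{b_1,\dots,b_L\}$ is complete for Problem \ref{prob: general}, and apply the forward implication of Theorem \ref{th: general test}. Completeness then yields both that $L\ge\dim(W)$ and that
\begin{equation*}
    \spn\bigl\{P_W(g_j):\ 1\le j\le L\bigr\}=W,
\end{equation*}
with the $g_j$ as defined in \eqref{eq: characteristic vectors general case}. Since $\dim(W)=K$, the first clause already gives $L\ge K$, which is the assertion to be proved. For completeness of the reasoning, I would point out that even without invoking that clause explicitly, the spanning condition alone forces $L\ge K$: the $K$-dimensional space $W$ is spanned by the $L$ vectors $P_W(g_1),\dots,P_W(g_L)$, and a spanning set of a $K$-dimensional space must contain at least $K$ vectors.

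There is essentially no genuine obstacle at the level of this corollary; all the difficulty has been front-loaded into Theorem \ref{th: general test}, whose proof requires the minimal $A$-conductor construction of \eqref{eq: q polynomials}--\eqref{eq: characteristic vectors general case} to handle the case $1\in\sigma(A)$ (where, as Example \ref{example_no_sol} shows, the naive reasoning behind Corollary \ref{coro: at least as many as dim W} breaks down). Once that theorem is in hand, the corollary reduces to a one-line dimension count. If a self-contained derivation were preferred that avoids quoting the embedded $L\ge\dim(W)$ clause, the spanning formulation above is the cleanest route and makes the lower bound transparent.
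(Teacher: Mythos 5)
Your proposal is correct, and its substantive argument is the same dimension count the paper itself uses: the paper's proof invokes Corollary \ref{coro: dim L} (equivalently, Remark \ref{remark: span of obs space}) to get that the characteristic space $\mathcal{G}$ is spanned by the $L$ vectors $g_j$ of \eqref{eq: characteristic vectors general case}, and completeness forces $P_W(\mathcal{G})=W$, whence $K\le\dim\mathcal{G}\le L$; your spanning-set count routed through Theorem \ref{th: general test} is the identical argument, just quoted one level higher in the chain of results. One caution about your primary route: deducing the corollary from the clause $L\ge\dim(W)$ embedded in the statement of Theorem \ref{th: general test} is essentially circular in the paper's architecture --- that clause \emph{is} the corollary, it is redundant in the equivalence (the spanning condition implies it), and the paper's one-line proof of the theorem (``a direct consequence of Theorem \ref{th: characterization of all g} and Theorem \ref{coro: nec suff cond}'') only genuinely delivers the spanning condition, which is precisely why the corollary is given its own separate proof. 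Your explicitly offered fallback --- $W$ is $K$-dimensional and is spanned by the $L$ vectors $P_W(g_1),\dots,P_W(g_L)$, so $L\ge K$ --- is the argument that carries the weight, and with it the proposal is sound.
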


When $\dim (W)=K$, if $1 \notin \sigma(A)$,  one can always obtain a complete set of observational vectors with cardinality $K$ as in Corollary \ref {coro: at least as many as dim W}. However, when $1\in \sigma (A)$, we may  need  $L>K$ observational vectors for completeness. The following example illustrates the situation where $1 \in \sigma(A)$ when $\dim (W)=1$. 

{\begin{example}\label{example: 2 vect}
            Consider the $7\times 7$ matrix in Jordan form
    \begin{equation}\label{example: A}
    A=\begin{bmatrix}
      \begin{matrix}
      1 & 1 & 0 \\
      0 & 1 & 1 \\
      0 & 0 & 1
      \end{matrix}
      &  \bigzero \\
      \bigzero &  
      \begin{matrix}
      1 & 1 & 0 & 0\\
      0 & 1 & 1 & 0\\
      0 & 0 & 1 & 1\\
      0 & 0 & 0 & 1
      \end{matrix}
    \end{bmatrix}
    \end{equation}
   Let $W:=span\{\omega_1\}$, where $\omega_1$  is any standard  vector basis for $\R^7$. For each one of the seven choices for $\omega_1$, we will provide all possible sampling vectors $b_1\in \{e_1,\dots,e_7\}$ that are complete for Problem \ref{prob: general}. Since in this example, $1 \in \sigma(A)$ there are cases where completeness is not achieved using a single observational vector $b_1$. For those situations, we will provide all possible sets of vectors $\{b_1,b_2\}\subset \{e_1,\dots,e_7\}$ that are complete for the source recovery problem \ref{prob: general}. 
    
    We underline that, in this example, we consider only $\omega_1,b_1,b_2\in\{e_1,\dots, e_7\}$. This corresponds to modeling some applications similar to the one depicted in Figure \ref{fig: city}.
    
    In general, by fixing an eigenvalue $\lambda$ of a matrix $A$ and a Jordan block of size $J\times J$ corresponding to $\lambda$, it gives rise to what is called a \textsl{Jordan chain}: A set of linearly independent vectors $v_j$, $j=1, \ldots, J$. On the one hand, the vector $v_J$ satisfying $(A-\lambda I)^J v_J=0$ is the so-called \textsl{lead vector of the chain}. On the other hand, the vector $v_1:=(A-\lambda I)^{J-1} v_J$ is an eigenvector of $A$. For $1\leq j\leq J$, 
    \begin{equation}\label{eq: shift}
     v_{j-1}:=(A-\lambda I)v_{j}.   
    \end{equation}
    In particular, the lead vector $v_J$ generates the chain via multiplication by $A-\lambda I$.
    
    For this example, the only eigenvalue of $A$ is $\lambda=1$ and $A$ has two Jordan blocks. For each block, we will work with the following Jordan chains:
    \begin{itemize}
        \item Top-left $3\times 3$ Jordan block: $v_1=e_1$ (eigenvector), $v_2=e_2$, and  $v_3=e_3$ (lead vector).
        \item Bottom-right  $4\times 4$ Jordan block: $v_1=e_1$ (eigenvector), $v_2=e_2$, $v_3=e_3$, and $v_4=e_4$ (lead vector).
    \end{itemize}
    
    We start by analyzing the cases where $\omega_1$ is an eigenvector of $A$. That is, $\omega_1=e_1$ and $\omega_1=e_4$. In these situations, Condition \eqref{eq: not observable subspace} from Theorem \ref{negative_conclusion} below implies that there is always a complete vector $b_1$ for the source recovery problem since $\omega_1\in \ker(A-I)$.   
    In fact,  $\omega_1$ is contained in one of the two Jordan chains, and we will show that any vector $b_1$  in that chain is complete. This situation is summarized in the following table: 
    \begin{center}
    \begin{tabular}{||c || c  ||} 
     \hline \hline
    $\omega_1$ & \textbf{complete observational vector $b_1$} \\
    [0.5ex] 
     \hline\hline
    $e_1$      & $e_1$ or $e_2$ or $e_3$              \\ \hline
    $e_4$      & $e_4$ or $e_5$ or $e_6$ or $e_7$ \\ 
    \hline
    \end{tabular}
    \end{center}
    To show this, consider for example the case $\omega_1=e_1$ (the case $\omega_1=e_4$ is analogous). The fact that $b_1=e_1$ is complete is  $\textsl{trivial}$  as it  samples exactly at the source location $\omega_1=e_1$.  For the other two choices $b_1=e_2$, $b_1=e_3$, their minimal $A$-annihilator polynomials are $m_{b_1}(x)=\kappa_1(x)=(x-1)^2$ and $m_{b_1}(x)=\kappa_1(x)=(x-1)^3$, resp. Thus, $\widehat{\kappa_1}$ is given by $(x-1)$ if $b_1=e_2$ and by $(x-1)^2$ if $b_1=e_3$. %
    We now show that Condition \eqref{GenCondComp} from Theorem \ref{th: general test} is satisfied when $b_1=e_2$.  For this case,  the vector $g_1$ in \eqref {eq: characteristic vectors general case} is given by $g_1=(A-I)e_2$, and  we get 
    $$ P_W(g_1)=P_W(A-I)e_2=P_We_1=P_W\omega_1=\omega_1.$$
    Thus, $b_1=e_2$ is complete. Similarly, if $b_1=e_3$, then $g_1=(A-I)^2e_3$ and we have $ P_W(g_1)=\omega_1$, and $b_1=e_3$ is complete.

    Now, if  $\omega_1\in\{e_2,e_3,e_5,e_6,e_7\}$, Theorem \ref{negative_conclusion} below will determine that one spatial observational vector $b_1$ is never complete, that is, it is not enough for solving Problem \ref{prob: general}. 
    Indeed, in these cases $\omega_1$ is orthogonal to $\ker(A-I)=\mathrm{span}\{e_1,e_4\}$, and since $(A^*-I)^7=\textbf{0}$, it trivially holds that $\omega_1\in\ker((A^*-I)^7)$. Thus, $\omega_1$ does not satisfy Condition \eqref{eq: not observable subspace}. Therefore, at least two distinct spatial observational sampling vectors $b_1$ and $b_2$ will be needed  for a complete observational set.
    
    We recall that, for this example, we added the constraint $b_1,b_2\in\{e_1,\dots,e_7\}$. Thus, since $A$ is given in Jordan form, we fall under three possible scenarios:
    \begin{equation}
        Z(A;b_1)\cap Z(A;b_2)=\{\textbf{0}\} \quad \text{or} \quad Z(A;b_2)\subseteq Z(A;b_1) \quad \text{or} \quad Z(A;b_1)\subseteq Z(A;b_2).
    \end{equation}
    Since the last two cases are the same if we switch the roles of $b_1$ and $b_2$, we will just focus on the first two. 
    
  The first case will never give us a complete set for the source recovery problem. To see this,  note that if $Z(A;b_1)\cap Z(A;b_2)=\{\textbf{0}\}$, then $b_1 \in \{e_1,e_2,e_3\}$ and $b_2 \in \{e_4,e_5,e_6, e_7\}$ (or vice versa). Then, the vectors in \eqref {eq: characteristic vectors general case} are  $g_1=\widehat m_{b_1}(A)b_1=e_1$ and $g_2=\widehat m_{b_2}(A)b_2=e_4$ (or vice versa). Hence,  $P_W(g_1)=0$ and $P_W(g_2)=0$, and $\{b_1,b_2\}$ is not complete.  
     
 For the  case $Z(A;b_2)\subseteq Z(A;b_1)$,
  let  $b_2=q_2^1(A)b_1$ for some polynomial $q_2^1\in\F[x]$. Since $b_2\in Z(A;b_1)$, the minimal $A$-conductor  of $b_2$ to $Z(A;b_1)$ is $\kappa_2\equiv 1$. Thus,   $\widehat{\kappa_2}\equiv 0$. Then, the vectors defined in \eqref{eq: characteristic vectors general case} are $g_1=\widehat m_{b_1}(A)b_1$, and $g_2= \widehat{q_2^1}(A)b_1$. 
  In particular, $P_W(g_1)=0$. Choosing $b_2=(A-I)\omega_1$, we have that $P_W(g_2)\ne\textbf{ 0}$.  
   Therefore, we obtain the following table summarizing all possible complete spatial observational sets of sampling vectors $\{b_1,b_2\}\subset\{e_1,\dots,e_7\}$ for each choice of the source $\omega_1$: 
    \begin{center}
    \begin{tabular}{||c || c | c ||} 
    \hline\hline
     $\omega_1$ & $b_1$ & $b_2$  \\ [0.5ex] 
     \hline\hline
     $e_2$      &   $e_3$ or $e_2$ & $e_1$                                   \\  \hline
    $e_3$      &        $e_3$ & $e_2$                                      \\  \hline
    $e_5$      &                       $e_7$  or $e_6$ or $e_5$ & $e_4$                       \\  \hline
    $e_6$      &    $e_7$  or $e_6$ & $e_5$                                           \\  \hline
    $e_7$      & $e_7$ & $e_6$             \\           
     \hline
    \end{tabular}
    \end{center}
    \end{example}

 \subsection {Existence of an observational vector for a single source term problem}\label{Single Source Term}

Theorem \ref{th: general test} provides a test for completeness of a given spatial observations set of sampling vectors.  Given $W$ with $\dim W=K$, when $1 \notin \sigma (A)$, a complete set of $K$ observational vectors always exists. However, when $1 \in \sigma (A)$,  Examples \ref {example_no_sol} and  \ref {example: 2 vect}} show   that we may need more than $K$ vectors. 
In order to address the challenges associated with the case where $1\in \sigma(A)$, we make the assumption for the rest of this section that the source term belongs to a one-dimensional subspace $W$ of $\F^d$ spanned by a known unit vector $\omega_1$
and provide the necessary and sufficient conditions under which there exists a complete set for Problem \ref{prob: general} that consists of a single vector.

\begin{theorem}
\label{negative_conclusion}
Let $A$ be a linear operator in $\F^d$, and let  $W$ be the one-dimensional subspace of $\F^d$ spanned by a unit vector $\omega_1\in\F^d\setminus\{{\bf 0}\}$.
Then, there exists a complete observational vector $b$ for Problem \ref{prob: general}  if and only if 
\begin{equation}\label{eq: not observable subspace}
    \omega_1\not\in \ker(A^*-I)^d\cap(\ker (A-I))^\perp= (Range((A-I)^d))^\perp\cap(\ker (A-I))^\perp.
\end{equation}        
\end{theorem}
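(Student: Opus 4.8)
The plan is to prove Theorem~\ref{negative_conclusion} by applying the general completeness criterion of Theorem~\ref{th: general test} to the special case $\dim W = 1$, where $W = \spn\{\omega_1\}$ with $\omega_1$ a unit vector. Since $K = \dim W = 1$, the condition $L \geq \dim W$ from Theorem~\ref{th: general test} is automatically satisfied for any single observational vector $b$, so completeness of $\{b\}$ reduces entirely to the spanning condition $\spn\{P_W(g_1)\} = W$. For a single vector $b = b_1$, the construction in \eqref{eq: characteristic vectors general case} simplifies: we have $\kappa_1 = m_{b_1} = m_b$, the minimal $A$-annihilating polynomial of $b$, and there are no lower-index vectors to subtract, so $g_1 = \widehat{m_b}(A)\,b$. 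Because $P_W(v) = \langle v, \omega_1\rangle\,\omega_1$ in a one-dimensional space, the spanning condition $\spn\{P_W(g_1)\} = W$ is equivalent to the scalar condition $\langle \widehat{m_b}(A)\,b,\ \omega_1\rangle \neq 0$. Thus I would first reduce the theorem to: \emph{a complete $b$ exists if and only if there is some $b \in \F^d$ with $\langle \widehat{m_b}(A)\,b,\ \omega_1\rangle \neq 0$.}

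**The two equivalent descriptions of the forbidden set.**

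Next I would verify the equality of the two sets appearing in \eqref{eq: not observable subspace}, namely
\[
\ker(A^*-I)^d \cap (\ker(A-I))^\perp = \bigl(\Range((A-I)^d)\bigr)^\perp \cap (\ker(A-I))^\perp.
\]
This is a standard linear-algebra identity: for any operator $T$ one has $\ker(T^*) = (\Range(T))^\perp$, applied here with $T = (A-I)^d$ and using $(T^*) = (A^*-I)^d$. Since the exponent $d$ equals the dimension, $\Range((A-I)^d)$ stabilizes (the image has reached the stable range where $\F^d = \ker(A-I)^d \oplus \Range(A-I)^d$ is the generalized-eigenspace decomposition at eigenvalue $1$). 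I would record this decomposition explicitly, as it is the key structural fact driving the main argument.

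**The core equivalence (the main obstacle).**

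The crux is to show that the scalar quantity $\langle \widehat{m_b}(A)\,b,\ \omega_1\rangle$ can be made nonzero for \emph{some} $b$ precisely when $\omega_1 \notin (\Range((A-I)^d))^\perp \cap (\ker(A-I))^\perp$. I expect this to be the hardest part, and I would attack it by analyzing the vector $\widehat{m_b}(A)\,b$ through the identity $\widehat{p}(x)(x-1) = p(x) - p(1)$, which gives $(A-I)\widehat{m_b}(A)\,b = m_b(A)\,b - m_b(1)\,b = -m_b(1)\,b$, since $m_b(A)b = \mathbf{0}$. Hence $\widehat{m_b}(A)\,b$ is, up to the scalar $-m_b(1)$, a solution $h$ to $(A-I)h = b$ when $1 \notin \sigma(A)$, and more delicately when $1 \in \sigma(A)$ one must track whether $m_b(1) = 0$ (i.e. whether $1$ is a root of $m_b$, equivalently whether the component of $b$ in the generalized $1$-eigenspace is nonzero). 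The plan is to split into the two cases: if $\omega_1$ has a nonzero component in $\ker(A-I)$, then choosing $b$ appropriately within the corresponding Jordan chain makes $P_W(g_1) = \omega_1 \neq 0$ (mirroring the eigenvector computation in Example~\ref{example: 2 vect}); if $\omega_1 \in (\ker(A-I))^\perp$, then completeness hinges on reaching $\omega_1$ via the $\Range((A-I)^d)$ direction, which is exactly obstructed when $\omega_1 \perp \Range((A-I)^d)$.

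**Closing the two directions.**

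Finally I would assemble both implications. For the "if" direction (the condition \eqref{eq: not observable subspace} holds, i.e. $\omega_1$ is \emph{not} in the forbidden set), I would exhibit an explicit complete $b$: decompose $\omega_1 = u + w$ along $\ker(A-I)^d \oplus \Range(A-I)^d$ and, using the Jordan-chain structure at eigenvalue $1$ together with the generator property of the lead vectors, construct $b$ so that $\widehat{m_b}(A)\,b$ has nonzero inner product with $\omega_1$. For the "only if" direction, I would show the contrapositive: if $\omega_1$ lies in both $(\ker(A-I))^\perp$ and $(\Range((A-I)^d))^\perp$, then for \emph{every} $b$ the vector $g_1 = \widehat{m_b}(A)\,b$ satisfies $\langle g_1, \omega_1\rangle = 0$, so no single $b$ can be complete; the argument here uses that $g_1$ always lands in $\ker(A-I) + \Range((A-I)^d)$ (via the relation $(A-I)g_1 = -m_b(1)\,b$ analyzed case-by-case on whether $m_b(1)=0$), both of whose summands are orthogonal to such an $\omega_1$. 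This contrapositive analysis is where the full force of the generalized-eigenspace decomposition is needed, and it is the step I would write most carefully.
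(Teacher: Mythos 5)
Your proposal is correct and takes essentially the same route as the paper: the reduction of completeness of a single $b$ to the scalar condition $\langle \widehat{m_b}(A)b,\omega_1\rangle\neq 0$ is exactly the paper's Lemma \ref{test}, and the resolution of that condition rests, in both arguments, on the generalized-eigenspace decomposition at eigenvalue $1$ together with the dichotomy $m_b(1)=0$ (forcing $\widehat{m_b}(A)b\in\ker(A-I)$) versus $m_b(1)\neq 0$ (forcing $b$, and hence $\widehat{m_b}(A)b$, into $Range((A-I)^d)$). The only difference is packaging: you invoke the Fitting decomposition $\F^d=\ker(A-I)^d\oplus Range((A-I)^d)$ and $A$-invariance of the stable range directly, while the paper encodes the same splitting in the spectral projectors $E_1$, $E_1^*$ of its appendix; in particular, your construction $b=(A-I)h$ with $h\in Range((A-I)^d)$ and $\langle h,\omega_1\rangle\neq 0$ coincides with the paper's explicit choice $b=(A-I)(I-E_1)(I-E_1^*)\omega_1$.
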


 \section{poofs}\label{sec: proofs}  
 
 \begin {proof}[Proof of Theorem  \ref{th: prop_characterization_by_eq}]
        Let $L=1$ and write $b_1:=b$. 
       Since  $\Lambda_n$ in \eqref{Lambda_powers of A}  is the sum of consecutive powers of $A$, 
         it is not difficult to show that       \begin{equation}\label{Lemma_frame_frame}
            span\{\Lambda_1 b,\Lambda_2 b,\dots,\Lambda_{N} b\}=span\{b,Ab,\dots,A^{N-1}b \}.
        \end{equation}       
       From its definition  \eqref {eq: span of measurements}, any vector  $[f \; \; g]^T \in \mathcal{O}(b)$ can be written as :
        \begin{equation*}
        \left[ {\begin{array} {r}
        f \\
        g
         \end{array} }\right] =\sum_{n=0}^k a_n \left[ {\begin{array} {r}
        A^nb \\
        \Lambda_nb
         \end{array} }\right],
        \end{equation*}
        for some scalars $a_0,\dots,a_k$. Then, 
        using the simple identity 
        \begin{equation}\label{eq: trick}
          (A-I)\Lambda_k=(A-I)\sum_{j=0}^{k-1}A^j=A^k-I,  
        \end{equation}
        we get that
        \begin{align*}
            (A-I)g=\sum_{n=0}^ka_n(A-I)\Lambda_n b=\sum_{n=0}^ka_n(A^n-I)b
            =f-\sum_{n=0}^ka_n b.
        \end{align*}
        From this last expression, it follows 
        that $$f=(A-I)g+\mu b, \qquad \text{where } \mu=-\sum_{n=0}^ka_n, \text{ and } g=\sum_{n=1}^ka_n\Lambda_nb.$$ 
        From   \eqref{Lemma_frame_frame}, and the expression for $g$, we conclude that $g\in Z(A;b)$.
        Thus, 
        \begin{equation*}
            span \left\{\left[ {\begin{array} {r}
        A^nb \\
        \Lambda_nb
         \end{array} }\right]: \, n\in\N_0\right\} \subseteq \left\{\left[ {\begin{array} {rr}(A-I)g+\mu b\\g\qquad\qquad\end{array} }\right]\in\F^{2d}: \, g\in Z(A;b), \, \mu\in\F \right\}.
        \end{equation*}
        Conversely, consider the vector in $\F^{2d}$ 
        $$\left[ {\begin{array} {rr}(A-I)g+\mu b\\g\qquad \qquad \end{array} }\right]$$
        for an arbitrary vector  $g\in Z(A;b)$ and a scalar $\mu\in\F$. By hypothesis, $g$ is a finite linear combination of vectors of the form $A^nb$. Using \eqref{Lemma_frame_frame} again, it follows that 
        \begin{equation}\label{eq: aux thm 2}
         g=\sum_{n=1}^ka_n\Lambda_n b   
        \end{equation}
        for some scalars $a_1,\dots,a_k$. Now, by utilizing \eqref{eq: trick},
        we have
        \begin{align*}
          (A-I)g&=(A-I)\sum_{n=1}^ka_n\Lambda_n b=\sum_{n=1}^ka_nA^nb-\sum_{n=1}^ka_nb\\
          &=\sum_{n=1}^ka_nA^nb-\sum_{n=1}^ka_nb\pm a_0b =\sum_{n=0}^ka_nA^nb-\sum_{n=0}^ka_nb
        \end{align*}
        where the equality holds for any $a_0\in\F$. In particular, if we choose
        $$a_0:= \mu-\sum_{n=1}^ka_n,$$
        then 
        \begin{equation}\label{eq: aux 2 thm 2}
            (A-I)g+\mu b=\sum_{n=0}^k a_nA^nb-\sum_{n=0}^ka_nb+\sum_{n=0}^ka_nb=\sum_{n=0}^ka_nA^nb.
        \end{equation}
        Since $\Lambda_0={\bf 0}$, the expressions \eqref{eq: aux thm 2} and \eqref{eq: aux 2 thm 2}  prove that
        $$\left[ {\begin{array} {rr}(A-I)g+\mu b\\g\qquad\qquad \end{array} }\right]=
        \sum_{n=0}^ka_n\left[ {\begin{array} {rr}A^nb\\\Lambda_nb\end{array} }\right].
        $$
        Hence,
        \begin{equation*}\left\{\left[ {\begin{array} {rr}(A-I)g+\mu b\\g\qquad\qquad\end{array} }\right]\in\F^{2d}: \, g\in Z(A;b), \, \mu\in\F \right\}\subseteq 
            span \left\{\left[ {\begin{array} {r}
        A^nb \\
        \Lambda_nb
         \end{array} }\right]: \, n\in\N_0\right\}. 
        \end{equation*}    
        
        The case for multiple vectors $b_1,\dots,b_L$ follows by using the same arguments.

       Finally, the dimension of $\mathcal{O}(b_1,\dots,b_L)\subset \F^{2d}$
        is at most  $d+L$ since 
         \begin{equation*}\left[ {\begin{array} {rr}(A-I)g+\sum\limits_{i=1}^L\mu_i b_i \\g\qquad\qquad\end{array} }\right]= \left[ {\begin{array} {cc}(A-I)\\I\end{array} }\right]g+\sum\limits_{i=1}^L \mu_i \left[ {\begin{array} {cc}b_i\\\textbf{0}\end{array} }\right], \quad g \in  Z(A;b_1,\cdots,b_L)\subseteq \F^d.         \end{equation*}     
    \end{proof}

     \begin{proof}[Proof of Corollary \ref{rank_d+1} ]
    Consider the operator $F:\F^d\to\R^{2d}$ given in matrix form by
    \begin{equation}
        F:=\begin{bmatrix}
            A-I\\
            I
        \end{bmatrix}
    \end{equation}
    where $I$ denotes the $d\times d$ identity matrix. Since $\mathrm{dim}(Z(A;b))=r$, where $r$ is the degree of $m_b$ (see Remark \ref {KS}), then $\mathrm{dim}(F(Z(A;b)))\leq r$. On the other hand, using that $\{b,Ab,\dots, A^{r-1}b\}$ is a basis from $Z(A;b)$ (see Remark \ref {KS}), it follows that vectors 
    \begin{equation*}
        F(A^jb)=\begin{bmatrix}
            (A-I)A^jb\\
            A^jb
        \end{bmatrix} \qquad \text{for } \quad 0\leq j\leq r-1
    \end{equation*}
    are linearly independent. So, $\mathrm{dim}(F(Z(A;b)))= r$. Since $b\ne \textbf{0}$, for any non-zero $g\in Z(A;b)$, the vectors
    \begin{equation*}
        \begin{bmatrix}
            (A-I)g\\
            g
        \end{bmatrix} \quad \text{ and } \quad
                \begin{bmatrix}
            b\\
            \textbf{0}
        \end{bmatrix}
    \end{equation*}
    are linearly independent.
          Therefore, from the characterization given by Theorem \ref{th: prop_characterization_by_eq}, 
     \begin{align}\label{eq: all samples for 1}
                \mathcal{O}(b):= span \left\{\left[ {\begin{array} {r}
                A^nb \\
                \Lambda_nb
                 \end{array} }\right]: \, n\in\N_0\right\} &= \left\{\left[ {\begin{array} {rr}(A-I)g+\mu b\\ 
                 g\qquad \qquad \end{array} }\right]
                 : \, g\in Z(A;b), \, \mu\in\F \right\}
    \end{align}
    and we have that the dimension of $\mathcal{O}(b)$ is $r+1$.  
    \end{proof}

\subsection {Proofs of Section \ref {subsec: nec suf cond}}
We start with a lemma that relates linear combinations of measurements as in \eqref{data} with vectors in the augmented observational orbits space 
$\mathcal O(b_1,\dots,b_L)$ defined in \eqref {eq: span of measurements}. Specifically, let $\{\alpha_{n,\ell}\}_{n,\ell}$ be an arbitrary finite set of scalars, and consider the linear combination of measurements 
\begin{equation}\label{eq: equations with g a}
            \sum_{n,\ell}\alpha_{n,\ell} \, y_{\ell}(n)= \langle x_0, \sum_{n,\ell}\alpha_{n,\ell}A^n b_{\ell}\rangle +\langle \omega, \sum_{n,\ell}\alpha_{n,\ell}\Lambda_n b_{\ell}\rangle. 
        \end{equation} We have:
 \begin{lemma}\label{lemma aux}
        Let $A$ be a linear operator in $\F^d$, and $b_1,\dots, b_L$ be vectors in $\F^d$.    
        For each $n\in\N_0$ and $\ell\in \{1,\dots, L\}$, let $y_\ell(n):=\langle x_0, A^n b_{\ell}\rangle +\langle \omega, \Lambda_n b_{\ell}\rangle$ as in \eqref{data}. Then 
        \begin{enumerate}
         \item[(a)] For any linear combinations of measurements as in \eqref {eq: equations with g a}, the vectors  
        \begin{equation}\label{eq: vectors f and g}
          f:=\sum\alpha_{n,\ell}A^n b_{\ell} \qquad \text{ and }\qquad  g:=\sum\alpha_{n,\ell}\Lambda_n b_{\ell}.  
        \end{equation}
        are such that, $g\in Z(A;b_1,\dots,b_L)$ and $f = (A-I)g+ \sum_{\ell=1}^L \mu_\ell b_\ell$ for some $\mu_1,...,\mu_L\in \mathbb{F}$ (which depend on the scalars $\alpha_{n,\ell}$). Hence (by Theorem \ref {th: prop_characterization_by_eq}) $(f, g)^T \in \mathcal O(b_1,\dots,b_L)$.    
       \item[(b)] Conversely, for any pair of vectors $f,g$ such that $g\in Z(A;b_1,\dots,b_L)$, $f:= (A-I)g+ \sum_{j=1}^L \mu_\ell b_\ell,$
        where $\mu_1,...,\mu_L\in \mathbb{F}$, 
       there exists a finite set of scalars $\{\alpha_{n,\ell}\}_{n,\ell}\subset \F$ (depending on $\mu_1,\dots,\mu_L$ and $g$) such that $f$ and $g$ can be written as in \eqref{eq: vectors f and g} and Equation \eqref {eq: equations with g a} is satisfied. In particular  
        \begin{equation}\label{eq: equations with g2}
            \sum\alpha_{n,\ell} \, y_{\ell}(n)= \langle x_0, f\rangle +\langle \omega, g\rangle.
        \end{equation} 
        \end{enumerate}
    \end{lemma}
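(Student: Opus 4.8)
The plan is to treat this lemma as the measurement-language translation of Theorem \ref{th: prop_characterization_by_eq} together with the bilinearity of the map $(x_0,\omega)\mapsto \langle x_0, \cdot\rangle + \langle\omega,\cdot\rangle$; concretely, the two parts are the two inclusions already proved in that theorem, rewritten in terms of linear combinations of the samples $y_\ell(n)$. For part (a), I would begin directly from the definitions $f=\sum\alpha_{n,\ell}A^n b_\ell$ and $g=\sum\alpha_{n,\ell}\Lambda_n b_\ell$. The membership $g\in Z(A;b_1,\dots,b_L)$ is immediate, since $\Lambda_n b_\ell=\sum_{j=0}^{n-1}A^j b_\ell$ lies in $Z(A;b_\ell)\subseteq Z(A;b_1,\dots,b_L)$ by the definition of $\Lambda_n$. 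To exhibit the $\mu_\ell$, I would apply $A-I$ to $g$ and invoke the identity $(A-I)\Lambda_n=A^n-I$ from \eqref{eq: trick}, obtaining $(A-I)g=\sum\alpha_{n,\ell}(A^n-I)b_\ell=f-\sum_\ell\big(\sum_n\alpha_{n,\ell}\big)b_\ell$, so that $f=(A-I)g+\sum_\ell\mu_\ell b_\ell$ with $\mu_\ell:=\sum_n\alpha_{n,\ell}$. Membership $(f,g)^T\in\mathcal{O}(b_1,\dots,b_L)$ then follows, either trivially because $(f,g)^T$ is a linear combination of the spanning vectors of $\mathcal{O}$, or via the characterization in Theorem \ref{th: prop_characterization_by_eq}.

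For part (b), I would run this construction in reverse, mirroring the converse inclusion in the proof of Theorem \ref{th: prop_characterization_by_eq}. Given $g\in Z(A;b_1,\dots,b_L)=\sum_\ell Z(A;b_\ell)$, I would decompose $g=\sum_\ell h_\ell$ with $h_\ell\in Z(A;b_\ell)$ and use \eqref{Lemma_frame_frame} to write each $h_\ell=\sum_{n\ge 1}a_{n,\ell}\Lambda_n b_\ell$. Setting $\alpha_{n,\ell}:=a_{n,\ell}$ for $n\ge 1$ reproduces $g=\sum_{n,\ell}\alpha_{n,\ell}\Lambda_n b_\ell$, since $\Lambda_0=\textbf{0}$ makes the $n=0$ terms invisible in the $g$-representation. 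I would then fix the remaining freedom by choosing $\alpha_{0,\ell}:=\mu_\ell-\sum_{n\ge 1}a_{n,\ell}$, exactly as in the scalar computation in Theorem \ref{th: prop_characterization_by_eq}; the identity $(A-I)\Lambda_n=A^n-I$ then gives $\sum_{n,\ell}\alpha_{n,\ell}A^n b_\ell=(A-I)g+\sum_\ell\mu_\ell b_\ell=f$, so both $f$ and $g$ have the required form \eqref{eq: vectors f and g}. Finally, substituting these $\alpha_{n,\ell}$ into $\sum_{n,\ell}\alpha_{n,\ell}y_\ell(n)$ and using bilinearity of the inner product yields \eqref{eq: equations with g2}.

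The only genuinely fiddly point is the bookkeeping when $L>1$: the decomposition of $g$ into contributions $h_\ell\in Z(A;b_\ell)$ need not be unique (the spaces $Z(A;b_\ell)$ can overlap), but uniqueness is never needed---any one valid decomposition suffices---and the prescribed value of each $\mu_\ell$ is absorbed by the single free coefficient $\alpha_{0,\ell}$. Consequently I anticipate no real obstacle; the lemma is essentially an accounting of Theorem \ref{th: prop_characterization_by_eq} together with the observation that $\sum_{n,\ell}\alpha_{n,\ell}y_\ell(n)$ is precisely the pairing of the data vector $(x_0,\omega)$ against $(f,g)$.
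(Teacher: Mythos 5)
Your proposal is correct and follows essentially the same route as the paper: both parts are read off from the characterization of $\mathcal{O}(b_1,\dots,b_L)$ in Theorem \ref{th: prop_characterization_by_eq}, which is precisely what the paper's proof does (it simply invokes that theorem in each direction). Your version additionally inlines the computations behind that theorem---the identity $(A-I)\Lambda_n=A^n-I$, the absorption of each $\mu_\ell$ into the free coefficient $\alpha_{0,\ell}$, and the decomposition $g=\sum_\ell h_\ell$ with $h_\ell\in Z(A;b_\ell)$ for $L>1$---details the paper leaves inside the proof of Theorem \ref{th: prop_characterization_by_eq} and its remark that the multi-vector case ``follows by the same arguments.''
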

    
   \begin{proof}
        Using linear combinations of the time-space measurements \eqref{data}, we can generate any equation of the form 
        \begin{equation}\label{eq: equations with g}
            \underbrace{\sum\alpha_{n,\ell} \, y_{\ell}(n)}_{\widetilde{y}}= \langle x_0, \underbrace{\sum\alpha_{n,\ell}A^n b_{\ell}}_{f}\rangle +\langle \omega, \underbrace{\sum\alpha_{n,\ell}\Lambda_n b_{\ell}}_{g}\rangle.
        \end{equation}
        From 
        Theorem \ref{th: prop_characterization_by_eq}
        we know that the vectors $f$ and $g$ defined in Equation \eqref{eq: equations with g} are related by $f = (A-I)g+ \sum \mu_\ell b_\ell$ for $g\in Z(A;b_1,\dots,b_\ell)$ and for some scalars $\mu_\ell$,  $\ell=1,\dots,L$. Conversely, since Theorem \ref{th: prop_characterization_by_eq} gives  us the equality
        \begin{align*}
            \mathcal{O}&(b_1,\dots,b_L)\\&=\left\{
            \left[ {\begin{array} {r}
            (A-I)g+\displaystyle\sum_{\ell=1}^L \mu_\ell\, b_\ell \\g\qquad \qquad \qquad \end{array} }\right]\in\F^{2d}: \, g\in Z(A;b_1,\dots,b_L), \, \mu_1,\dots,\mu_L\in\F \right\},
        \end{align*}
        we have that given any vector $g\in Z(A;b_1,\dots,b_\ell)$ and any scalars $\mu_1,\dots,\mu_L$, the vector $f := (A-I)g+ \sum \mu_\ell b_\ell$ belongs to $Z(A;b_1,\dots,b_L)$ and thus there exists scalars $\{\alpha_{n,\ell}\}$ that provides us an equation like \eqref{eq: equations with g}. 
   \end{proof}

    \begin{proof}[Proof of Theorem \ref{coro: nec suff cond}]\, 
        \begin{itemize}
 \item             
        Suppose there exist vectors $g_1,\dots,g_N\in Z(A;b_1,\dots,b_L)$ satisfying Conditions \ref{item: 2nd cond} and \ref{item: 1st cond} of Theorem \ref {coro: nec suff cond}. 
    
       Condition \ref{item: 2nd cond} states that there exists a matrix $M\in\F^{L\times N}$ satisfying equation  \eqref{eq: nec cond 2} $(A-I)G - BM = 0$ (where $B$ and $G$ are the matrices having the vectors $b_1,\dots,b_L$ and the vectors $g_1,\dots, g_N$, resp.,  as columns). Denoting $\mu_\ell^j:=-M(\ell,j)$, we can rewrite \eqref{eq: nec cond 2} as $N$ equations of the form $(A-I)g_j+ \sum_{\ell=1}^L \mu_\ell^j b_\ell={\bf 0}$, for $1\leq j\leq N$. Defining $f_j := (A-I)g_j+ \sum \mu_\ell^j b_\ell$, we have that $f_j=\textbf{0}$ for each $1\leq j\leq N$. Then,  Part (b) of Lemma \ref{lemma aux}  implies that, for each $1\leq j\leq N$ 
        there exists a finite set of scalars $\{\alpha_{n,\ell}^j\}_{n,\ell}\subset \F$ (that depend on $\mu_1,\dots,\mu_L$ and $g_j$) such that         
        we get $N$ equations of the form
        \begin{equation}\label{eq: the equations that we need}
            \sum_{n,\ell} \alpha_{n,\ell}^j \, y_\ell(n) = \langle \omega,f_j\rangle+ \langle \omega,g_j\rangle, \qquad 1\leq j\leq N.
        \end{equation}
        Using the fact that $f_j=\textbf{0}$, and  that $\omega\in W$, we obtain
        \begin{equation}\label{eq: the equations that we need2zz}
            \sum_{n,\ell} \alpha_{n,\ell}^j \, y_\ell(n) = \langle \omega,g_j\rangle=\langle P_W \omega, g_j\rangle= \langle  \omega, P_W g_j\rangle, \qquad 1\leq j\leq N.
        \end{equation}

        Since the set of vectors $\{g_1,\dots,g_N\}$ also satisfies Condition \ref{item: 1st cond}, $\{P_Wg_1,\dots,P_Wg_N\}$ is a spanning set for $W$. Thus, $\{P_Wg_1,\dots,P_Wg_N\}$ form a frame for $W$ and $\omega$ can be recovered from the system of equations \eqref {eq: the equations that we need}.

        \item For the converse, assume that there exists a set of spatial observational vectors $\{b_1,\dots,b_L\}$ that are complete for Problem \ref{prob: general}. That is, there exists a finite number $N\geq K$ of equations of the form \eqref{eq: equations with g a} that allows us to solve for $\omega\in W$ using linear operations on the measurements $y_{\ell}(n)$. Due to Lemma \ref{lemma aux}, such a system of $N$ equations of the form \eqref{eq: equations with g a} can be written as a system of $N$ equations of the form \eqref{eq: equations with g2} and, in matrix notation, we can rewrite the hypothesis that $\{b_1,\dots,b_L\}$ are complete for Problem \ref{prob: general} as follows:      
        There exist $N$ vectors $(f_1,g_1)^T,\dots, (f_N,g_N)^T\in \mathcal{O}(b_1,\dots,b_L)$  and a vector $\widetilde{y}:=(\widetilde{y}_1,\dots,\widetilde{y}_N)^T\in \F^N$  where $\widetilde{y}_j:=\sum\limits_{\ell=1}^L\sum_{n}\alpha_{n,\ell}^j \, y_{\ell}(n)$ for a finite set of scalars $\{\alpha_{n,\ell}^j\}$, such that any $\omega\in W\subseteq\F^d$ can be recovered from the  matrix system 
        \begin{equation*}
            \begin{bmatrix}
                F^* & G^*
            \end{bmatrix}\begin{bmatrix}
                x_0\\
                \omega
            \end{bmatrix}=\widetilde{y},
        \end{equation*}
        where $F^*$ and $G^*$ are the adjoints of the $d\times N$ matrices $F,G$ having  the vectors $f_1,\dots,f_N$ and $g_1,\dots,g_N$ as columns, respectively.
        This means that there exists an $N\times d$ matrix $U$ such that
        \begin{equation}\label{eq: matrix 33}
          \omega= U^* \begin{bmatrix}
            F^* & G^*
        \end{bmatrix}\begin{bmatrix}
            x_0 \\ \omega
        \end{bmatrix}=U^*
                \widetilde{y} . 
        \end{equation}
           Since the map $y\mapsto U^* y$ is linear, the right-hand side $U^*\widetilde{y}$
        is another linear combination of the samples $\{y_\ell(n)\}$ of the dynamical system \eqref{eq: dyn system subject to}. Hence, \eqref{eq: matrix 33} is a system of equations of the form \eqref{eq: equations with g a}. 
        Rewriting the left inequality in \eqref{eq: matrix 33}, we obtain that
        \begin{equation}\label{eq: FU GU}
            \omega=(FU)^*x_0+(GU)^*\omega \qquad \text{ for any } x_0\in \F^d, \text{ and any } \omega\in W.
        \end{equation}
        Let us denote by $g_1',\dots, g_d'$ and $f_1',\dots, f_d$ the column vectors of the $d\times d$ matrices $G^\prime=GU$ and $F^\prime=FU$, respectively. Then, by Lemma \ref{lemma aux},  for each $1\leq j\leq d$, there exist scalars $\mu_\ell^j$, ($1\leq\ell\leq L$) such that 
            $$f_j' = (A-I)g_j'+ \sum_{\ell=1}^L \mu_\ell^j b_\ell^j.$$

        We now show that $\{P_W(g_1'),\dots,P_W(g_d')\}$ spans $W$ and $f_j' \equiv 0$. Taking $x_0=\textbf{0}$ in \eqref{eq: FU GU}, we get that 
        $\omega=(GU)^*\omega$ for any $\omega \in W$ or, equivalently,
        $$P_W(v) = (GU)^*P_W(v) \qquad \forall v\in\F^d.$$
        This means that $P_WGU=PG^\prime=P_W$. Therefore,  $\{P_W(g_1'),\dots,P_W(g_d')\}$ spans $W$. That is, the vectors $g_1',\dots, g_d'$ satisfy Condition \ref{item: 1st cond} for $N=d$.
        
        Finally, to show that Condition \ref{item: 2nd cond} is satisfied, note that since for any $x_0\in \F^d$,  and any  $\omega\in W$
        $$\omega=(FU)^*x_0+(GU)^*\omega \quad \text{ and } \quad \omega=(GU)^*\omega ,$$
        we have $(FU)^*=(F^\prime)^*\equiv\textbf{0}$.  Hence, $f_j'=(A-I)g_j'+ \sum_{\ell=1}^L \mu_\ell^j b_\ell^j={\bf 0}$. Thus, letting $M$ be the $L\times d$ matrix given  by $M(\ell,j)=-\mu_\ell^j$, we  and obtain that the vectors $g_1',\dots, g_d'$ also satisfy Condition \ref{item: 2nd cond}.
        
\end{itemize}

    \end{proof}

\subsection {Proofs of Section \ref {subsubsec: not 1}} To prove Theorem \ref{coro: nec and sufi no eigenval 1}, we need the preliminary results given by Lemma    \ref {lem: inv I-A in orbit} and  Proposition \ref {remark: g when A not eigenval 1} below.
  
    \begin{lemma}\label{lem: inv I-A in orbit}
          Let $A:\F^{d}\to \F^d$ be a linear operator, let $b\in \F^d$, and consider the function $f(x)=1/(x-1)$. If the minimal $A$-annihilating polynomial $m_b(x)$ of $b$ is such that $m_b(1)\not=0$, then 
          $f(A)b$ is well-defined and   $f(A)b\in Z(A;b)$. 
          In particular, if $A$ does not have eigenvalue $1$, then, for any $b\in\F^d$, $(A-I)^{-1}b\in Z(A;b).$ 
    \end{lemma}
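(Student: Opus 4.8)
The plan is to make sense of the rational function $f(x)=1/(x-1)$ evaluated at $A$ on the vector $b$ by replacing $f$ with a suitable \emph{polynomial}, and then to observe that any polynomial in $A$ applied to $b$ automatically lies in the orbit space $Z(A;b)$. The key algebraic input is that the hypothesis $m_b(1)\neq 0$ says exactly that $(x-1)$ does not divide $m_b(x)$, and hence that $x-1$ and $m_b$ are coprime in $\F[x]$.

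First I would invoke this coprimality to produce, via B\'ezout's identity in the principal ideal domain $\F[x]$, polynomials $p,q\in\F[x]$ with
$$(x-1)\,p(x)+m_b(x)\,q(x)=1.$$
Evaluating at $A$, applying the resulting operator to $b$, and using the defining property $m_b(A)b=\mathbf{0}$ of the minimal $A$-annihilating polynomial (Definition \ref{MAPB}), this collapses to $(A-I)\,p(A)\,b=b$. I would then \emph{define} $f(A)b:=p(A)b$; this is the content of ``$f(A)b$ is well-defined.'' Since $Z(A;b)=\{r(A)b:\,r\in\F[x]\}$ by the span description in \eqref{eq: Z}, and $p(A)b$ is a polynomial in $A$ applied to $b$, we immediately obtain $f(A)b=p(A)b\in Z(A;b)$.

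For the ``in particular'' clause, I would note that $1\notin\sigma(A)$ means $1$ is not a root of the minimal polynomial $m_A$, so $m_A(1)\neq 0$; since $m_b$ divides $m_A$ (Definition \ref{MAPB}), this forces $m_b(1)\neq 0$, and the hypothesis of the first part applies. Moreover, $1\notin\sigma(A)$ makes $A-I$ invertible, and the identity $(A-I)p(A)b=b$ then yields $p(A)b=(A-I)^{-1}b$; combined with the first part, $(A-I)^{-1}b=f(A)b\in Z(A;b)$.

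I expect the only genuine subtlety to be the well-definedness of $f(A)b$: since $f$ is a rational rather than a polynomial function, one must check that the construction does not depend on the choice of the B\'ezout coefficient $p$. I would handle this by observing that the evaluation map $r\mapsto r(A)b$ has kernel equal to the ideal generated by $m_b$, so it factors through the quotient ring $\F[x]/(m_b)$, in which the class of $x-1$ is a unit precisely because $\gcd(x-1,m_b)=1$; thus the class of $1/(x-1)$ is unambiguous. Concretely, two valid choices of $p$ differ by a multiple of $m_b$, which annihilates $b$, so the apparent ambiguity disappears and the remaining steps are routine.
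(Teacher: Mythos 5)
Your proof is correct, but it follows a genuinely different route from the paper's. The paper proves this lemma in one stroke by citing Theorem \ref{th: interpol th} (Hermite interpolation for matrix functions, from \cite{higham2008functions}): since $m_b(1)\neq 0$, the function $f$ is analytic on a neighborhood of the roots of $m_b$, so $f(A)b=q(A)b$ for the Hermite interpolant $q$ of $f$ at those roots, and $q(A)b\in Z(A;b)$ because $q$ is a polynomial. You instead work purely algebraically: coprimality of $x-1$ and $m_b$ in $\F[x]$, B\'ezout's identity $(x-1)p(x)+m_b(x)q(x)=1$, and the resulting operator identity $(A-I)p(A)b=b$, with well-definedness handled by passing to $\F[x]/(m_b)$, where the class of $x-1$ is a unit. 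The two constructions in fact agree: the Hermite interpolant $q$ of $f$ on the roots of $m_b$ satisfies $(x-1)q(x)\equiv 1 \pmod{m_b}$, so it is itself a B\'ezout coefficient, and any two such coefficients differ by a multiple of $m_b$, which kills $b$. What your approach buys is self-containedness and elementarity: no appeal to analyticity or to the matrix-function machinery in the Appendix, an explicit certificate $(A-I)p(A)b=b$ that immediately yields both the membership $f(A)b\in Z(A;b)$ and the ``in particular'' clause (via $m_b\mid m_A$, so $1\notin\sigma(A)$ forces $m_b(1)\neq 0$), and an explicit treatment of well-definedness, which the paper delegates entirely to the cited theorem. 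What the paper's approach buys is generality: the interpolation theorem defines $f(A)b$ for arbitrary analytic $f$, not just for the rational function $1/(x-1)$, and the same appendix machinery is reused later (e.g., in Remark \ref{remark: projectors Jordan} and Lemma \ref{lemma: appendix}).
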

    
    \begin{proof}
        This result follows by applying Theorem \ref{th: interpol th} in the Appendix to the function $f(x)=1/(1-x)$ and its Hermite interpolating polynomial $q$ on the roots of $m_b(x)$  given by \eqref{eq: interpolating pol}. Recall that $p(A)b\in Z(A;b)$ for every polynomial $p\in\F[x]$.
    \end{proof}
For the next proposition, we use the same notation as in Theorem \ref {coro: nec suff cond}. 
      \begin{proposition}\label{remark: g when A not eigenval 1}
         Let $A:\F^{d}\to \F^d$ be a linear operator such that $1\notin \sigma (A)$, and  let $\{b_1,\dots,b_L\}$ be a set of vectors in $\F^d$. If vectors $g_1,\dots,g_T\in \F^d$ satisfy Condition \ref{item: 2nd cond} of Theorem \ref{coro: nec suff cond}, then they can be written as the columns of $G:=(A-I)^{-1}BM$. Moreover,  $g_1,\dots,g_T\in Z(A;b_1,\dots, b_L)$. 
    \end{proposition}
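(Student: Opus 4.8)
The plan is to proceed in two short steps: the first is purely algebraic and the second relies on the earlier Lemma~\ref{lem: inv I-A in orbit}.

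First I would record that the hypothesis $1\notin\sigma(A)$ makes the operator $A-I$ invertible, since $1\notin\sigma(A)$ means $\det(A-I)\neq 0$. Condition~\ref{item: 2nd cond} of Theorem~\ref{coro: nec suff cond} asserts the existence of a matrix $M\in\F^{L\times T}$ with $BM=(A-I)G$, where $G$ is the $d\times T$ matrix whose columns are $g_1,\dots,g_T$. Multiplying both sides on the left by $(A-I)^{-1}$ immediately yields $G=(A-I)^{-1}BM$, which is the first assertion. Reading this identity column by column, for each $1\le j\le T$ the vector $g_j$ equals $(A-I)^{-1}$ applied to the $j$-th column of $BM$; and the $j$-th column of $BM$ is the linear combination $\sum_{\ell=1}^L M(\ell,j)\,b_\ell$ of the observational vectors.

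The second step is to conclude that each $g_j$ lies in $Z(A;b_1,\dots,b_L)$. Using the linearity of $(A-I)^{-1}$ together with the expression above, I would write
\begin{equation*}
  g_j=(A-I)^{-1}\sum_{\ell=1}^L M(\ell,j)\,b_\ell=\sum_{\ell=1}^L M(\ell,j)\,(A-I)^{-1}b_\ell .
\end{equation*}
Here the essential input is Lemma~\ref{lem: inv I-A in orbit}: since $1\notin\sigma(A)$ and $m_{b_\ell}$ divides $m_A$, the minimal $A$-annihilating polynomial of each $b_\ell$ does not vanish at $1$, so $(A-I)^{-1}b_\ell$ is well defined and, crucially, belongs to the orbit space $Z(A;b_\ell)\subseteq Z(A;b_1,\dots,b_L)$. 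Because $Z(A;b_1,\dots,b_L)$ is a subspace, any linear combination of the vectors $(A-I)^{-1}b_\ell$ remains inside it, whence $g_j\in Z(A;b_1,\dots,b_L)$ for every $j$.

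I do not expect any genuine obstacle once Lemma~\ref{lem: inv I-A in orbit} is in hand; the only point deserving care is exactly that lemma's conclusion. A priori, applying the inverse $(A-I)^{-1}$ to $b_\ell$ could produce a vector outside the finite-dimensional orbit space $Z(A;b_\ell)$, but the lemma---obtained by Hermite-interpolating the function $x\mapsto 1/(x-1)$ by a polynomial on the roots of $m_{b_\ell}$---guarantees that $(A-I)^{-1}$ acts on $b_\ell$ as a polynomial in $A$, and therefore keeps $b_\ell$ within its own orbit. That polynomial representation is precisely what upgrades the algebraic identity $G=(A-I)^{-1}BM$ into the membership statement $g_j\in Z(A;b_1,\dots,b_L)$.
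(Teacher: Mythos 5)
Your proof is correct and takes essentially the same route as the paper: invertibility of $A-I$ (from $1\notin\sigma(A)$) gives $G=(A-I)^{-1}BM$, and Lemma \ref{lem: inv I-A in orbit} places the columns in $Z(A;b_1,\dots,b_L)$. The paper compresses this into one sentence; your column-by-column expansion and use of linearity of $(A-I)^{-1}$ merely spells out the details it leaves implicit.
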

\begin{proof}
   Since $A$ does not have eigenvalue $1$, $G=(A-I)^{-1}BM$ is well-defined and from Lemma \ref{lem: inv I-A in orbit} its columns belong to $Z(A;b_1,\dots, b_L)$.
\end{proof}

We now prove Theorem  \ref {coro: nec and sufi no eigenval 1} using the two previous results.
    \begin{proof}[Proof of Theorem \ref{coro: nec and sufi no eigenval 1}]
 The proof of the necessity of the equality \eqref{eq: suf cond no eig 1} is similar to the proof of Corollary \ref{coro: at least K} below. To prove sufficiency, assume that \eqref{eq: suf cond no eig 1} is satisfied and consider the matrix $G := (A-I)^{-1}B$. Then, clearly, \eqref{eq: nec cond 2} is satisfied, with $M$ being the $L\times L$ identity matrix. The columns of $G$ are the vectors $g_i := (A-I)^{-1}b_i$ for $i = 1, \dots, K$. By Lemma \ref{lem: inv I-A in orbit}, for each $i = 1, \dots, K$, we have that $g_i \in Z(A; b_i)$. By equality \eqref{eq: suf cond no eig 1}, $P_WG$ has rank $K$, so its columns, which are the vectors $P_Wg_i$, span $W$. Thus, from Theorem \ref{coro: nec suff cond}, the observational vectors $\{b_1, \dots, b_L\} \subset \F^d\setminus{\{\bf 0\}}$ are complete for Problem \ref{prob: general}.
        
    \end{proof}

    \begin{proof}[Proof of Corollary \ref{coro: at least K}]
        Let $G$ and $B$ be as in the statement of Theorem \ref{coro: nec suff cond}. Condition \ref{item: 1st cond} states that $rank(P_WG) = K$, which implies that $\rank(G) \geq K$. Thus, using Equation \eqref{eq: nec cond 2}, we have that $\rank(G) = \rank\big((A-I)^{-1}BM\big) \geq K$. Therefore, $\rank(B) \geq K$. Consequently, we require at least $K$ linearly independent spatial observational vectors among ${b_1, \dots, b_L}$, implying $L \geq K$.  
    \end{proof}

\begin{proof}[Proof of Corollary \ref{coro: at least as many as dim W}]
        Assume that $A-I$ is a non-singular operator in $\F^d$, and that the source term in \eqref{eq: dyn system subject to} is such that $\omega\in W$ where $\mathrm{dim}(W)=K$. One strategy for positioning  the sensors $b_i$ is the following: 
        Consider  $\{\omega_\ell\}_{\ell=1}^K$ an orthonormal basis for $W$, and then define the vectors 
        \begin{equation}\label{eq: particular sampling vect}
           b_\ell:=(I-A)\omega_\ell, \qquad \ell\in\{1,\dots, K\}.  
        \end{equation}
        For each $\ell\in\{1,\dots, K\}$, consider the scalars $\{\alpha_{n,\ell}\}_{n=0}^{r_\ell}$ that are the coefficients of  the {minimal} $A$-annihilating polynomial $m_{b_\ell}(x)=\sum_{n=0}^{r_\ell}\alpha_{n,\ell}x^n$ of $b_\ell$. Then, given the measurements 
        $$y_\ell(n)=\langle x_0, A^n b_\ell\rangle +\langle \omega, \Lambda_nb_\ell\rangle$$
        as in \eqref{data}, for each $\ell=1,\dots, K$ we  use Identity \eqref {eq: trick} and the fact that $m_{b_\ell}(A)b_{\ell}= \textbf {0}$ to get
        \begin{align*}
             \sum_{n=0}^{r_\ell}\alpha_{n,\ell} y_\ell(n)&=\langle \omega, \sum_{n=0}^{r_\ell}\alpha_{n,\ell}\Lambda_n b_\ell\rangle= m_{b_\ell}(1)\langle \omega,(I-A)^{-1}b_\ell\rangle= m_{b_\ell}(1)\langle \omega,\omega_\ell\rangle.   
        \end{align*}
        Hence, since $1 \notin\sigma(A)$, $m_{b_\ell}(1)\ne 0$, we can solve for $\langle \omega,\omega_\ell\rangle$ above and obtain 
        $$\omega=\left(\frac{1}{m_{b_1}(1)}\sum_{n=0}^{r_1}\alpha_{n,1} y_1(n)\right)\omega_1+\dots +\left(\frac{1}{m_{b_K}(1)}\sum_{n=0}^{r_K}\alpha_{n,K} y_K(n) \right)\omega_K.$$
        Moreover, if $r_j$ is the degree of the minimal $A$-annihilating polynomial of $b_j$, then the  number of time samples that are needed to find $\omega$ is $T=\max\limits_{\ell\in\{1,\dots, K\}}\{r_\ell+1\}$. 
\end{proof}

\subsection {Proofs for Section \ref {subsub: general}}
Now we will prove Theorem \ref{th: general test}. To do so we need several preliminary results and their proofs stated in Lemma \ref {lemma: lemma very auxiliar}, Theorem \ref {th: characterization of all g}, and Corollary \ref {coro: dim L} below.
\begin{lemma}\label{lemma: lemma very auxiliar}
  Using Notation  \ref {HatNotation}, we have the following.
  \begin{enumerate} 
      \item For $u(x),v(x)\in \F[x]$, we have \begin{equation}\label{eq: eq like a lemma}
            \widehat{uv}(x)=\widehat{u}(x)v(x)+u(1)\widehat{v}(x).
        \end{equation}
    \item Let $A:\F^{d}\to\F^d$ be a linear operator,   $b\in \F^d$, and $m_b$ be its minimal $A$-annihilating polynomial. If $p\in\F[x]$ is such that $p(A)b=\textbf{0}$, then there exists $a \in \F$  $$\widehat{p}(A)b=a \, \widehat{m_b}(A)b. $$
  \end{enumerate}
\end{lemma}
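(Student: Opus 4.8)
The plan is to handle the two parts in sequence, since the first is a purely algebraic identity that feeds directly into the second. For Part (1), I would establish $\widehat{uv}(x)=\widehat{u}(x)v(x)+u(1)\widehat{v}(x)$ by direct manipulation of the definition $\widehat{p}(x)=(p(x)-p(1))/(x-1)$ from Notation \ref{HatNotation}. Starting from $\widehat{uv}(x)=(u(x)v(x)-u(1)v(1))/(x-1)$, I would add and subtract $u(1)v(x)$ in the numerator and split it as
\[
\frac{(u(x)-u(1))\,v(x)}{x-1}+\frac{u(1)\,(v(x)-v(1))}{x-1},
\]
which is exactly $\widehat{u}(x)v(x)+u(1)\widehat{v}(x)$. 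Each quotient is a genuine polynomial because $x-1$ divides both $u(x)-u(1)$ and $v(x)-v(1)$, so the identity lives in $\F[x]$ as claimed.

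For Part (2), the key observation is that $m_b$ divides $p$. This follows from minimality: writing $p=q\,m_b+r$ by the division algorithm with $\deg r<\deg m_b$, one has $r(A)b=p(A)b-q(A)m_b(A)b=\textbf{0}$, so $r$ is an $A$-annihilating polynomial of $b$ of degree strictly less than $\deg m_b$, which forces $r\equiv 0$. Hence $p=s\,m_b$ for some $s\in\F[x]$. Applying Part (1) with $u=s$ and $v=m_b$ gives
\[
\widehat{p}(x)=\widehat{s\,m_b}(x)=\widehat{s}(x)\,m_b(x)+s(1)\,\widehat{m_b}(x).
\]
Evaluating at $A$ and applying to $b$, the first term becomes $\widehat{s}(A)\,m_b(A)b$, which vanishes since $m_b(A)b=\textbf{0}$. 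What survives is $\widehat{p}(A)b=s(1)\,\widehat{m_b}(A)b$, so the claim holds with $a=s(1)$.

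I do not anticipate a genuine obstacle, as both steps are elementary; the only point requiring care is the order of factorization in Part (1). One must write $p=s\,m_b$ and take $u=s$, $v=m_b$, so that the $m_b$ factor sits to the right in the term $\widehat{s}(x)\,m_b(x)$ and thereby annihilates $b$ after substituting $A$. Had I instead used $u=m_b$, $v=s$, the surviving product $\widehat{m_b}(A)\,s(A)b=s(A)\,\widehat{m_b}(A)b$ would not obviously reduce to a \emph{scalar} multiple of $\widehat{m_b}(A)b$, so the stated conclusion would not follow directly. With the correct ordering the proof is immediate.
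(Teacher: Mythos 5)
Your proof is correct and takes essentially the same approach as the paper's: factor $p = q\,m_b$ (you justify the divisibility via the division algorithm, which the paper asserts from minimality), apply Part (1) with $u=q$, $v=m_b$, and use $m_b(A)b=\mathbf{0}$ to conclude with $a=q(1)$. Your explicit add-and-subtract manipulation for Part (1) is precisely the ``straightforward computation'' the paper leaves to the reader, and your closing remark about the order of factorization is a sound observation rather than a deviation.
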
    
\begin{proof}
    Identity \eqref {eq: eq like a lemma} can be established by a straightforward computation.  
    
    To prove Part (2) of the Lemma, we note that, since by hypothesis $p$ is an $A$-annihilating polynomial of $b$, $m_b$ must divide $p$.  Thus, there exists $q\in\F[x]$ such that $p(x)=q(x)m_b(x)$. Then, using Part (1) of the lemma with $q=u$,  $m_b=v$,  using the fact that $m_b(A)b=\textbf{0}$, and setting $a=q(1)$ we get
    \begin{equation*}
        \widehat{p}(A)b=\widehat{q}(A)m_b(A)b+q(1)\widehat{m_b}(A)b=q(1)\widehat{m_b}(A)b.
    \end{equation*}
\end{proof}

\begin{definition}\label{def: general admissible vectors}
    Let $A:\F^{d}\to\F^d$ be a linear operator and let $b_1,\dots,b_L\in \F^d$.  We say that  $g\in \F^d$ is an \textit{  $A$-$\{b_1,\dots,b_L\}$ characteristic vector}  if  $g\in Z(A;b_1,\dots, b_L)$ and satisfies
    \begin{equation}\label{eq: condition for g}
     (A-I)g=\sum_{\ell=1}^L\mu_\ell b_\ell   
    \end{equation}
    for some scalars $\mu_1,\dots,\mu_L$. In other words, $g$ is $A$-$\{b_1,\dots,b_L\}$ characteristic  if it belongs to the pre-image 
    $$\mathcal{G}:=\{g\in Z(A;b_1,\dots, b_L): (A-I)g \in span \{b_1,\dots, b_l\}\}.$$
The space $\mathcal G$ will be called    the \textit{$A$-$\{b_1,\dots, b_l\}$ characteristic space of the observational space $span \{b_1,\dots, b_l\}$.} \end{definition}

Inspired by the \textit{Cyclic Decomposition Theorem}, and using Definition \ref {AC} of $A$-conductors, we state and prove the following result.

\begin{theorem}\label{th: characterization of all g}
    Let $A:\F^{d}\to\F^d$ be a linear operator and let $b_1,\dots,b_L\in \F^d$ be non-trivial vectors.
    For each $1\leq j\leq L$ consider $\kappa_j\in \F[x]$ the minimal $A$-conductor polynomial of $b_j$ into the observational orbits space  $V_j:=\sum_{i=1}^{j-1}Z(A;b_i)$ (where $V_1:=\{\textbf{0}\}$, and $\kappa_1:=m_{b_1}$).
    Also, for each $2\leq j\leq L$, let $q_j^i\in\F[x]$, for $1\leq i\leq j-1$, be such that 
    \begin{equation*}
     \kappa_j(A)b_j=q_j^1(A)b_1+\dots+q_j^{j-1}(A)b_{j-1}.   
    \end{equation*}
    Then, $g$ is an $A$-$\{b_1,\dots,b_L\}$ characteristic vector, if and only if it is of the form
    \begin{equation}\label{eq: g in general}
      g=\sum_{j=1}^L a_j\left(\widehat{\kappa_j}(A)b_j-\sum_{i=1}^{j-1}\widehat{q_j^i}(A)b_i\right)  
    \end{equation}
    for some scalars $a_1,\dots ,a_L\in\F$.
\end{theorem}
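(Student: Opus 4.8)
The plan is to prove the two inclusions $\spn\{g_1,\dots,g_L\}\subseteq\mathcal{G}$ and $\mathcal{G}\subseteq\spn\{g_1,\dots,g_L\}$ separately, where $\mathcal{G}$ is the $A$-$\{b_1,\dots,b_L\}$ characteristic space of Definition \ref{def: general admissible vectors} and $g_j$ are the vectors in \eqref{eq: g in general}. The forward inclusion is a direct computation: each $g_j$ obviously lies in $Z(A;b_1,\dots,b_j)\subseteq Z(A;b_1,\dots,b_L)$, so I only need that $(A-I)g_j\in\spn\{b_1,\dots,b_L\}$. Using the defining relation $(x-1)\widehat{p}(x)=p(x)-p(1)$ from Notation \ref{HatNotation}, i.e. $(A-I)\widehat{p}(A)=p(A)-p(1)I$, together with the relation $\kappa_j(A)b_j=\sum_{i=1}^{j-1}q_j^i(A)b_i$, I would compute
\[
(A-I)g_j=-\kappa_j(1)b_j+\sum_{i=1}^{j-1}q_j^i(1)b_i,
\]
which manifestly lies in $\spn\{b_1,\dots,b_L\}$; hence $g_j\in\mathcal{G}$ and the forward inclusion follows.

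For the reverse inclusion I would induct on $L$. The heart of the argument is a conductor version of Lemma \ref{lemma: lemma very auxiliar}(2). Given $g\in\mathcal{G}$, write $g=h+p_L(A)b_L$ with $h\in V_L=\sum_{i=1}^{L-1}Z(A;b_i)$, and write $(A-I)g=\sum_{\ell=1}^L\mu_\ell b_\ell$. Since $b_1,\dots,b_{L-1}\in V_L$ and $V_L$ is $A$-invariant, reducing modulo $V_L$ gives $\bigl[(x-1)p_L(x)-\mu_L\bigr](A)b_L\in V_L$, so by Definition \ref{AC} the conductor $\kappa_L$ divides $P_L(x):=(x-1)p_L(x)-\mu_L$. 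The key point is that $\widehat{P_L}=p_L$, exactly the computation $\widehat{P_L}(x)=\frac{(x-1)p_L(x)-\mu_L+\mu_L}{x-1}=p_L(x)$. Writing $P_L=s\kappa_L$ and applying Lemma \ref{lemma: lemma very auxiliar}(1) in the form $\widehat{P_L}=\widehat{s}\,\kappa_L+s(1)\widehat{\kappa_L}$, together with $\kappa_L(A)b_L\in V_L$ and $A$-invariance of $V_L$, I obtain $p_L(A)b_L\equiv s(1)\widehat{\kappa_L}(A)b_L\pmod{V_L}$. Setting $a_L:=s(1)$ this yields $g-a_Lg_L\in V_L=Z(A;b_1,\dots,b_{L-1})$. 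The base case $L=1$ is precisely this computation with $V_1=\{\textbf{0}\}$ and $\kappa_1=m_{b_1}$, where the congruence becomes an equality and $g=a_1g_1$.

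It then remains to check that $g-a_Lg_L$ is characteristic for the smaller set $\{b_1,\dots,b_{L-1}\}$, so the induction hypothesis gives $g-a_Lg_L\in\spn\{g_1,\dots,g_{L-1}\}$ and hence $g\in\spn\{g_1,\dots,g_L\}$; note that the $g_j$ for $j<L$ are defined identically for both the set $\{b_1,\dots,b_{L-1}\}$ and the set $\{b_1,\dots,b_L\}$, so the hypothesis applies verbatim. When $b_L\notin V_L$ this is immediate: since $g-a_Lg_L\in V_L$ we get $(A-I)(g-a_Lg_L)\in V_L$, and it also lies in $\spn\{b_1,\dots,b_L\}$; because $b_L\notin V_L$ one checks $\spn\{b_1,\dots,b_L\}\cap V_L=\spn\{b_1,\dots,b_{L-1}\}$, giving exactly the characteristic condition for the smaller set.

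I expect the degenerate case $b_L\in V_L$ (equivalently $\kappa_L=1$, whence $g_L\in V_L$) to be the main obstacle, since there $\spn\{b_1,\dots,b_L\}\cap V_L$ need not equal $\spn\{b_1,\dots,b_{L-1}\}$ and the quotient argument degenerates. In that case I would instead use the forward-direction formula $(A-I)g_L=-b_L+\sum_{i=1}^{L-1}q_L^i(1)b_i$ to choose $a_L:=-\mu_L$, so that the $b_L$-term of $(A-I)g$ is absorbed and $(A-I)(g-a_Lg_L)\in\spn\{b_1,\dots,b_{L-1}\}$, after which the induction hypothesis again applies. The remaining delicate point is the bookkeeping needed to extract the scalar $\mu_L$ correctly modulo $V_L$ when the $b_\ell$ are linearly dependent; once the quotient/conductor reformulation of Lemma \ref{lemma: lemma very auxiliar}(2) is in place, the algebra is routine.
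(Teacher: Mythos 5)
Your proof is correct, and while it rests on the same two algebraic facts as the paper's proof (the conductor $\kappa_L$ divides $(x-1)p_L(x)-\mu_L$, and the product rule \eqref{eq: eq like a lemma}), its organization is genuinely different. The paper proves the reverse inclusion by a single global peeling computation: write $g=\sum_j p_j(A)b_j$, use $\kappa_L$ to factor $(x-1)p_L-\mu_L=\alpha_L\kappa_L$, substitute the correction terms $\alpha_Lq_L^i$ back into the defining identity, repeat down to $p_1$, and only at the end reassemble $g$ via Lemma \ref{lemma: lemma very auxiliar}; moreover it carries this out explicitly only for $L=1,2,3$ and asserts the general case ``follows recursively.'' You instead run a genuine induction on $L$: the engine is your conductor generalization of Lemma \ref{lemma: lemma very auxiliar}(2) --- since $\widehat{P_L}=p_L$ for $P_L:=(x-1)p_L-\mu_L$, divisibility $\kappa_L\mid P_L$ gives $p_L(A)b_L\equiv s(1)\widehat{\kappa_L}(A)b_L \pmod{V_L}$ --- and the inductive step subtracts $a_Lg_L$ and checks that the difference is characteristic for $\{b_1,\dots,b_{L-1}\}$. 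The two routes even make the same choice of scalar: evaluating $\alpha_L\kappa_L=(x-1)p_L-\mu_L$ at $x=1$ shows the paper's $a_L=\alpha_L(1)$ equals your $s(1)$ generically and equals $-\mu_L$ when $\kappa_L\equiv1$, which is exactly your degenerate-case choice. What your organization buys is a rigorous treatment of arbitrary $L$ (the induction hypothesis is a black box, so no bookkeeping of nested correction terms) together with an explicit isolation of the case $b_L\in V_L$, where your intersection identity $\mathrm{span}\{b_1,\dots,b_L\}\cap V_L=\mathrm{span}\{b_1,\dots,b_{L-1}\}$ fails and the choice $a_L=-\mu_L$ is genuinely needed; the cost is precisely that extra verification step, which the paper's all-at-once substitution sidesteps because its reduced identity automatically has the right form at the next index. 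Your closing worry about extracting $\mu_L$ when the $b_\ell$ are linearly dependent is not an actual gap: you only ever need to fix one representation $(A-I)g=\sum_\ell\mu_\ell b_\ell$ and run the argument with it, and nothing in either case uses uniqueness of the $\mu_\ell$.
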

   
\begin{proof}
 First, let us prove that the scalars $a_1,\dots,a_L$ in \eqref{eq: g in general} can be chosen arbitrarily, i.e.,  that any vector $g$ of the form \eqref{eq: g in general} is an $A$-$\{b_1,\dots,b_L\}$ characteristic vector. Indeed, the expression \eqref{eq: g in general} defines a vector in $Z(A;b_1,\dots,b_L)$ as it is a linear combination of polynomials on $A$ applied to the vectors $b_1,\dots,b_L$. Also, by using that $\kappa_j(A)b_j=\sum_{i=1}^{j-1}q_j^i(A)b_i$, for $2\leq j\leq L$,
        \begin{align*}
            (A-I)g&=(A-I)\left(a_1\widehat{m_{b_1}}(A)b_1+\sum_{j=2}^L a_j\left(\widehat{\kappa_j}(A)b_j-\sum_{i=1}^{j-1}\widehat{q_j^i}(A)b_i\right)\right)\\
            &=a_1(m_{b_1}(A)-m_{b_1}(1))b_1+\sum_{j=2}^L a_j\left((\kappa_j(A)-\kappa_j(1))b_j-\sum_{i=1}^{j-1}(q_j^i(A)-q_j^i(1))b_i\right)\\
            &=a_1 m_{b_1}(1)b_1+\sum_{j=2}^L a_j\left(\sum_{i=1}^{j-1}q_j^i(1)-\kappa_j(1)\right)b_j,
        \end{align*}
        That is, if $g$ is given by \eqref{eq: g in general} for some scalars $a_1,\dots,a_L$, then $g$ satisfies \eqref{eq: nec cond 2} with $\mu_1:=a_1 m_{b_1}(1)$, and $\mu_j=a_j\left(\sum_{i=1}^{j-1}q_j^i(1)-\kappa_j(1)\right)$ for $2\leq j\leq L$.

Now, given  an $A$-$\{b_1,\dots,b_L\}$ characteristic vector $g$, we will prove that $g$ can be written in the form \eqref{eq: g in general}. As $g\in Z(A;b_1,\dots,b_L)$, it can be written as
\begin{equation}\label{eq: g eq 1}
  g=p_1(A)b_1+p_2(A)b_2+\dots+p_L(A)b_L  
\end{equation}
for some $p_1,p_2,\dots,p_L\in\F[x]$. Also, there exist $\mu_1,\dots,\mu_L\in\F$ such that
$(A-I)g=\sum_{\ell=1}^L\mu_\ell b_\ell$ or, equivalently, 
\begin{equation}\label{eq: g eq 2}
    \sum_{\ell=1}^L \left((A-I)p_\ell(A)-\mu_\ell\right)b_\ell=\textbf{0}.
\end{equation}
To illustrate how to characterize the polynomials $p_1,\dots,p_L$ satisfying \eqref{eq: g eq 2} for some scalars $\mu_1,\dots,\mu_L$,  we will proceed by cases. 
    \begin{enumerate}
        \item($L=1$): In this case the expression \eqref{eq: g eq 1} reads as $g=p_1(A)b_1$ and the identity \eqref{eq: g eq 2} reads as $((A-I)p_1(A)-\mu_1)b_1=\textbf{0}$. Thus, $m_{b_1}$ divides the polynomial $(x-1)p_1(x)-\mu_1$. That is, there exists a polynomial $\alpha_1(x)\in\F[x]$ such that 
        $$\alpha_1(x)m_{b_1}(x)=(x-1)p_1(x)-\mu_1.$$
        Evaluating that expression at $x=1$, we obtain $\mu_1=\alpha_1(1)m_{b_1}(1)$, and so
        \begin{equation*}
            p_1(x)=\frac{\alpha_1(x)m_{b_1}(x)-\alpha_1(1)m_{b_1}(1))}{x-1}.
        \end{equation*}
        Finally, by using Lemma \ref{lemma: lemma very auxiliar},  the fact that $m_{b_1}(A)b_1=0$, and denoting $a_1=\alpha_1(1)$, we have that
        \begin{equation*}
            g=a_1 \, \widehat{m_{b_1}}(A)b_1.
        \end{equation*}
        This concludes the proof for this case since in the statement of the theorem $\kappa_1:=m_{b_1}$.
        
        \item ($L=2$): In this case the expression \eqref{eq: g eq 1} reads as 
        \begin{equation}\label{eq: g eq 1 case 2 vect}
          g=p_1(A)b_1+p_2(A)b_2  
        \end{equation}
        and the identity \eqref{eq: g eq 2} reads as
        \begin{equation}\label{eq: aux case 2 vect in general}
            ((A-I)p_1(A)-\mu_1)b_1+((A-I)p_2(A)-\mu_2)b_2=\textbf{0}.
        \end{equation} 
        From the last expression we can deduce that $((A-I)p_2(A)-\mu_2)b_2\in Z(A;b_1)\cap Z(A;b_2)$.
        Let $\kappa_2\in\F[x]$ be the generator of polynomial ideal  $\mathbb{I}_2:=\{k\in\F[x]: \, k(A)b_2\in  Z(A;b_1)\}$. Then,
        $\kappa_2$ divides the polynomial $(x-1)p_2(x)-\mu_2$. That is, there exists a polynomial $\alpha_2(x)\in \F[x]$ such that
        \begin{equation}\label{eq: p2 to replace}
            \alpha_2(x)\kappa_2(x)=(x-1)p_2(x)-\mu_2
        \end{equation}        
        Evaluating  the expression above at $x=1$  and solving for $\mu_2$, we obtain
        \begin{equation}\label{eq: p2}
            p_2(x)=\frac{\alpha_2(x)\kappa_2(x)-\alpha_2(1)\kappa_2(1)}{x-1}=:\widehat{\alpha_2\kappa_2}(x).
        \end{equation}
        Replacing \eqref{eq: p2 to replace} into \eqref{eq: aux case 2 vect in general} and taking into account that $\kappa_2(A)b_2=q_2^1(A)b_1$,  we get
        \begin{align*}
            \textbf{0}&=
            ((A-I)p_1(A)-\mu_1)b_1+\alpha_2(A)\kappa_2(A)b_2 \\
            &=\left((A-I)p_1(A)-\mu_1+\alpha_2(A)q_2^1(A)\right)b_1.
        \end{align*}
        Hence, $m_{b_1}$ divides the polynomial $(x-1)p_1(x)-\mu_1+\alpha_2(x)q_2^1(x)$, that is, there exists $\alpha_1(x)\in\F[x]$ such that
        $$\alpha_1(x)m_{b_1}(x)=(x-1)p_1(x)-\mu_1+\alpha_2(x)q_2^1(x).$$
        Evaluating at $x=1$,  we can solve for the value of $\mu_1$ and  obtain
        \begin{equation}\label{eq: p1}
            p_1(x)=\frac{\alpha_1(x)m_{b_1}(x)-\alpha_1(1)m_{b_1}(1)}{x-1}-\frac{\alpha_2(x)q_2^1(x)-\alpha_2(1)q_2^1(1)}{x-1}=\widehat{\alpha_1 m_{b_1}}(x)-\widehat{\alpha_2 q_2^1}(x).
        \end{equation}
        Now, we replace \eqref{eq: p1} and \eqref{eq: p2} into \eqref{eq: g eq 1 case 2 vect} and get
        \begin{equation}\label{eq: g intermediate step}
            g=\widehat{\alpha_1 m_{b_1}}(A)b_1-\widehat{\alpha_2 q_2^1}(A)b_1+\widehat{\alpha_2 \kappa_2}(A)b_2.
        \end{equation}
        Using Lemma  \ref {lemma: lemma very auxiliar},  the fact that $m_{b_1}(A)b_1=0$, we have the identity
        \begin{equation}\label{eq: aux to replace 1}
            \widehat{\alpha_1 m_{b_1}}(A)b_1=\alpha_1(1)\widehat{ m_{b_1}}(A)b_1.
        \end{equation}
        Using Lemma \ref {lemma: lemma very auxiliar} again, we rewrite the polynomials $\widehat{\alpha_2 q_2^1}$ and $\widehat{\alpha_2 \kappa_2}$ as
        \begin{align*}
            \widehat{\alpha_2 q_2^1}(x)
            &=\widehat{\alpha_2}(x)q_2^1(x)+\alpha_2(1)\widehat{q_2^1}(x) \\ 
             \widehat{\alpha_2 \kappa_2}(x)&=\widehat{\alpha_2}(x)\kappa_2(x)+\alpha_2(1)\widehat{\kappa_2}(x),
        \end{align*}
        and use the fact that $\kappa_2(A)b_2=q_2^1(A)b_1$ to obtain 
        \begin{align}\label{eq: aux to replace}
            -\widehat{\alpha_2 q_2^1}(A)b_1+\widehat{\alpha_2 \kappa_2}(A)b_2=-\alpha_2(1)\widehat{q_2^1}(A)b_1+\alpha_2(1)\widehat{\kappa_2}(A)b_2.
        \end{align}
         Finally, by denoting $\kappa_1:=m_{b_1}$, $a_1:=\alpha_1(1)$ and $a_2:=\alpha_2(1)$, we use \eqref{eq: aux to replace 1} and \eqref{eq: aux to replace} into \eqref{eq: g intermediate step} to  obtain
        \begin{align*}
            g=a_1 \, \widehat{\kappa_1}(A)b_1+a_2\left(\widehat{\kappa_2}(A)b_2-\widehat{q_2^1}(A)b_1\right).
        \end{align*}
        \item ($L=3$): In this case the expression \eqref{eq: g eq 1} reads as 
        \begin{equation}\label{eq: g eq 1 case 3 vect}
          g=p_1(A)b_1+p_2(A)b_2  +p_3(A)b_3
        \end{equation}
        and the identity \eqref{eq: g eq 2} reads as
        \begin{equation}\label{eq: aux case 3 vect in general}
            ((A-I)p_1(A)-\mu_1)b_1+((A-I)p_2(A)-\mu_2)b_2+((A-I)p_3(A)-\mu_3)b_3=\textbf{0}.
        \end{equation} 
        From the last expression we can deduce that $((A-I)p_3(A)-\mu_3)b_3\in Z(A;b_1)+ Z(A;b_2)$.
        Let $\kappa_3\in\F[x]$ be the generator of polynomial ideal  $\mathbb{I}_3:=\{k\in\F[x]: \, k(A)b_3\in  Z(A;b_1)+Z(A;b_2)\}$. Then,
        $\kappa_3$ divides the polynomial $(x-1)p_3(x)-\mu_3$. That is, there exists a polynomial $\alpha_3(x)\in \F[x]$ such that
        \begin{equation}\label{eq: p3 to replace}
            \alpha_3(x)\kappa_3(x)=(x-1)p_3(x)-\mu_3
        \end{equation}        
        Similarly to the previous cases, we can obtain
        \begin{equation}\label{eq: p3}
            p_3(x)=\frac{\alpha_3(x)\kappa_3(x)-\alpha_3(1)\kappa_3(1)}{x-1}=:\widehat{\alpha_3\kappa_3}(x).
        \end{equation}
        Replacing \eqref{eq: p3 to replace} into \eqref{eq: aux case 3 vect in general}, and using that $\kappa_3(A)b_3=q_3^1(A)b_1+q_3^2(A)b_2$, we get
        \begin{align}\label{eq: auxx}
            ((A-I)p_1(A)-\mu_1+\alpha_3(A)q_3^1(A))b_1+            ((A-I)p_1(A)-\mu_1+\alpha_3(A)q_3^2(A))b_2=\textbf{0}.
        \end{align}
        From the last expression we can deduce that $((A-I)p_2(A)-\mu_2+\alpha_3(A)q_3^2(A))b_2\in Z(A;b_1)$.
        Let $\kappa_2\in\F[x]$ be the generator of polynomial ideal  $\mathbb{I}_2:=\{k\in\F[x]: \, k(A)b_2\in  Z(A;b_1)\}$. Then,
        $\kappa_2$ divides the polynomial $(x-1)p_2(x)-\mu_2+\alpha_3(x)q_3^2(x)$. That is, there exists a polynomial $\alpha_2(x)\in \F[x]$ such that
        \begin{equation}\label{eq: p2 to replace case 3}
            \alpha_2(x)\kappa_2(x)=(x-1)p_2(x)-\mu_2+\alpha_3(x)q_3^2(x).
        \end{equation}        
        As before, we can deduce
        \begin{equation}\label{eq: p2 case 3}
            p_2(x)=\frac{\alpha_2(x)\kappa_2(x)-\alpha_2(1)\kappa_2(1)}{x-1}-\frac{\alpha_3(x)q_3^2(x)-\alpha_3(1)q_3^2(1)}{x-1}=\widehat{\alpha_2\kappa_2}(x)-\widehat{\alpha_3 q_3^2}(x).
        \end{equation}
        Replacing \eqref{eq: p3 to replace} and \eqref{eq: p2 to replace case 3} into \eqref{eq: auxx}, and using that $\kappa_2(A)b_2=q_2^1(A)b_1$, we get
        \begin{align*}
            ((A-I)p_1(A)-\mu_1+\alpha_2(A)q_2^1(A)+\alpha_3(A)q_3^1(A))b_1=\textbf{0}.
        \end{align*}     
        Hence, $m_{b_1}$ divides the polynomial $(x-1)p_1(x)-\mu_1+\alpha_2(x)q_2^1(x)+\alpha_3(x)q_3^1(x)$, that is, there exists $\alpha_1(x)\in\F[x]$ such that
        \begin{equation}\label{eq: }
          \alpha_1(x)m_{b_1}(x)=(x-1)p_1(x)-\mu_1+\alpha_2(x)q_2^1(x)+\alpha_3(x)q_3^1(x).  
        \end{equation}
        Once again, evaluating at $x=1$, we can solve for the value of $\mu_1$ and then obtain
        \begin{align}\label{eq: p1 case 3}
            p_1(x)&=\frac{\alpha_1(x)m_{b_1}(x)-\alpha_1(1)m_{b_1}(1)}{x-1}-\frac{\alpha_2(x)q_2^1(x)-\alpha_2(1)q_2^1(1)}{x-1}\notag\\
            &\quad -\frac{\alpha_3(x)q_3^1(x)-\alpha_3(1)q_3^1(1)}{x-1}\notag\\
            &=\widehat{\alpha_1 m_{b_1}}(x)-\widehat{\alpha_2 q_2^1}(x)-\widehat{\alpha_3 q_3^1}(x).
        \end{align}
        Now, we replace \eqref{eq: p1 case 3} and \eqref{eq: p2 case 3} into \eqref{eq: g eq 1 case 3 vect} and get
        \begin{align}\label{eq: g intermediate step case 3}
            g&=\left(\widehat{\alpha_1 m_{b_1}}(A)-\widehat{\alpha_2 q_2^1}(A)-\widehat{\alpha_3 q_3^1}(A)\right)b_1+\left(\widehat{\alpha_2 \kappa_2}(A)-\widehat{\alpha_3 q_3^2}(A)\right)b_2+\widehat{\alpha_3 \kappa_3}(A)b_3\notag\\
            &=\underbrace{\widehat{\alpha_1 m_{b_1}}(A)b_1}_{(I)}+\underbrace{\left(\widehat{\alpha_2 \kappa_2}(A)b_2-\widehat{\alpha_2 q_2^1}(A)b_1\right)}_{(II)}+\underbrace{\left(\widehat{\alpha_3 \kappa_3}(A)b_3- \widehat{\alpha_3 q_3^1}(A)b_1-\widehat{\alpha_3 q_3^2}(A)b_2\right)}_{(III)}.
        \end{align}
         By applying  Lemma \ref {lemma: lemma very auxiliar} on the polynomials that appear in the terms $(I)$, $(II)$ and $(III)$ of \eqref{eq: g intermediate step case 3}, together with the identities $\kappa_2(A)b_2=q_2^1(A)b_1$ and $\kappa_3(A)b_3=q_3^1(A)b_1+q_3^2(A)b_2$, we can reach,
        \begin{align*}
            &(I)=\alpha_1(1)\widehat{m_{b_1}}(A)b_1;\\
            &(II)=\cancel{\widehat{\alpha_2}(A) \kappa_2(A)b_2}+\alpha_2(1) \widehat{\kappa_2(A)}b_2-\cancel{\widehat{\alpha_2}(A) q_2^1(A)b_1}-\alpha_2(1) \widehat{q_2^1}(A)b_1;\\
            &(III)=\cancel{\widehat{\alpha_3}(A) \kappa_3(A)b_3}+\alpha_3(1) \widehat{\kappa_3}(A)b_3\\
            &\qquad -\left( \cancel{\widehat{\alpha_3}(A) q_3^1(A)b_1}+\alpha_3(1) \widehat{q_3^1}(A)b_1+\cancel{\widehat{\alpha_3}(A) q_3^2(A)b_2}+\alpha_3(1)\widehat{ q_3^2}(A)b_2\right).
        \end{align*}
        
        Finally, using the notation $\kappa_1:=m_{b_1}$, $a_1:=\alpha_1(1)$ $a_2:=\alpha_2(1)$, and  $a_3:=\alpha_3(1)$, we  obtain
        \begin{align*}
            g=a_1\widehat{\kappa_1}(A)b_1+a_2\left(\widehat{\kappa_2}(A)b_2-\widehat{q_2^1}(A)b_1\right)+a_3\left(\widehat{\kappa_3}(A)b_3-\widehat{q_3^2}(A)b_2-\widehat{q_3^1}(A)b_1\right).
        \end{align*}
        \item ($L\geq 3$): Follows recursively mimicking the previous cases.
    \end{enumerate}
\end{proof}

\begin{remark}\label{remark: span of obs space}
    In Theorem \ref{th: characterization of all g}, once we fix the minimal $A$-conductor polynomials $\kappa_j$, the polynomials $q_j^i$ satisfying \eqref{eq: q polynomials}   are not necessarily unique. Indeed, for any choice of  $q_j^i\in\F[x]$ ($2\leq j\leq L$, $1\leq i\leq j-1$) satisfying \eqref{eq: q polynomials}, we can build the vectors in \eqref{eq: characteristic vectors general case}, i.e.,
    \begin{equation*}\label{eq: generators of g}
       g_j=\widehat{\kappa_j}(A)b_j-\sum_{i=1}^{j-1}\widehat{q_j^i}(A)b_i \qquad \text{ for } 1\leq j\leq L. 
    \end{equation*}  
    Using  formula \eqref{eq: g in general}, the $A$-$\{b_1,\dots, b_L\}$ characteristic space $\mathcal{G}$ (see Definition \ref {def: general admissible vectors}) is spanned by the set $\{g_1,\dots,g_L\}$. Hence, $\mathrm{dim}(\mathcal{G})\leq L$. However, if the spatial observational vectors  $b_1,\dots,b_L$ are linearly independent, then $\mathrm{dim}(\mathcal{G})= L$, as stated in the next corollary.
\end{remark}

\begin{corollary}\label{coro: dim L}
    Let $A:\F^{d}\to\F^d$ be a linear operator and let $b_1,\dots,b_L$ be linearly independent vectors in $\F^d$. Then the $A$-$\{b_1,\dots, b_L\}$ characteristic space $\mathcal{G}$ (see Definition \ref {def: general admissible vectors}) has dimension $L$.
\end{corollary}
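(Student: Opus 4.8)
The plan is to avoid the explicit generators $g_1,\dots,g_L$ altogether and instead compute $\dim\mathcal{G}$ directly from the defining description in Definition \ref{def: general admissible vectors}. Write $Z:=Z(A;b_1,\dots,b_L)$ and $B:=\spn\{b_1,\dots,b_L\}$, so that $\mathcal{G}=\{g\in Z:\,(A-I)g\in B\}$. First I would record two elementary facts: $Z$ is $A$-invariant (it is spanned by the vectors $A^nb_\ell$), and $B\subseteq Z$ (each $b_\ell=A^0b_\ell$ lies in $Z$). Consequently the restriction $S:=(A-I)|_Z$ is an endomorphism of $Z$, and $\mathcal{G}=S^{-1}(B)$ is precisely the preimage of the subspace $B$ under $S$.

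Next I would apply the standard preimage-dimension identity: for a linear endomorphism $S$ of a finite-dimensional space $Z$ and a subspace $B\subseteq Z$, one has $\dim S^{-1}(B)=\dim\ker S+\dim\bigl(B\cap \im S\bigr)$, since $S$ carries $S^{-1}(B)$ onto $B\cap\im S$ with kernel $\ker S$. Feeding in rank--nullity, $\dim\im S=\dim Z-\dim\ker S$, together with Grassmann's identity $\dim(B\cap\im S)=\dim B+\dim\im S-\dim(B+\im S)$, the terms $\dim\ker S$ cancel and I obtain
\begin{equation*}
\dim\mathcal{G}=\dim B+\dim Z-\dim\bigl(B+\im S\bigr).
\end{equation*}

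The decisive step is then to prove $B+\im S=Z$, i.e.\ $\spn\{b_1,\dots,b_L\}+(A-I)Z=Z$. Since $B$ and $(A-I)Z$ are both contained in $Z$, it suffices to show every spanning vector $A^nb_\ell$ of $Z$ lies in $B+(A-I)Z$; this follows by induction on $n$ from the telescoping identity $A^nb_\ell=(A-I)\bigl(A^{n-1}b_\ell\bigr)+A^{n-1}b_\ell$, the base case $n=0$ being $b_\ell\in B$. Granting this, $\dim(B+\im S)=\dim Z$, so $\dim\mathcal{G}=\dim B$; and since $b_1,\dots,b_L$ are linearly independent, $\dim B=L$, which is the claim.

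I expect the only genuine content to be the reduction to $B+(A-I)Z=Z$ together with its one-line telescoping verification; everything else is bookkeeping. Note that this argument in fact yields $\dim\mathcal{G}=\dim\spn\{b_1,\dots,b_L\}$ without any independence hypothesis, recovering (and sharpening) the bound $\dim\mathcal{G}\le L$ of the preceding remark. An alternative route, closer to the surrounding machinery, would instead prove directly that the vectors $g_1,\dots,g_L$ in \eqref{eq: characteristic vectors general case} are linearly independent, peeling off the highest index $m$ with a nonzero coefficient and using the minimality of $\kappa_m$; there the main obstacle is the degenerate case $b_j\in V_j$, where $\kappa_j\equiv 1$ and $\widehat{\kappa_j}\equiv 0$, so the leading-term argument collapses. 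The dimension count above sidesteps this difficulty entirely, which is why I would prefer it.
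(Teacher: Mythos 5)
Your proof is correct, and it takes a genuinely different route from the paper's. The paper first establishes the explicit parametrization of $\mathcal{G}$ (Theorem \ref{th: characterization of all g}), concluding that the specific vectors $g_1,\dots,g_L$ of \eqref{eq: characteristic vectors general case} span $\mathcal{G}$, and then proves by induction that these vectors are linearly independent whenever the $b_\ell$ are, splitting into the cases $\kappa_\ell\not\equiv 1$ (a degree/minimality argument with the conductor) and $\kappa_\ell\equiv 1$ (applying $A-I$ to a putative dependence relation and isolating the coefficient of $b_\ell$) --- exactly the case split you flagged as the obstacle to the ``peel off the leading term'' strategy. Your argument bypasses all of the polynomial machinery: viewing $\mathcal{G}=S^{-1}(B)$ with $S=(A-I)|_Z$, the preimage-dimension identity plus the telescoping observation $B+(A-I)Z=Z$ give $\dim\mathcal{G}=\dim B$ outright; each step checks out ($Z$ is $A$-invariant, $\ker S\subseteq S^{-1}(B)$, and $S$ maps $S^{-1}(B)$ onto $B\cap\im S$). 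Your version is shorter, more elementary, and strictly more general, since it computes $\dim\mathcal{G}=\dim\spn\{b_1,\dots,b_L\}$ with no independence hypothesis; it also suffices for the downstream use in Corollary \ref{coro: at least K general}. What it does not provide, and what the paper's proof delivers as a byproduct, is the certification that the concrete vectors $g_1,\dots,g_L$ form a \emph{basis} of $\mathcal{G}$; that constructive information is what the completeness test of Theorem \ref{th: general test} is phrased in terms of, so the paper's heavier argument is doing double duty rather than being wasteful.
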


\begin{proof}
    From Remark \ref{remark: span of obs space}, we already know that $\mathrm{dim}(\mathcal{G})\leq L$.

    The polynomials $\kappa_j$, for $1\leq j\leq L$, are unique and completely determined (see the statement of Theorem \ref{th: characterization of all g}). However, the polynomials $q_{j}^i$ satisfying \eqref{eq: q polynomials} are not necessarily unique. 

    We will show that once the polynomials $q_{j}^i$ satisfying \eqref{eq: q polynomials} are fixed, the vectors of the form \eqref{eq: characteristic vectors general case} are linearly independent as long as $b_1,\dots,b_L$ are linearly independent. As a result, we will obtain $\mathrm{dim}(\mathcal{G})=L$.

    We will proceed by induction. 

\begin{itemize}
    \item  First case. We recall that $\kappa_1=m_{b_1}$, the minimal $A$-annihilating polynomial of $b_1$. Then, 
    \begin{equation}\label{eq: 1st vector non zero a}
         g_1:=\widehat{\kappa_1}(A)b_1\not=\textbf{0}   
    \end{equation}
    since $\mathrm{deg}(\widehat{\kappa_1})< \mathrm{deg}(\kappa_1)$,  and $\kappa_1$ is the unique monic polynomial with the smallest degree such that $\kappa_1(A)b_1=\textbf{0}$. 
      
    \item Let $1< \ell\leq L$. Assuming that that  $g_1,\dots,g_{\ell-1}$ given by \eqref{eq: characteristic vectors general case}  
    are linearly independent, we will prove that $g_1,\dots, g_{\ell-1},g_\ell$, are linearly independent by separating into two cases.

     If $\kappa_\ell\not\equiv 1$, then $\mathrm{deg}(\widehat{\kappa_\ell})<\mathrm{deg}(\kappa_\ell)$. Thus, the vector $\widehat{\kappa_\ell}(A)b_\ell$ does not belong to $Z(A;b_1,\dots,b_{\ell-1})=\sum_{i=1}^{\ell-1}Z(A;b_i)$ 
   since by definition, $\kappa_\ell$ is the unique monic polynomial of the least degree such that $\kappa_\ell(A)b_\ell\in Z(A;b_1,\dots,b_{\ell-1})$. In particular, $\widehat{\kappa_\ell}(A)b_\ell$ is non-trivial.  Consequently, $g_\ell=\widehat{\kappa_\ell}(A)b_\ell-\sum_{i=1}^{\ell-1}\widehat{q_\ell^i}(A)b_i\not\in Z(A;b_1,\dots,b_{\ell-1})$. Hence, it canxxnot be written as a linear combination of the vectors $g_1=\widehat{\kappa_1}(A)b_1$,..., $g_{\ell-1}=\widehat{\kappa_{\ell-1}}(A)b_{\ell-1}-\sum_{i=1}^{\ell-2}\widehat{q_{\ell-1}^i}(A)b_i$ that lie in $\sum_{i=1}^{\ell-1}Z(A;b_i)$.

    If $\kappa_3\equiv 1$, then $g_\ell=\widehat{\kappa_\ell}(A)b_\ell-\sum_{i=1}^{\ell-1}\widehat{q_\ell^i}(A)b_i=-\sum_{i=1}^{\ell-1}\widehat{q_\ell^i}(A)b_i$. 
    By way of contradiction, suppose that the vectors 
    $g_1$,..., $g_{\ell-1}$, $g_\ell$ are linearly dependent, then there exist scalars $\beta_1,\dots,\beta_\ell$, not all zero, such that 
         \begin{align}\label{eq: dim aux ell}
             \textbf{0}=\beta_1g_1+\dots+\beta_{\ell-1}g_{\ell-1}-\beta_\ell\left(\sum_{i=1}^{\ell-1}\widehat{q_\ell^i}(A)b_i\right).
         \end{align}
    If we apply the operator $A-I$ to the equality above, 
    and note that
    \begin{align*}
        (A-I)g_j&=\cancel{\kappa_j(A)b_j}-\kappa_j(1)b_j-\sum_{i=1}^{j-1}\cancel{q_j^i(A)b_i}+\sum_{i=1}^{j-1}q_j^i(1)b_i=\sum_{i=1}^{j-1}q_j^i(1)b_i-\kappa_j(1)b_j,
    \end{align*}    
    we obtain
         \begin{align}\label{eq: inductive proof}
             \textbf{0}&=-\beta_1\kappa_1(1)b_1+\beta_2\left(q_2^1(A)b_1-\kappa_2(A)b_2\right)+\dots \\
           &\qquad\dots+\beta_{\ell-1}\left(\sum_{i=1}^{\ell-2}q_{\ell-1}^i(1)b_i-\kappa_{\ell-1}(1)b_{\ell-1}\right) +\beta_\ell\left(\sum_{i=1}^{\ell-1}q_{\ell}^i(1)b_i-b_\ell\right).\notag
         \end{align}
         Since by hypothesis $b_1,\dots,b_{\ell-1},b_\ell$ are linearly independent, and the last term in \eqref{eq: inductive proof} is the only one where the vector $b_\ell$ appears, we conclude that $\beta_\ell=0$. Replacing $\beta_\ell=0$ into \eqref{eq: dim aux ell} we get $\textbf{0}=\beta_1g_1+\dots+\beta_{\ell-1}g_{\ell-1}$ 
         with $\beta_1,\dots,\beta_{\ell-1}$ not all zero (since $\beta_\ell$ is already zero), which is a contradiction because of the inductive hypothesis.    
\end{itemize}
    \end{proof}

\begin{proof}[Proof of Theorem \ref{th: general test}]
The proof of Theorem \ref{th: general test} is a direct consequence of Theorem \ref{th: characterization of all g} and Theorem \ref{coro: nec suff cond}.
\end{proof}

    \begin{proof}[Proof of Corollary \ref{coro: at least K general}]
            
        Let $G$ and $B$ as in the statement of Theorem \ref {coro: nec suff cond}. 
        Without assuming any condition on the spectrum of $A$, the result is a direct consequence of Corollary \ref{coro: dim L} as the dimension of the  $A$-$\{b_1,\dots,b_L\}$ characteristic space  $\mathcal{G}$ equals the rank of $B$. Then, as we need the projected subspace $P_W(\mathcal{G})$ to have dimension $K$, we need at least $L=K$ spatial observational vectors $b_j$.
    \end{proof}

    \begin{remark}
As we mention before, the polynomials $\kappa_j$, for $1\leq j\leq L$, are unique and completely determined even though  the polynomials $q_{j}^i$ satisfying \eqref{eq: q polynomials} are not necessarily unique. In what follows we will make particular choices for those polynomials $q_j^i$. These elections will correspond to particular choices of bases for the observational orbits spaces $Z(A;b_1)$, $Z(A;b_1,b_2)$, ..., $Z(A;b_1,\dots,b_L)$.

 For each $1\leq j\leq L$, let $ r_j:=\mathrm{deg} (m_{b_j})=\mathrm{dim}(Z(A;b_j))$ and $s_j:=\mathrm{deg}(\kappa_j).$ 
 
 Notice that for each $2\leq j\leq L$, 
    \begin{equation*}
        \widehat{\kappa_j}=0 \Longleftrightarrow \kappa_j\equiv 1 \Longleftrightarrow b_j\in Z(A;b_1)+\dots +Z(A;b_{j-1}).
    \end{equation*}

        Since $\mathrm{deg}(m_{b_1})=r_1=s_1$, we can consider the basis $\mathcal{B}_1:=\{b_1,Ab_1,\dots,A^{s_1-1}b_1\}$ for $Z(A;b_1)$. Since  $\kappa_2(A)b_2\in Z(A;b_1)$, it can be written as a unique linear combination of the vectors in $\mathcal{B}_1$. Thus, we pick $q_2^1$ as the unique polynomial of degree at most $s_1-1$ such that $\kappa_2(A)b_2=q_2^1(A)b_1$.

    Having chosen $q_2^1(x)$ and a basis $\mathcal{B}_1$ for $Z(A;b_1)$, let us consider a basis $\mathcal{B}_2$ for $Z(A;b_1,b_2)=Z(A;b_1)+Z(A;b_2)$ by extending $\mathcal{B}_1=\{b_1,Ab_1,\dots,A^{s_1-1}b_1\}$ with some vectors from $\{b_2,Ab_2,\dots,A^{r_2-1}b_2\}$ (which is a basis for $Z(A;b_2)$). 
         As $\mathrm{deg}(\kappa_2)=s_2$, we will show that we can extend $\mathcal{B}_1$ to a basis $\mathcal{B}_2$ for $Z(A;b_1,b_2)$
         by choosing some vectors of the form $A^nb_2$ for $n\leq s_2-1$.
         Indeed, let $\kappa_2(x)=\sum_{j=0}^{s_2}u_jx^j$, with $u_{s_2}=1$. We know that $\kappa_2(A)b_2\in Z(A;b_1)$, thus there exits scalars $v_1,\dots, v_{s_1-1}$ such that 
         $$\sum_{j=0}^{s_2}u_jA^jb_2=\sum_{i=0}^{s_1-1}v_iA^{i}b_1.$$
         In particular,
         \begin{equation}\label{eq: explanation aux}
           A^{s_2}b_2=\sum_{i=0}^{s_1-1}{v_i}A^{i}b_1-\sum_{j=0}^{s_2-1}{u_j}A^jb_2.  
         \end{equation}
         That is, $A^{s_2}b_2$ can be written in terms of 
         $\mathcal{B}_1$ and some vectors of the form $A^nb_2$ with $n< s_2$. Now, given the vector $A^{s_2+1}b_2$, by applying $A$ \ak{to} \eqref{eq: explanation aux} we have
         \begin{equation}\label{eq: explanation aux 2}
             A^{s_2+1}b_2=\sum_{i=0}^{s_1-1}{v_i}A^{i+1}b_1-\sum_{j=1}^{s_2-1}{u_{j-1}}A^jb_2+u_{s_2-1}A^{s_2}b_2.
         \end{equation}
         Since the first $s_1$ vectors on the right-hand side of \eqref{eq: explanation aux 2}  can be written in terms of $\mathcal{B}_1$ (including $A^{s_1+1}b_1$) and since we have already shown that $A^{s_2}b_2$ is a linear combination of vectors of the form $A^nb_2$ with $n< s_2$, we conclude that $A^{s_2+1}b_2$ can also be written as a linear combination of $\mathcal{B}_1$ and some vectors of the form $A^nb_2$ with $n< s_2$. By repeating this argument recursively we obtain that all vectors $A^mb_2$ with $m\geq s_2$  are linear combinations of vectors in $\mathcal{B}_1$ and vectors of the form $A^nb_2$ with $n< s_2$ 
         (in other words, every vector in $Z(A;b_2)$ is a linear combination of vectors in $\mathcal{B}_1$ and vectors $A^nb_2$ with $n< s_2$). Finally, notice that the vector $A^{s_2-1}b_2\not\in Z(A;b_1)$ (i.e., it cannot be written in terms of $\mathcal{B}_1$ since $\kappa_2(x)$ has degree $s_2$ and it is the polynomial with the least degree satisfying $\kappa_2(A)b_2$ that can be written in terms of $\mathcal{B}_1$).
         
        Once we have chosen $\mathcal{B}_2$ in that way, one can pick accordingly unique polynomials $q_3^1$ and $q_{3}^2$ satisfying \eqref{eq: q polynomials} of degree less than $s_1-1$ and $s_2-1$ respectively.

        These choices can be repeated recursively.          
    \end{remark}

\subsection{Proofs for Section 2.2}
In Theorem \ref{negative_conclusion}, we assume that the source term belongs to one-dimensional subspace $W$ of $\F^d$  spanned by a known unit vector $\omega_1$. 
That is, for an arbitrary linear operator $A$ in $\F^d$ with adjoint $A^*$, we  consider the non-homogeneous discrete-time dynamical system
\begin{equation}\label{eq: dyn one c}
    x({n+1}) = A^*x(n) \quad +\underbrace{c_1 \, \omega_1}_{\text{source term } \omega},        
\end{equation}
and spatial measurements at only one location $b$:
\begin{equation}\label{discrete-time-dyn-c}
            y(n) = \langle x(n), b\rangle .  
\end{equation}
To  prove Theorem \ref{negative_conclusion}, we will need several auxiliary results. 

 \begin{remark}\label{remark: solution no 1}
      We recall that if $1$ does not belong to the spectrum of $A$, one strategy to pick the spatial sampling vector is to consider $b:=(I-A)\omega_1$ as in Corollary \ref{coro: at least as many as dim W}. Then, we consider scalars $\{\alpha_n\}_{n=0}^r$ such that $m_b(x)=\sum_{n=0}^r\alpha_n x^n$ is the  minimal $A$-annihilating of $b$. As $m_b$ divides the minimal polynomial of $A$, it holds that $m_b(1)\not=0$. Therefore, by considering $r+1$ space-time measurements $y(0),\dots, y(r)$ and performing the following linear combinations
    \begin{align*}
        \sum_{n=0}^r \alpha_n y(n) &=\langle c_1 \, \omega_1, \sum_{n=0}^r\alpha_n\Lambda_n b\rangle=\langle c_1 \, \omega_1,(I-A)^{-1}m_b(1)b\rangle=c_1 \, m_b(1) \,  \langle \omega_1,\omega_1\rangle, 
    \end{align*}
    we can recover the intensity $c_1$ of the source term $\omega:=c_1 \, \omega_1$ as  $$c_1=\frac{1}{m_b(1)}\sum_{n=0}^r \alpha_n y(n).$$
\end{remark}

The following result provides a simple strategy to pick a space sampling vector $b$ in order to recover the scalar $c_1$ in \eqref{eq: dyn one c} in a wide range of scenarios.
    \begin{proposition}\label{simple_cases}
        Let $A$ be an operator in $\F^d$, and $W$ be the one-dimensional subspace of $\F^d$ spanned by a unit vector $\omega_1\in\F^d$. If there exists an eigenvector $b$ of $A$ such that $\omega_1$ is not orthogonal to $b$, then $b$ is, in fact, complete for Problem \ref{prob: general}. In particular, if $A$ is diagonalizable, it is always possible to recover $c_1$ by using only one sampling vector $b$.
    \end{proposition}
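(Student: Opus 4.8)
The plan is to prove Proposition \ref{simple_cases} by exhibiting an explicit linear combination of the measurements $y(n)$ that isolates the source intensity $c_1$, using the fact that $b$ is an eigenvector of $A$. Suppose $Ab=\lambda b$ for some $\lambda\in\F$. The key simplification is that the observational orbit $Z(A;b)$ is one-dimensional, namely $Z(A;b)=\spn\{b\}$, and that the powers $A^n b=\lambda^n b$ together with $\Lambda_n b = \left(\sum_{j=0}^{n-1}\lambda^j\right) b$ become scalar multiples of $b$. Feeding this into the measurement formula \eqref{data}, each sample takes the scalar form $y(n)=\lambda^n \langle x_0, b\rangle + \left(\sum_{j=0}^{n-1}\lambda^j\right)\langle \omega, b\rangle$, where $\omega = c_1\,\omega_1$, so that $\langle \omega, b\rangle = c_1\langle\omega_1,b\rangle$ and the hypothesis $\langle\omega_1,b\rangle\neq 0$ guarantees this coefficient is nonzero.

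The main step is then to separate the two unknowns $\langle x_0,b\rangle$ and $c_1\langle\omega_1,b\rangle$ by a finite difference. I would split into two cases according to whether $\lambda=1$. If $\lambda\neq 1$, then $A-I$ restricted to $\spn\{b\}$ is invertible and $b=(I-A)\big(\tfrac{1}{1-\lambda}b\big)$ lies in the range appropriately; more directly, I would apply Theorem \ref{coro: nec and sufi no eigenval 1} or simply observe that $(A-I)^{-1}b = \tfrac{1}{\lambda-1}b$, so $P_W(A-I)^{-1}b = \tfrac{1}{\lambda-1}P_W b = \tfrac{\langle b,\omega_1\rangle}{\lambda-1}\,\omega_1 \neq \mathbf{0}$, giving $\rank\big(P_W(A-I)^{-1}b\big)=1=K$ and completeness follows immediately. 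If $\lambda=1$, then $b$ itself is a fixed vector of $A$ and $y(n)=\langle x_0,b\rangle + n\,c_1\langle\omega_1,b\rangle$; here the second difference $y(n+1)-y(n)=c_1\langle\omega_1,b\rangle$ recovers $c_1$ directly, again using $\langle\omega_1,b\rangle\neq 0$. Alternatively, for a unified treatment I would verify Condition \eqref{GenCondComp} of Theorem \ref{th: general test}: since $m_b(x)=x-\lambda$, we have $\kappa_1(x)=x-\lambda$, and $\widehat{\kappa_1}(x)=\tfrac{(x-\lambda)-(1-\lambda)}{x-1}=1$ when $\lambda\neq 1$, while $\widehat{\kappa_1}(x)=1$ also when $\lambda=1$ (as $\kappa_1=x-1$ gives $\widehat{\kappa_1}=1$), so $g_1=\widehat{\kappa_1}(A)b=b$ and $P_W(g_1)=\langle b,\omega_1\rangle\,\omega_1\neq\mathbf{0}$, spanning $W$.

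I expect the only delicate point to be the bookkeeping around the case $\lambda=1$, where $(A-I)^{-1}$ is unavailable and one must argue through Theorem \ref{th: general test} rather than Theorem \ref{coro: nec and sufi no eigenval 1}; the computation $\widehat{x-1}=1$ handles this cleanly, so the obstacle is more notational than substantive. For the final assertion about diagonalizable $A$, I would argue that if $A$ is diagonalizable then $\F^d$ admits a basis of eigenvectors $\{u_1,\dots,u_d\}$ of $A$, and since $\omega_1\neq\mathbf{0}$ it cannot be orthogonal to every $u_i$ (otherwise $\omega_1\perp\F^d$, forcing $\omega_1=\mathbf{0}$). Hence some eigenvector $b=u_i$ satisfies $\langle\omega_1,b\rangle\neq 0$, and the first part applies to yield a complete single observational vector, completing the proof.
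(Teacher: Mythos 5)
Your proposal is correct, but one of its two routes has a flaw you should notice: for the case $\lambda\neq 1$ you invoke Theorem \ref{coro: nec and sufi no eigenval 1} and write $(A-I)^{-1}b$, yet that theorem's hypothesis is $1\notin\sigma(A)$ for the \emph{whole operator}, not merely that the eigenvalue of $b$ differs from $1$. If $A$ has eigenvalue $1$ on some other eigenvector while $Ab=\lambda b$ with $\lambda\neq 1$, then $(A-I)^{-1}$ does not exist and that branch is unjustified as written. What actually carries your proof in full generality is your ``unified treatment'': $m_b(x)=x-\lambda$ gives $\widehat{m_b}\equiv 1$, hence $g_1=b$ and $P_W(g_1)=\langle b,\omega_1\rangle\,\omega_1\neq{\bf 0}$, so Condition \eqref{GenCondComp} of Theorem \ref{th: general test} holds with $L=1=\dim W$; this is sound, needs no case split, and is essentially Lemma \ref{test} specialized to $\deg m_b=1$.

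The paper's own proof is far more elementary and avoids all of this machinery: writing $Ab=\overline{\lambda}b$, one has $y(1)=\lambda y(0)+c_1\langle\omega_1,b\rangle$, so the explicit formula
$$c_1=\frac{y(1)-\lambda y(0)}{\langle\omega_1,b\rangle}$$
recovers $c_1$ from just two consecutive samples, uniformly in $\lambda$ --- including $\lambda=1$. Your own scalar computation of $y(n)$ already contains this observation: the combination $y(1)-\lambda y(0)$ kills the $x_0$ term for \emph{every} $\lambda$, so the case split (and your ``second difference'' at $\lambda=1$, which is really a first difference) is unnecessary. What your route buys is integration with the paper's general completeness framework; what the paper's route buys is an explicit two-sample reconstruction formula that stands on its own. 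Two minor bookkeeping points: over $\C$ your formula for $y(n)$ should carry conjugates ($\langle x_0,A^nb\rangle=\overline{\lambda}^{\,n}\langle x_0,b\rangle$ when $Ab=\lambda b$, which is why the paper normalizes $Ab=\overline{\lambda}b$), and your argument for the diagonalizable case (a nonzero vector cannot be orthogonal to an entire eigenbasis) is fine and spells out what the paper leaves as a parenthetical remark.
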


    \begin{proof} 
        Consider an eigenvector $b$  of $A$ such that is not orthogonal to $\omega_1$ (this can be achieved, for example, if $A$ is diagonalizable). If we know two consecutive observations, for instance, $y({0})$ and $y(1)$, then
        $$c_1=\frac{y({1})-\lambda y(0)}{\langle\omega_1,b\rangle},$$
        where $\lambda$ is  such that $Ab=\overline{\lambda} b$. (Notice that in this case, since $b$ is an eigenvector for $A$, its minimal $A$-annihilating polynomial has degree 1.)
    \end{proof}
As a particular case of Theorem \ref{th: characterization of all g} we have the following corollary.

\begin{corollary}
\label{lemma: characterization of b admissible vectors}
Let $A$ be a linear operator in $\F^d$ and let $b\in\F^d$. Let $m_b(x)$ be the minimal $A$-annihilating polynomial of $b$. Then, every =  $A$-$\{b\}$ characteristic vector $g\in\F^d$  (see Definition \ref {def: general admissible vectors}) is a multiple of 
$\widehat{m_b}(A)b$,
where $\widehat{m_b}\in\F[x]$ is given by 
\begin{equation}\label{eq: p in general}
\widehat{m_b}(x):=\frac{m_b(x)-m_b(1)}{x-1}\in\F[x].
\end{equation} 
In particular, if $m_b(1)=0$, then every   non-zero $A$-$\{b\}$ characteristic vector $g$ is an eigenvector of $A$ corresponding to eigenvalue $1$.  
\end{corollary}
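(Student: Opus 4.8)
The plan is to obtain this as the specialization of Theorem \ref{th: characterization of all g} to the single-vector case $L=1$, while also recording a short self-contained derivation since the $L=1$ situation is transparent. Recall from Definition \ref{def: general admissible vectors} that $g$ is an $A$-$\{b\}$ characteristic vector precisely when $g\in Z(A;b)$ and $(A-I)g=\mu b$ for some scalar $\mu\in\F$. The first step is to exploit membership in $Z(A;b)$: every such $g$ can be written as $g=p(A)b$ for some $p\in\F[x]$.

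Next I would translate the condition $(A-I)g=\mu b$ into a divisibility statement. Since $((x-1)p(x)-\mu)$ applied to $A$ annihilates $b$, the minimal $A$-annihilating polynomial $m_b$ must divide $(x-1)p(x)-\mu$; write $(x-1)p(x)-\mu=\alpha(x)m_b(x)$ for some $\alpha\in\F[x]$. Evaluating at $x=1$ forces $\mu=-\alpha(1)m_b(1)$, hence $p(x)=\widehat{\alpha m_b}(x)$ in the sense of Notation \ref{HatNotation}. Applying Part (1) of Lemma \ref{lemma: lemma very auxiliar} to factor $\widehat{\alpha m_b}=\widehat{\alpha}\,m_b+\alpha(1)\widehat{m_b}$ and using $m_b(A)b=\textbf{0}$ then collapses $g=p(A)b$ to $g=\alpha(1)\,\widehat{m_b}(A)b$. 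This shows $g$ is a scalar multiple of $\widehat{m_b}(A)b$, with the scalar $a_1=\alpha(1)$, exactly as the formula \eqref{eq: g in general} predicts for $L=1$ with $\kappa_1=m_b$.

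For the eigenvector claim I would compute $(A-I)\widehat{m_b}(A)b$ directly from the identity $(x-1)\widehat{m_b}(x)=m_b(x)-m_b(1)$, which gives $(A-I)\widehat{m_b}(A)b=m_b(A)b-m_b(1)b=-m_b(1)b$. Thus when $m_b(1)=0$ any characteristic vector $g=a_1\widehat{m_b}(A)b$ satisfies $(A-I)g=\textbf{0}$; if in addition $g\neq\textbf{0}$, this means $g\in\ker(A-I)$, i.e., $g$ is an eigenvector of $A$ with eigenvalue $1$. There is no real obstacle here: the only point needing care is the bookkeeping that $\widehat{m_b}$ has degree strictly less than $m_b$, so that $\widehat{m_b}(A)b$ is genuinely the generator appearing above, and the observation that for the eigenvector conclusion one needs only the hypothesis $g\neq\textbf{0}$ rather than any separate nonvanishing of $\widehat{m_b}(A)b$.
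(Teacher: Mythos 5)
Your proof is correct and follows essentially the same route as the paper: the paper states this corollary as the $L=1$ specialization of Theorem \ref{th: characterization of all g}, and your self-contained divisibility argument (write $g=p(A)b$, divide $(x-1)p(x)-\mu$ by $m_b$, evaluate at $x=1$, and collapse via Part (1) of Lemma \ref{lemma: lemma very auxiliar}) reproduces exactly the $L=1$ case of that theorem's proof. Your closing computation $(A-I)\widehat{m_b}(A)b=-m_b(1)b$ is likewise the same observation the paper uses for the eigenvector claim, so nothing is missing.
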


As an application of Corollary \ref{lemma: characterization of b admissible vectors}, the following result gives us a general test for discriminating whether a vector $b$ is an appropriate or inappropriate choice as a spatial sampling vector for recovering $c$ in \eqref{eq: dyn one c}.

\begin{lemma}
\label{test}
    Given a linear operator $A$ in $\F^d$, and $W$ a one-dimensional subspace of $\F^d$ spanned by a unit vector $\omega_1\in\F^d$. A spatial sampling vector $b\in\F^d$ is complete for the Problem \ref{prob: general} if and only if 
    \begin{equation*}
         \langle \omega_1, \widehat {m_b}(A)b\rangle\not=0,
    \end{equation*}
    where $m_b$ is the minimal $A$-annihilating polynomial of $b$.    
\end{lemma}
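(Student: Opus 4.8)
The plan is to read Lemma~\ref{test} as the one-observational-vector, one-dimensional-source specialization of the completeness characterization, exploiting the fact that for a single $b$ all characteristic vectors lie on a single line.

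First I would pin down the relevant characteristic vectors. By Corollary~\ref{lemma: characterization of b admissible vectors}, every $A$-$\{b\}$ characteristic vector (in the sense of Definition~\ref{def: general admissible vectors}) is a scalar multiple of $\widehat{m_b}(A)b$; equivalently, the $A$-$\{b\}$ characteristic space $\mathcal{G}$ of $\mathrm{span}\{b\}$ is the line $\mathrm{span}\{\widehat{m_b}(A)b\}$. This is exactly the $L=1$ instance of \eqref{eq: characteristic vectors general case}, in which $\kappa_1 = m_b$ and the correcting sum is empty, so that $g_1 = \widehat{m_b}(A)b$.

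Next I would invoke the completeness criterion. By Theorem~\ref{coro: nec suff cond} (or, directly, by Theorem~\ref{th: general test} with $L=1$, whose hypothesis $L \geq \dim W$ reads $1 \geq 1$ and is automatic), the vector $b$ is complete for Problem~\ref{prob: general} if and only if the projections under $P_W$ of the characteristic vectors span $W$. Since every characteristic vector is a multiple of $g_1 = \widehat{m_b}(A)b$ and $\dim W = 1$, this spanning condition collapses to the single nonvanishing requirement $P_W\bigl(\widehat{m_b}(A)b\bigr) \neq \textbf{0}$.

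Finally I would convert this projection statement into the advertised inner product. As $\omega_1$ is a unit vector spanning $W$, the orthogonal projection onto $W$ is $P_W v = \langle v, \omega_1\rangle\,\omega_1$, so $P_W\bigl(\widehat{m_b}(A)b\bigr)\neq\textbf{0}$ precisely when $\langle \widehat{m_b}(A)b, \omega_1\rangle \neq 0$; since a scalar and its complex conjugate vanish together, this is equivalent to $\langle \omega_1, \widehat{m_b}(A)b\rangle \neq 0$, the claimed condition. The only points demanding care are the collapse of ``spans $W$'' to ``nonzero projection,'' which uses $\dim W = 1$ crucially, and the conjugation bookkeeping that makes the two orders of the inner product interchangeable over $\C$; there is no substantive obstacle beyond assembling these earlier results.
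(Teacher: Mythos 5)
Your proposal is correct and follows essentially the same route as the paper, whose proof is precisely the combination of Corollary~\ref{lemma: characterization of b admissible vectors} (all $A$-$\{b\}$ characteristic vectors are multiples of $\widehat{m_b}(A)b$) with Condition~\ref{item: 1st cond} of Theorem~\ref{coro: nec suff cond} in the case $L=1$, $b_1:=b$. Your additional bookkeeping—collapsing the spanning condition to a single nonvanishing projection via $\dim W=1$, and rewriting $P_W(\widehat{m_b}(A)b)\neq\textbf{0}$ as $\langle \omega_1, \widehat{m_b}(A)b\rangle\neq 0$—just spells out what the paper leaves implicit.
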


\begin{proof}
    The proof follows from checking Condition \ref{item: 1st cond} in Theorem \ref{coro: nec suff cond} in the case $L=1$, $b_1:=b$, and using Corollary \ref{lemma: characterization of b admissible vectors}.
\end{proof}

\begin{proof}[Proof of Theorem \ref{negative_conclusion}]
    Let $\sigma(A)$ (resp. $\sigma(A^*)$) be the spectrum of $A$ (resp. of $A^*$). First simply notice that $1\in \sigma(A)$ if and only if $1\in \sigma(A^*)$ (we recall that $\sigma(A^*)=\overline{\sigma(A)}$). Therefore, if $1$ is not an eigenvalue for $A$, the space $\ker(I-A^*)^d$ is the trivial subspace $\{{\bf 0}\}$. Thus, Condition \eqref{eq: not observable subspace} is trivially satisfied as the source location $\omega_1$ is assumed different from the zero vector, and there is always a complete set of vectors for Problem \ref{prob: general} by considering only one spatial sampling vector as pointed out in Remark \ref{remark: solution no 1}. 

    Now,  assume $1\in\sigma(A)$. In the notation of Remark \ref{remark: projectors Jordan} given in the Appendix, let us assume $\lambda_1=1$ and so $E_1:=E_{\lambda_1}$ is the projector ($(E_1)^2=E_1$) onto the generalized eigenspace of $A$ corresponding to the eigenvalue $1$.
     Then, from \eqref{eq: ker proj *}, $\omega_1\in\ker(A^*-I)^d$ if and only if $\omega_1\in\ker(I-E_1^*)$.    
    Now, we separate the rest of the proof into two cases. 
    \begin{enumerate}
         \item  Assume $\omega_1\in\ker(I-E_1^*)$, that is, $E_1^*\omega_1=\omega_1$. We claim that there exists a complete $b\in\F^d$ for Problem \ref{prob: general} if and only if $\omega_1\not\in (\ker (A-I))^\perp$. Indeed, since $E_1$ is a polynomial on $A$ (see Remark \ref{remark: projectors Jordan}), we have by Lemma \ref {test} that   $b\in\F^d$ is complete  if and only if          \begin{equation*}
             0\not=\langle \omega_1, \widehat{m_b}(A)b\rangle=\langle E_1^*\omega_1, \widehat{m_b}(A)b\rangle=\langle \omega_1, \widehat{m_b}(A)E_1b\rangle,
         \end{equation*}
        where we recall that $\widehat{m_b}(x)\in\F[x]$ is as in \eqref{eq: p in general}.
        
        Thus, if $b$ is a complete for Problem \ref{prob: general}, it is necessary that
        $E_1b\not=\textbf{0}$. This implies that $m_b(1)=0$ (see Lemma \ref{lemma: appendix} in the Appendix). 
        Then, by the particular case in Corollary  \ref{lemma: characterization of b admissible vectors}, we have that the vector $\widehat{m_b}(A)b$ is an eigenvector of $A$ corresponding to the eigenvalue $1$. Moreover, notice that
        \begin{equation*}
            \ker(A-I)=\{\widehat {m}_{b}(A)b : \, b\in \F^d \text{ with } m_b(1)=0\}
        \end{equation*} 
        (see Lemma \ref{lemma: appendix}.)
        As a result, we can find a complete $b$ as long as $\omega_1$ is not orthogonal to the eigenspace $\ker(A-I)$. We recall that, if $\omega_1\not\in (\ker (A-I))^\perp$, we are in the setting of Proposition \ref{simple_cases} which provides a strategy for picking an accurate sampling vector $b$.

         \item If $\omega_1\not\in\ker(I-E_1^*)$, we claim that it is always possible to find a complete observational vector $b$  for Problem \ref{prob: general} given by
        \begin{equation}\label{eq: b}
            b:=(A-I)(I-E_1)(I-E_1^*)\omega_1.
        \end{equation}

        First, we will see that $b\not=\textbf{0}$. By assumption,  $(I-E_1^*)\omega_1\not = \textbf{0}$. Then, we have 
        \begin{equation}\label{eq: aux big th}
            0\not=\langle (I-E_1^*)\omega_1,(I-E_1^*)\omega_1 \rangle=\langle \omega_1,(I-E_1)(I-E_1^*)\omega_1 \rangle,
        \end{equation}
        and so $(I-E_1)(I-E_1^*)\omega_1\not=\textbf{0}$. Finally,  we know that
        $(I-E_1)$ projects onto $\bigoplus_{\lambda\in \sigma (A), \lambda\not=1}\ker(A-\lambda I)^d$ and has kernel $\ker(A-I)^d$ (see Remark \ref{remark: projectors Jordan}).  Since $\ker (A-I)\subset\ker(A-I)^d$, we get that  $b=(A-I)(I-E_1)(I-E_1^*)\omega_1\not=\textbf{0}$.
        Moreover, as $$(I-E_1)(I-E_1^*)\omega_1\in \bigoplus_{\lambda\in \sigma (A), \lambda\not=1}\ker(A-\lambda I)^d,$$ and that the direct sum of generalized eigenspaces of $A$ is invariant under $A$ (and hence under any polynomial on $A$), we have that  $b\in \bigoplus_{\lambda\in \sigma (A), \lambda\not=1}\ker(A-\lambda I)^d$. Hence,  $m_b(1)\not =0$. 

        Now, consider $\widehat{m_b}(x)=\frac{m_b(x)-m_b(1)}{x-1}\in \F[x].$
        Then, we have that
        
       \begin{align*}
            \widehat{m_b}(A)b&=\widehat{m_b}(A)(A-I)(I-E_1)(I-E_1^*)\omega_1\\
                &=\big(m_b(A)-m_b(1)\big)(I-E_1)(I-E_1^*)\omega_1.
        \end{align*}
        We will show that $$m_b(A)(I-E_1)(I-E_1^*)\omega_1={\bf 0}$$
        by using the fact that $I-E_1=(I-E_1)^2$ and that $I-E_1=p(A)$, where $p(x):=\widetilde{h_{1}}(x)(x- 1)^{n_1}$ is the polynomial given in \eqref{eq: E and I-E} (for eigenvalue $\lambda_i=1$). Notice that $(x-1)$ divides $p(x)$, and so we can factor $p(x)=\frac{p(x)}{(x-1)}(x-1)=\widehat{p}(x)(x-1)$. Then, when evaluating at $A$ we have
        \begin{align*}
            m_b(A)(I-E_1)(I-E_1^*)\omega_1&=m_b(A)(I-E_1)(I-E_1)(I-E_1^*)\omega_1\\
            &=m_b(A)\widehat{p}(A)(A-I)(I-E_1)(I-E_1^*)\omega_1\\
            &=\widehat{p}(A)m_b(A)b=\textbf{0}.
        \end{align*}         
       Therefore,
        \begin{equation}\label{eq: pbA}
           \widehat{m_b}(A)b=-m_b(1)(I-E_1)(I-E_1^*)\omega_1. 
        \end{equation}
        Using \eqref{eq: pbA} and \eqref{eq: aux big th} we obtain
        \begin{align*}
            \langle \omega_1,\widehat{m_b}(A)b\rangle &=-m_b(1)\langle \omega_1, (I-E_1)(I-E_1^*)\omega_1\rangle\\&=|m_b(1)|\|(I-E_1^*)\omega_1\|^2\not =0.
        \end{align*}
        Thus, by  Lemma \ref{test} the vector $b$ defined by \eqref{eq: b} is complete for Problem \ref{prob: general}.
    \end{enumerate}

\end{proof}

\section{Appendix: Matrix Analysis}

    \begin{theorem}\label{th: interpol th}  \cite[Theorem 13.2]{higham2008functions}
        Let $f$ be an analytic function defined on 
        a domain containing the $s$ roots of the polynomial
        $m_b(x)=\prod_{i=1}^s(x-\lambda_i)^{n_i}$ which is the minimal $A$-annihilating of $b$ (each root $\lambda_i$ has multiplicity $n_i$). Then $$f(A)b = q(A)b$$ for $q$ the (unique)
        Hermite interpolating polynomial of degree less than or equal to  $\sum_{i=1}^s n_i=deg(m_b)$ that satisfies
        the interpolation conditions
        \begin{equation}\label{eq: interpolating pol}
          \frac{d^j}{dx^j}q(\lambda_i)=\frac{d^j}{dx^j}f(\lambda_i) \qquad \text{ for all } j=0,\dots, n_i-1, \, i=1,\dots, s  .
        \end{equation}
        In particular, if $f$ is an analytic function on the spectrum of $A$ and $m_b$ is replaced by the minimal polynomial of $A$, then $f(A)=q(A)$. 
    \end{theorem}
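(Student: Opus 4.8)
The plan is to reduce the whole statement to the cyclic (Krylov) subspace generated by $b$, on which the only relevant spectrum is the root set of $m_b$, and then to exploit that $f-q$ is divisible by $m_b$ as analytic functions. First I would set $Z:=Z(A;b)$, the $A$-invariant Krylov subspace, which by Remark \ref{KS} has dimension $r=\deg(m_b)$ with basis $\{b,Ab,\dots,A^{r-1}b\}$, so that $b$ is a cyclic vector for the restriction $A|_Z$. Consequently the minimal polynomial of $A|_Z$ is exactly $m_b$, and its spectrum is the root set $\{\lambda_1,\dots,\lambda_s\}$ of $m_b$. Since $Z$ is $A$-invariant and contains $b$, for every polynomial $p$ one has $p(A)b=p(A|_Z)\,b$; I would therefore interpret the (a priori only locally meaningful) object $f(A)b$ through the restriction, namely $f(A)b:=f(A|_Z)\,b$, which is legitimate because $f$ is analytic on a domain containing $\sigma(A|_Z)$, so the holomorphic functional calculus applies to $A|_Z$ even when $f$ is undefined on the rest of $\sigma(A)$.

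The heart of the argument is a divisibility observation. Because $q$ satisfies the Hermite conditions \eqref{eq: interpolating pol}, the analytic function $f-q$ vanishes to order at least $n_i$ at each $\lambda_i$; hence $h(z):=(f(z)-q(z))/m_b(z)$ has only removable singularities at the $\lambda_i$ and extends analytically on the same domain. Applying the holomorphic functional calculus to $A|_Z$, and using its multiplicativity together with polynomial consistency, I obtain $(f-q)(A|_Z)=h(A|_Z)\,m_b(A|_Z)$. But $m_b$ is the minimal polynomial of $A|_Z$, so $m_b(A|_Z)=0$; equivalently $m_b(A)b=\textbf{0}$ by definition of the minimal $A$-annihilating polynomial. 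Therefore $(f-q)(A)b=h(A|_Z)\,m_b(A|_Z)\,b=\textbf{0}$, which is precisely $f(A)b=q(A)b$. Existence and uniqueness of the degree-$(<r)$ Hermite interpolant matching the $\sum_i n_i=r$ conditions is the classical Hermite interpolation fact, so $q$ is well defined.

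For the final (``in particular'') assertion I would run the same argument globally: replacing $m_b$ by the minimal polynomial $m_A$ of $A$ and taking $q$ to be the Hermite interpolant of $f$ at the eigenvalues of $A$ to the orders dictated by the largest Jordan block sizes, the factorization $f-q=m_A\,h$ yields $(f-q)(A)=h(A)\,m_A(A)=\textbf{0}$, whence $f(A)=q(A)$; here $f$ must be analytic on all of $\sigma(A)$, which is exactly the stated hypothesis for this case.

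The main obstacle I anticipate is not the algebra but the foundational bookkeeping: making precise that $f(A)b$ is meaningful and equals $f(A|_Z)\,b$ when $f$ need only be analytic near the roots of $m_b$ (a proper subset of $\sigma(A)$ in general), and justifying the two functional-calculus properties I rely on, namely polynomial consistency and multiplicativity $(gh)(A|_Z)=g(A|_Z)\,h(A|_Z)$. Both are standard for the Cauchy-integral calculus on the invariant subspace $Z$, but they must be invoked carefully since the global object $f(A)$ may fail to exist. An alternative, more computational route that avoids the factorization is to pass to the Jordan form of $A|_Z$: because $b$ is cyclic, $A|_Z$ is nonderogatory with a single Jordan block of size $n_i$ per eigenvalue $\lambda_i$, and the block expansion $g(\lambda I+N)=\sum_{j=0}^{n_i-1} g^{(j)}(\lambda)\,N^j/j!$ shows directly that $g(A|_Z)$ depends only on the derivative data $\{g^{(j)}(\lambda_i)\}_{j<n_i}$, which $q$ and $f$ share by \eqref{eq: interpolating pol}; this gives $f(A|_Z)=q(A|_Z)$ and hence $f(A)b=q(A)b$.
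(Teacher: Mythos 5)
Your proof is correct, but there is nothing in the paper to compare it against: the paper does not prove this statement, it imports it wholesale via the citation \cite[Theorem 13.2]{higham2008functions}. Relative to the standard argument behind that citation, your route is genuinely different and in fact better adapted to how the paper actually uses the theorem. Higham's proof is purely algebraic: there $f(A)$ is \emph{defined} to be $p(A)$, where $p$ is the Hermite interpolant of $f$ at the roots of the minimal polynomial $m_A$ of $A$; since $m_b$ divides $m_A$, the polynomial $p-q$ satisfies all the Hermite conditions attached to $m_b$, hence $p-q=u\,m_b$ for some polynomial $u$, and $p(A)b-q(A)b=u(A)\,m_b(A)\,b=\textbf{0}$. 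That argument presupposes $f$ is defined on all of $\sigma(A)$. Your argument---restricting to the Krylov subspace $Z(A;b)$, identifying $m_b$ as the minimal polynomial of $A|_{Z}$, and factoring $f-q=h\,m_b$ with $h$ analytic---needs $f$ to be analytic only near the roots of $m_b$, which is exactly the hypothesis as the paper states it and exactly what the paper needs in Lemma \ref{lem: inv I-A in orbit}: there $f(x)=1/(x-1)$ and $1$ may belong to $\sigma(A)$ even though $m_b(1)\neq 0$, so the global object $f(A)$ need not exist and $f(A)b$ must be read as $f(A|_{Z})b$, precisely the interpretation you make and justify. Two small points: uniqueness of the Hermite interpolant holds for degree strictly less than $\deg(m_b)$ (your formulation is the right one; the paper's ``less than or equal to'' is loose), and when $\F=\R$ and some $\lambda_i$ are complex one should complexify before invoking the holomorphic functional calculus or the Jordan-block formula---both standard and harmless.
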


\begin{remark}\label{remark: projectors Jordan}
    Let $A$ be a linear operator in $\F^d$, and let $$m_A(x):=\prod_{j=1}^s(x-\lambda_j)^{n_j}$$ be its minimal polynomial (i.e., the unique monic polynomial with the least degree satisfying $m_A(A)={\bf 0}$). (Notice that, $\lambda_1, \ldots, \lambda_s\in\C$ are the distinct eigenvalues of $A$ and, when writing $A$ in Jordan form, $n_i$ determines the size of the largest Jordan block in which $\lambda_i$ appears.) 
    By the Lagrange-Hermite formula, the Hermite interpolating polynomial satisfying \eqref{eq: interpolating pol} for the case $f(x)=1$ is exactly the constant polynomial $1$ and it can be written as
        \begin{equation}\label{eq: hermite}
            1=\sum_{i=1}^s\left[\left(\sum_{k=0}^{n_i-1} \frac{1}{k !} \phi_i^{(k)}\left(\lambda_i\right)\left(x-\lambda_i\right)^k\right) \prod_{j \neq i}\left(x-\lambda_j\right)^{n_j}\right]
        \end{equation}
        where $ \phi_i(x):=1 / \prod_{j \neq i}\left(x-\lambda_j\right)^{n_j}$ (see \cite{higham2008functions}). 
        Let
        \begin{equation}\label{eq: p_lambda}
            p_{\lambda_i}(x):= h_{\lambda_i}(x) \prod_{j \neq i}\left(x-\lambda_j\right)^{n_j}, \quad \text{ where } \quad h_{\lambda_i}(x):=\sum_{k=0}^{n_i-1} \frac{1}{k !} \phi_i^{(k)}\left(\lambda_i\right)\left(x-\lambda_i\right)^k\,
        \end{equation}
        and consider the operators 
        \begin{equation}
            E_{\lambda_i}:=p_{\lambda_i}(A).
        \end{equation}        
        Therefore,
        \begin{equation}\label{eq: partition of the identity}
            I= E_{\lambda_1}+\dots+ E_{\lambda_s}.
        \end{equation}
        It is easy to check that
        \begin{equation*}
            E_{\lambda_i}E_{\lambda_k}=\begin{cases}
                {\bf 0} & \text{ if } i\not=k\\
                E_{\lambda_i} & \text{ if } i=k
            \end{cases}.
        \end{equation*}
        Indeed, if $i\not=k$, $p_{\lambda_i}(x)p_{\lambda_k}(x)$ has the same roots as $m_A(x)$, and so $m_A|p_{\lambda_i}p_{\lambda_k}$ having that $E_{\lambda_i}E_{\lambda_k}=p_{\lambda_i}(A)p_{\lambda_k}(A)={\bf 0}$. Also, notice that when we multiply by $E_{\lambda_i}$ on both sides of Identity \eqref{eq: partition of the identity}  we get that $E_{\lambda_i}=(E_{\lambda_i})^2$ (i.e., $E_{\lambda_i}$ is an idempotent operator). 
        Moreover, $E_{\lambda_i}$ and $I-E_{\lambda_i}$ (where $I-E_{\lambda_i}=\sum_{j\not= i}E_{\lambda_j}$) are idempotent projectors such that $I=E_{\lambda_i}+ (I-E_{\lambda_i})$ and $E_{\lambda_i}(I-E_{\lambda_i})={\bf 0}$. In addition, we can write 
        \begin{equation}\label{eq: E and I-E}
            E_{\lambda_i}=h_{\lambda_i}(A) \prod_{j \neq i}\left(A-\lambda_jI\right)^{n_j}, \qquad \text{ and } \qquad I- E_{\lambda_i} = \widetilde{h_{\lambda_i}}(A)(A-\lambda_i I)^{n_i}  
        \end{equation}
        where the polynomial $h_{\lambda_i}(x)$ is given in \eqref{eq: p_lambda}, and the explicit formulas for $\widetilde{h_{\lambda_i}}(x)$ can be deduced from \eqref{eq: hermite} obtaining
         \begin{equation}\label{eq: pol I-E}
                \widetilde{h_{\lambda_i}}(x):=\sum_{k=1, k\not=j}^s h_{\lambda_k}(x) \prod_{j \neq k, j\not= i}\left(x-\lambda_j I\right)^{n_j}.
            \end{equation}
        Therefore, using the above characterization and the decomposition of $\F^d$ as a direct sum  (not necessarily orthogonal) of the generalized eigenspaces of $A$
        $$\F^d={\bigoplus_{j=1}^{s}} \ker(A-\lambda_j I)^d,$$        
        we obtain
        \begin{equation}\label{eq: ker proj}
             \ker(E_{\lambda_i})={\bigoplus_{j\not=i}} \ker(A-\lambda_j I)^d,  \qquad  
         \ker(I-E_{\lambda_i})= \ker(A-\lambda_i I)^d.  
        \end{equation}
        In sum, $E_{\lambda_i}$ is the projection onto the generalized eigenspace corresponding to eigenvalue $\lambda_i$ whose kernel is the direct sum of the rest of the generalized eigenspaces of $A$. 
        Finally, taking the adjoint in the resolution of the identity \eqref{eq: partition of the identity} we get
        \begin{align*}
            I&=\left(\sum_{i=1}^s\left[\left(\sum_{j=0}^{n_i-1} \frac{1}{j !} \phi_i^{(j)}\left(\lambda_i\right)\left(A-\lambda_iI\right)^j\right) \prod_{j \neq i}\left(A-\lambda_jI\right)^{n_j}\right]
            \right)^*\\           &=\sum_{i=1}^s\left[\left(\sum_{j=0}^{n_i-1} \frac{1}{j !} \overline{\phi_i^{(j)}\left(\lambda_i\right)}\left(A^*-\overline{\lambda_i}I\right)^j\right) \prod_{j \neq i}\left(A^*-\overline{\lambda_j}I\right)^{n_j}\right]\\           &=\sum_{i=1}^s\underbrace{\left[\left(\sum_{j=0}^{n_i-1} \frac{1}{j !} (\phi_i^*)^{(j)}\left(\overline{\lambda_i}\right)\left(A^*-\overline{\lambda_i}I\right)^j\right) \prod_{j \neq i}\left(A^*-\overline{\lambda_j}I\right)^{n_j}\right]}_{E_{\lambda_i}^*}
        \end{align*}
        where $ \phi_i^*(x):=1 / \prod_{j \neq i}\left(x-\overline{\lambda_j}\right)^{n_j}$, which means that the operators $\{E_{\lambda_i}^*\}_{i=1}^s$ are the corresponding idempotent projectors onto the generalized eigenspaces associated to $A^*$. Using the notation given in \eqref{eq: p_lambda}, we can write $E_{\lambda_i}^*=p_{\overline{\lambda}_i}(A^*)$. Therefore, in analogy to \eqref{eq: ker proj}  
        \begin{equation}\label{eq: ker proj *}
       \begin{array}{cc}    \ker(E_{\lambda_i}^*)=\displaystyle{\oplus_{ j\not=i}} \ker( A^*-\overline{\lambda_j} I)^d, &\qquad \ker(I-E_{\lambda_i}^*)= \ker( A^*-\overline{\lambda_i} I)^d.
       \end{array} 
    \end{equation}
    We remark that it is not necessarily true that the projectors $E_{\lambda_i}$ are self-adjoint operators, in fact, they are not orthogonal projections in general. 
    In the special case when $A$ is in Jordan form (i.e., the matrix representation of the operator $A$ with respect to the canonical basis gives rise to its Jordan matrix representation), we have that the generalized eigenspaces are mutually orthogonal, and so the projectors  $E_{\lambda_i}$ are orthogonal projections. In this  case, $\ker(A-\lambda_i I) = \ker(A^*-\overline{\lambda_i} I)$.       
\end{remark}

\begin{lemma}\label{lemma: appendix}
Let $A$ be a linear operator in $\F^d$, and $b$ be a vector in $\F^d$. Let $\lambda_i$ be an eigenvalue of $A$ and $E_{\lambda_i}$ be the projection operator onto the generalized eigenspace of $A$ corresponding to $\lambda_i$. Then, $E_{\lambda_i}b\not=\textbf{0}$ if and only if $m_b(\lambda_i)=0$ (where $m_b$ is the unique minimal $A$-annihilating polynomial of $b$.)

Moreover, the eigenspace corresponding to $\lambda_i$ can be characterized by
\begin{equation*}
            \ker(A-\lambda_i I)=\left\{p_{b,\lambda_i}(A)b : \, b\in \F^d \text{ with } m_b(\lambda_i)=0, \text{ and } p_{b,\lambda_i}(x):=\frac{m_b(x)}{x-\lambda_i}\right\}.
        \end{equation*}
(Notice that when $\lambda_i=1$, then $p_{b,1}(x)=\widehat{m_b}(x)$.)        
\end{lemma}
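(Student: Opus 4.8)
The plan is to exploit the polynomial representation $E_{\lambda_i}=p_{\lambda_i}(A)$ from Remark \ref{remark: projectors Jordan}, together with the two facts recalled in Definition \ref{MAPB}: that $m_b$ divides $m_A(x)=\prod_{j=1}^s(x-\lambda_j)^{n_j}$, and that every polynomial annihilating $b$ is a multiple of $m_b$. Almost everything then reduces to divisibility bookkeeping among these polynomials.

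For the first (biconditional) assertion I would argue each direction by divisibility. First, suppose $m_b(\lambda_i)\neq 0$. Since $m_b$ is a monic divisor of $m_A$ with $(x-\lambda_i)\nmid m_b$, it follows that $m_b\mid \prod_{j\neq i}(x-\lambda_j)^{n_j}$, and hence $\prod_{j\neq i}(A-\lambda_j I)^{n_j}b=\mathbf{0}$; plugging this into the explicit form $E_{\lambda_i}=h_{\lambda_i}(A)\prod_{j\neq i}(A-\lambda_j I)^{n_j}$ from \eqref{eq: E and I-E} gives $E_{\lambda_i}b=\mathbf{0}$. Conversely, suppose $m_b(\lambda_i)=0$ but, for contradiction, $E_{\lambda_i}b=\mathbf{0}$. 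Then $p_{\lambda_i}(A)b=\mathbf{0}$, so $m_b\mid p_{\lambda_i}$. Evaluating the resolution of the identity \eqref{eq: partition of the identity} at $x=\lambda_i$ yields $p_{\lambda_i}(\lambda_i)=1$, because every other summand $p_{\lambda_k}$ with $k\neq i$ carries the factor $(x-\lambda_i)^{n_i}$ and therefore vanishes at $\lambda_i$. Thus $(x-\lambda_i)\nmid p_{\lambda_i}$, which contradicts $(x-\lambda_i)\mid m_b\mid p_{\lambda_i}$. Hence $E_{\lambda_i}b\neq\mathbf{0}$.

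For the ``Moreover'' characterization of $\ker(A-\lambda_i I)$ I would prove the two inclusions separately. For $\supseteq$, given any $b$ with $m_b(\lambda_i)=0$, the polynomial $p_{b,\lambda_i}(x)=m_b(x)/(x-\lambda_i)$ satisfies $(x-\lambda_i)\,p_{b,\lambda_i}(x)=m_b(x)$, so $(A-\lambda_i I)\,p_{b,\lambda_i}(A)b=m_b(A)b=\mathbf{0}$; therefore $p_{b,\lambda_i}(A)b\in\ker(A-\lambda_i I)$, and it is nonzero because $\deg p_{b,\lambda_i}<\deg m_b$ while $m_b$ is minimal. For $\subseteq$, take any nonzero $v\in\ker(A-\lambda_i I)$ and simply choose $b=v$: as $v$ is an eigenvector, its minimal $A$-annihilating polynomial is $m_v(x)=x-\lambda_i$, so $m_v(\lambda_i)=0$, $p_{v,\lambda_i}\equiv 1$, and $p_{v,\lambda_i}(A)v=v$, exhibiting $v$ in the right-hand set. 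The manipulations are routine, so there is no single hard step; the only points needing care are the identity $p_{\lambda_i}(\lambda_i)=1$ (which drives the contradiction in the converse above) and the minor subtlety that the right-hand set is literally the set of \emph{nonzero} eigenvectors, so the claimed equality with $\ker(A-\lambda_i I)$ is to be read up to the zero vector, which lies trivially in the kernel.
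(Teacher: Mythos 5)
Your proof is correct and follows essentially the same route as the paper's: both use the polynomial representation $E_{\lambda_i}=p_{\lambda_i}(A)$ together with divisibility of $b$-annihilators by $m_b$ for the biconditional, and the same two inclusions (via $(A-\lambda_i I)\,p_{b,\lambda_i}(A)b=m_b(A)b=\mathbf{0}$ for one direction, and $m_v(x)=x-\lambda_i$ for eigenvectors $v$ in the other) for the kernel characterization. Your two refinements --- deriving $p_{\lambda_i}(\lambda_i)=1$ from the resolution of the identity rather than asserting $p_{\lambda_i}(\lambda_i)\neq 0$, and noting that the right-hand set consists of nonzero eigenvectors so the equality holds only up to the zero vector --- are correct points that the paper leaves implicit.
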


\begin{proof}

First, observe that $E_{\lambda_i}b\ne\textbf{0}$ if and only if $p_{\lambda_i}(A)b\not=\textbf{0}$ where $p_{\lambda_i}(x)$ is given by \eqref{eq: p_lambda}. From its expression $p_{\lambda_i}(\lambda_i)\ne 0$, and $p_{\lambda_i}(\lambda_j)= 0$ for all $j\ne i$. 
Thus, if $(x-\lambda_i)$ is not a factor of the minimal $A$-annihilator $m_b$ of $b$, then, $p_{\lambda_i}(A)b=\textbf{0} $ since for this case $m_b$ divides $p_{\lambda_i}$. 
Conversely, if $m_b(\lambda_i)=0$ then $(x-\lambda_i)$ is a factor of all $A$-annihilators of $b$. Since $p_{\lambda_i}(\lambda_i)\ne 0$, then $p_{\lambda_i}(A)b\not=\textbf{0}$. 
    
    For the last assertion, it is easy to check that
 \begin{equation*}
            \left\{p_{b,\lambda_i}(A)b : \, b\in \F^d \text{ with } m_b(\lambda_i)=0, \text{ and } p_{b,\lambda_i}(x):=\frac{m_b(x)}{x-\lambda_i}\right\}\subseteq \ker(A-\lambda_i I).
    \end{equation*}
    Indeed, 
    $$(A-\lambda_i I)p_{b,\lambda_i}(A)b=m_b(A)b=\textbf{0}.$$
    The other inclusion also follows easily by noticing that, in general, a vector $v$ is an eigenvector of $A$ corresponding to eigenvalue $\lambda_i$ if and only if its minimal $A$-annihilating polynomial is $m_v(x)=(x-\lambda_i)$. Therefore, if $v\in \F^d$ is an eigenvector of $A$ corresponding to eigenvalue $\lambda_i$, we have  that 
$$p_{b,\lambda_i}(x)=\frac{m_v(x)}{x-\lambda_i}=\frac{x-\lambda_i}{x-\lambda_i}\equiv 1.$$
    So, we can trivially write $v=p_{v,\lambda_i}(A)v$ which belongs to
  \begin{equation*}
           \left\{p_{b,\lambda_i}(A)b : \, b\in \F^d \text{ with } m_b(\lambda_i)=0, \text{ and } p_{b,\lambda_i}(x):=\frac{m_b(x)}{x-\lambda_i}\right\}.
    \end{equation*}
\end{proof}

\bibliographystyle{siam}
\bibliography{Akram_refs}

\begin{thebibliography}{10}

\bibitem{AKh17}
{\sc R.~Aceska and Y.~H. Kim}, {\em Scalability of frames generated by
  dynamical operators}, Frontiers in Applied Mathematics and Statistics, 3
  (2017), p.~22.

\bibitem{APT15}
{\sc R.~Aceska, A.~Petrosyan, and S.~Tang}, {\em Multidimensional signal
  recovery in discrete evolution systems via spatiotemporal trade off}, Sampl.
  Theory Signal Image Process., 14 (2015), pp.~153--169.

\bibitem{ACCP21}
{\sc A.~Aguilera, C.~Cabrelli, D.~Carbajal, and V.~Paternostro}, {\em Dynamical
  sampling for shift-preserving operators}, Appl. Comput. Harmon. Anal., 51
  (2021), pp.~258--274.

\bibitem{ACCMP17}
{\sc A.~Aldroubi, C.~Cabrelli, A.~F. \c{C}akmak, U.~Molter, and A.~Petrosyan},
  {\em Iterative actions of normal operators}, J. Funct. Anal., 272 (2017),
  pp.~1121--1146.

\bibitem{ACMT17}
{\sc A.~Aldroubi, C.~Cabrelli, U.~Molter, and S.~Tang}, {\em Dynamical
  sampling}, Applied and Computational Harmonic Analysis, 42 (2017),
  pp.~378--401.
\newblock doi: 10.1016/j.acha.2015.08.014.

\bibitem{ADK13}
{\sc A.~Aldroubi, J.~Davis, and I.~Krishtal}, {\em Dynamical sampling:
  time-space trade-off}, Appl. Comput. Harmon. Anal., 34 (2013), pp.~495--503.

\bibitem{ADK15}
{\sc A.~Aldroubi, J.~Davis, and I.~Krishtal}, {\em Exact reconstruction of
  signals in evolutionary systems via spatiotemporal trade-off}, Journal of
  Fourier Analysis and Applications, 21 (2015), pp.~11--31.

\bibitem{AGK23}
{\sc A.~Aldroubi, L.~Gong, and I.~Krishtal}, {\em Recovery of rapidly decaying
  source terms from dynamical samples in evolution equations}, Sampling Theory,
  Signal Processing, and Data Analysis, 21 (2023), p.~15.

\bibitem{AGHJKR21}
{\sc A.~Aldroubi, K.~Gr{\"o}chenig, L.~Huang, P.~Jaming, I.~Krishtal, and J.~L.
  Romero}, {\em Sampling the flow of a bandlimited function}, The Journal of
  Geometric Analysis, 31 (2021), pp.~9241--9275.

\bibitem{AHKK23}
{\sc A.~Aldroubi, L.~Huang, K.~Kornelson, and I.~Krishtal}, {\em Predictive
  algorithms in dynamical sampling for burst-like forcing terms}, Applied and
  Computational Harmonic Analysis, 65 (2023), pp.~322--347.

\bibitem{AHKLLV18}
{\sc A.~Aldroubi, L.~Huang, I.~Krishtal, A.~Ledeczi, R.~R. Lederman, and
  P.~Volgyesi}, {\em Dynamical sampling with additive random noise}, Sampl.
  Theory Signal Image Process., 17 (2018), pp.~153--182.

\bibitem{AK16}
{\sc A.~Aldroubi and I.~Krishtal}, {\em Krylov subspace methods in dynamical
  sampling}, Sampl. Theory Signal Image Process., 15 (2016), pp.~9--20.

\bibitem{AP17}
{\sc A.~Aldroubi and A.~Petrosyan}, {\em Dynamical sampling and systems from
  iterative actions of operators}, in Frames and other bases in abstract and
  function spaces, Appl. Numer. Harmon. Anal., Birkh\"auser/Springer, Cham,
  2017, pp.~15--26.

\bibitem{RBD21}
{\sc R.~Alexandru, T.~Blu, and P.~L. Dragotti}, {\em Diffusion {SLAM}:
  localizing diffusion sources from samples taken by location-unaware mobile
  sensors}, IEEE Trans. Signal Process., 69 (2021), pp.~5539--5554.

\bibitem{AD20}
{\sc R.~Alexandru and P.~L. Dragotti}, {\em Reconstructing classes of
  non-bandlimited signals from time encoded information}, IEEE Trans. Signal
  Process., 68 (2020), pp.~747--763.

\bibitem{BH23}
{\sc R.~Beinert and M.~Hasannasab}, {\em Phase retrieval and system
  identification in dynamical sampling via {P}rony's method}, Adv. Comput.
  Math., 49 (2023), p.~Paper No. 56.

\bibitem{BK23}
{\sc F.~Bozkurt and K.~Kornelson}, {\em Norm retrieval from few spatio-temporal
  samples}, J. Math. Anal. Appl., 519 (2023), pp.~Paper No. 126804, 17.

\bibitem{CMPP20}
{\sc C.~Cabrelli, U.~Molter, V.~Paternostro, and F.~Philipp}, {\em Dynamical
  sampling on finite index sets}, J. Anal. Math., 140 (2020), pp.~637--667.

\bibitem{chaparro}
{\sc M.~Chaparro, C.~Gogorza, A.~Lavat, S.~Pazos, and A.~Sinito}, {\em
  Preliminary results of magnetic characterisation of different soils in tandil
  region (argentina) affected by the pollution of metallurgical factory}, Eur.
  J. Environ. Eng. Geophys, 7 (2002), p.~35.

\bibitem{CJS15}
{\sc C.~Cheng, Y.~Jiang, and Q.~Sun}, {\em Spatially distributed sampling and
  reconstruction}, CoRR, abs/1511.08541 (2015).

\bibitem{CH19}
{\sc O.~Christensen and M.~Hasannasab}, {\em Frame properties of systems
  arising via iterated actions of operators}, Appl. Comput. Harmon. Anal., 46
  (2019), pp.~664--673.

\bibitem{CH23}
\leavevmode\vrule height 2pt depth -1.6pt width 23pt, {\em Frames and
  generalized operator orbits}, Sampling Theory, Signal Processing, and Data
  Analysis, 21 (2023), p.~22.

\bibitem{DMM21}
{\sc R.~D\'{\i}az~Mart\'{\i}n, I.~Medri, and U.~Molter}, {\em Continuous and
  discrete dynamical sampling}, J. Math. Anal. Appl., 499 (2021), pp.~Paper No.
  125060, 19.

\bibitem{FS19}
{\sc D.~Freeman and D.~Speegle}, {\em The discretization problem for continuous
  frames}, Adv. Math., 345 (2019), pp.~784--813.

\bibitem{GRUV15}
{\sc K.~Gr\"{o}chenig, J.~L. Romero, J.~Unnikrishnan, and M.~Vetterli}, {\em On
  minimal trajectories for mobile sampling of bandlimited fields}, Appl.
  Comput. Harmon. Anal., 39 (2015), pp.~487--510.

\bibitem{higham2008functions}
{\sc N.~J. Higham}, {\em Functions of matrices: theory and computation}, SIAM,
  2008.

\bibitem{Hoffman}
{\sc K.~Hoffman and R.~Kunze}, {\em Linear algebra}, Prentice-Hall, Inc.,
  Englewood Cliffs, N.J., second~ed., 1971.

\bibitem{KS19}
{\sc Z.~A. Kasumov and A.~S. Shukurov}, {\em On frame properties of iterates of
  a multiplication operator}, Results Math., 74 (2019), pp.~Paper No. 84, 8.

\bibitem{MMM21}
{\sc R.~D. Mart\'{\i}n, I.~Medri, and U.~Molter}, {\em Dynamical sampling: a
  view from control theory}, in Excursions in harmonic analysis. {V}ol. 6,
  Appl. Numer. Harmon. Anal., Birkh\"{a}user/Springer, Cham, [2021] \copyright
  2021, pp.~269--295.

\bibitem{Men22}
{\sc T.~Mengestie}, {\em Closed range weighted composition operators and
  dynamical sampling}, J. Math. Anal. Appl., 515 (2022), pp.~Paper No. 126387,
  11.

\bibitem{MT23}
\leavevmode\vrule height 2pt depth -1.6pt width 23pt, {\em Closed range
  {V}olterra-type integral operators and dynamical sampling}, Monatsh. Math.,
  202 (2023), pp.~161--170.

\bibitem{MBD15}
{\sc J.~Murray-Bruce and P.~L. Dragotti}, {\em Estimating localized sources of
  diffusion fields using spatiotemporal sensor measurements}, IEEE Transactions
  on Signal Processing, 63 (2015), pp.~3018--3031.

\bibitem{MD17}
{\sc J.~Murray-Bruce and P.~L. Dragotti}, {\em A sampling framework for solving
  physics-driven inverse source problems}, IEEE Trans. Signal Process., 65
  (2017), pp.~6365--6380.

\bibitem{RDCV12}
{\sc J.~Ranieri, I.~Dokmani{\'c}, A.~Chebira, and M.~Vetterli}, {\em Sampling
  and reconstruction of time-varying atmospheric emissions}, in 2012 IEEE
  International Conference on Acoustics, Speech and Signal Processing (ICASSP),
  2012, pp.~3673--3676.

\bibitem{Tan17}
{\sc S.~Tang}, {\em System identification in dynamical sampling}, Adv. Comput.
  Math., 43 (2017), pp.~555--580.

\bibitem{UZ21}
{\sc A.~Ulanovskii and I.~Zlotnikov}, {\em Reconstruction of bandlimited
  functions from space-time samples}, J. Funct. Anal., 280 (2021), pp.~Paper
  No. 108962, 14.

\bibitem{ZLL17}
{\sc Q.~Zhang, B.~Liu, and R.~Li}, {\em Dynamical sampling in multiply
  generated shift-invariant spaces}, Applicable Analysis, 96 (2017),
  pp.~760--770.

\end{thebibliography}
\end{document}